\documentclass[11pt]{amsart}

\usepackage{geometry}
\usepackage{amsmath}
\usepackage{amssymb}
\usepackage{dsfont}
\usepackage{caption}
\usepackage{subcaption}
\usepackage{amsthm}
\usepackage[hidelinks]{hyperref}
\usepackage{thmtools}
\usepackage[capitalise]{cleveref}
\usepackage[nocompress]{cite}
\usepackage{mathtools}
\usepackage{tikz}
\usepackage{circuitikz}
\usepackage[british]{babel}
\usepackage{enumerate}
\tikzset{vtx/.style={inner sep=1.7pt, outer sep=0pt, circle, fill,draw}}
\usepackage{graphicx}
\usepackage{float}
\usepackage[shortlabels]{enumitem}
\usepackage{etoolbox}

\title{Fractional Clique Decompositions in Random Hypergraphs}
\author{Felix Joos}
\author{Zak Smith}
\thanks{The research leading to these results was partially supported by the Deutsche Forschungsgemeinschaft (DFG, German Research Foundation) -- 428212407.}

\theoremstyle{definition}
\newtheorem{definition}{Definition}[section]
\theoremstyle{plain}
\newtheorem{theorem}[definition]{Theorem}

\newtheorem{lemma}[definition]{Lemma}

\newtheorem{claim}{Claim}
\newtheorem{fact}[definition]{Fact}

\newtheorem{conjecture}[definition]{Conjecture}

\newtoggle{inclaimproof}
\togglefalse{inclaimproof}
\newcounter{proofcounter} % Track top-level proofs

\AtBeginEnvironment{proof}{%
  \iftoggle{inclaimproof}{}{%
    \stepcounter{proofcounter}%
    \setcounter{claim}{0}%
  }%
}

\newenvironment{claimproof}{%
  \toggletrue{inclaimproof}% Set the toggle to true before calling proof
  \let\origqed=\qedsymbol%
  \renewcommand{\qedsymbol}{$\blacksquare$}%
  \begin{proof}[Proof of claim]%
}{%
  \end{proof}%
  \let\qedsymbol=\origqed%
  \togglefalse{inclaimproof}% Reset the toggle upon exit
}

\Crefname{fact}{Fact}{Facts}
\Crefname{claim}{Claim}{Claims}

\numberwithin{equation}{section}

\newcommand{\eps}{\varepsilon}

\renewcommand{\restriction}{\mathord{\upharpoonright}}
\newcommand{\norm}[1]{\| #1 \|}

\graphicspath{ {./Figures/} }

\def\COMMENT#1{}

\begin{document}

\begin{abstract}
    We prove that, whenever $ p \ge n^{-1/2 + o(1)} $, with high probability $ G(n, p) $ admits a fractional triangle decomposition, that is, a non-negative weight function on its triangles for which the total weight of all triangles containing each edge is equal to 1.
    This bound on $ p $ is optimal up to the asymptotic error term, improving upon the recent state of the art, due to Mahabaduge and Simkin, that $ p \ge n^{-4/11 + o(1)} $ suffices.

    Our main tool is a deterministic theorem guaranteeing the existence of fractional clique decompositions in all hypergraphs satisfying suitable \emph{clique-regularity} properties.
    We prove this by analysing an extension (and generalisation to hypergraphs) of an algorithm proposed by Mahabaduge and Simkin, in which, at each time step, the discrepancy at each edge is spread among its containing triangles.

    By showing the concentration of the relevant quantities in random $ k $-uniform hypergraphs, we obtain for all $ k \ge 2 $ and $ r \ge k + 1 $ that w.h.p.\ $ G^{(k)}(n, p) $ admits a fractional $ K^{(k)}_r $-decomposition whenever $ p \ge n^{-\frac{r - k}{\binom{r}{k} - 1} + o(1)} $, which improves upon results of Delcourt, Kelly, and Postle, and is best possible up to subpolynomial factors.
\end{abstract}

\maketitle

\section{Introduction} \label{section_intro}

Many classic questions in combinatorics ask under what conditions it is possible to decompose the edges of a graph $ G $ into copies of some natural structure.
For example, in a \emph{triangle decomposition}, or more generally a \emph{$ K_r $-decomposition} for $ r \ge 3 $, we seek a collection of triangles (respectively copies of $ K_r $) in $ G $ such that every edge is contained in exactly one triangle (respectively copy of $ K_r $).
When $ G = K_n $, a triangle decomposition is known as a \emph{Steiner triple system}, the existence of which was proven for all $ n $ satisfying the obviously necessary divisibility conditions by Kirkman~\cite{kirkman} in 1847.
This laid the foundations for the field of design theory, which has seen huge progress in recent years~\cite{delcourt2024cliquedecompositionsrandomgraphs,delcourt2024thresholdsnq2steinersystemsrefined,delcourt2024proofhighgirthexistence,glock2020existencedesignsiterativeabsorption,keevash2024shortproofexistencedesigns,postle2025refinedabsorptionnewproof}, especially following the proof of the existence of so-called combinatorial designs in general by Keevash~\cite{keevash2024existencedesigns} in 2014, which generalised the work of Kirkman, as well as that of Wilson~\cite{Wilson1972PBDI,Wilson1972PBDII,Wilson1975PBDIII} in the 1970s.

In general, even the problem of determining whether $ G $ admits a triangle decomposition is NP-hard, so it is natural to ask whether certain density or pseudorandomness conditions on $ G $ are sufficient to guarantee the existence of clique decompositions.
In the dense case, a major line of research in recent years has sought to find minimum degree thresholds above which all graphs satisfying the obviously necessary divisibility conditions admit $ K_r $-decompositions.
In fact, much of this research~\cite{Dukes2012RationalDecomposition,drossfractional,BarberEtAl2017FractionalCliques,Montgomery2019FractionalCliques,DukesHorsley2020TriangleDecomposition,delcourtprogress,DelcourtLesgourguesPostle2026FractionalCliques} has focussed on obtaining the minimum degree threshold for a graph to admit a \emph{fractional triangle decomposition}, or more generally a \emph{fractional $ K_r $-decomposition}, following pivotal work of Barber, Kühn, Lo, and Osthus~\cite{BARBER2016337} from 2016 which uses their iterative absorption method to turn a fractional decomposition into an integral one, building upon a result of Haxell and Rödl~\cite{HaxellRodl2001IntegerFractionalPackings} from 2001.
A \emph{fractional $ K_r $-decomposition} is a non-negative weight function $ \varphi $ on the copies of $ K_r $ in $ G $ with the property that, for each edge $ e \in E(G) $, the sum of the weights of cliques containing $ e $ is exactly 1.
Observe that a $ K_r $-decomposition is exactly a fractional $ K_r $-decomposition whose image is a subset of $ \{ 0, 1 \} $, so seeking the fractional variant is a natural relaxation of the integral problem, as is common for many objects in combinatorics.
Proving the existence of fractional triangle decompositions in sufficiently dense graphs was also a central component of the very recent breakthrough by Delcourt~and~Postle~\cite{delcourt2026proofnashwilliamsconjecture} confirming Nash-Williams' famous conjecture from 1970, which asserts that every graph on $ n $ vertices (for $ n $ large enough) with minimum degree at least $ \frac{3}{4} n $ and satisfying the obvious divisibility conditions has a triangle decomposition.

In the sparse counterpart, it has also been asked~\cite{delcourt2024cliquedecompositionsrandomgraphs,yuster2007} whether, given $ r \ge 3 $, there is a threshold probability above which the binomial random graph $ G(n, p) $ admits a $ K_r $-decomposition \emph{with high probability (w.h.p.)}, that is, with probability tending to $ 1 $ as $ n \to \infty $.
Since w.h.p.\ the random graph $ G(n, p) $ does not satisfy the required divisibility conditions, we cannot expect there to exist a threshold for the property of containing an exact $ K_r $-decomposition, and must instead look for a weaker property, for example by excluding a \emph{leftover} of at most linearly many edges from our decomposition~\cite{delcourt2024cliquedecompositionsrandomgraphs}, or indeed by seeking a fractional decomposition.
The specific problem of finding a sharp threshold for a fractional triangle decomposition was posed by Yuster~\cite{yuster2007} in 2007.
Even the existence of a threshold is non-trivial, since admitting a fractional clique decomposition is a non-monotone property.
Observe that an obvious necessary condition for the existence of a fractional (or integral) $ K_r $-decomposition is that every edge is contained in a copy of $ K_r $, for which it is easy to show that the threshold in $ G(n, p) $ is $ \Theta(n^{-\frac{2}{r+1}} \log^{1/(\binom{r}{2} - 1)}{n}) $.
It is widely believed that this condition should also be sufficient~\cite{delcourt2024cliquedecompositionsrandomgraphs,mahabaduge2025fractionaltriangledecompositionsrandom,yuster2007}, and Mahabaduge and Simkin~\cite{mahabaduge2025fractionaltriangledecompositionsrandom} explicitly conjectured the following sharp threshold in the triangle case, in response to the problem of Yuster~\cite{yuster2007}.

\begin{conjecture}[Mahabaduge and Simkin~\cite{mahabaduge2025fractionaltriangledecompositionsrandom}] \label{conj:triangle_threshold}
    For every $ \eps > 0 $ and $ p \ge (1 + \eps) \sqrt{\frac{3 \log{n}}{2n}} $, w.h.p.\ $ G(n, p) $ admits a fractional triangle decomposition.
\end{conjecture}

There has been some progress towards \cref{conj:triangle_threshold} but, until now, even obtaining the correct power of $ n $ was out of reach.
Recently, Delcourt, Kelly, and Postle~\cite{delcourt2024cliquedecompositionsrandomgraphs} used their refined absorption framework to prove the existence of clique decompositions (with a linear leftover) in $ G(n, p) $ for sufficiently large $ p $ (a polynomial factor above the conjectured threshold).
Their method also allowed them to show that $ G(n, p) $ admits a fractional triangle decomposition w.h.p.\ for $ p \ge n^{-1/3 + o(1)} $, and more generally a fractional $ K_r $-decomposition for $ p \ge n^{-\frac{1}{r+0.5} + o(1)} $.
Since the main focus of their work was finding integral decompositions, their approach is limited by the use of denser absorbing structures, which they remark should not inherently be necessary to obtain fractional decompositions. 
Very recently, Mahabaduge and Simkin~\cite{mahabaduge2025fractionaltriangledecompositionsrandom} improved the required probability for a fractional triangle decomposition to $ p \ge n^{-4/11 + o(1)} $ by analysing an algorithm for redistributing edge weights in an approximate fractional decomposition using gadgets based on so-called `octagonal pinwheel' graphs.
Our main contribution is to make significant progress on Conjecture~\ref{conj:triangle_threshold}, and indeed its natural generalisation in two directions, showing that just a subpolynomial factor above the obviously necessary minimum probability is sufficient to ensure the existence of fractional clique decompositions in random $ k $-graphs.
We write $ G^{(k)}(n, p) $ for the random $ k $-uniform hypergraph (\emph{$ k $-graph}) on $ n $ vertices, where edges are included independently at random with probability $ p $, and write $ K^{(k)}_r $ for a $ k $-uniform clique on $ r $ vertices, or simply $ K_r $ if $ k $ is clear from context.
Our main result is the following.

\begin{theorem} \label{thm:gnp_fcd}
    For all integers $ k \ge 2 $ and $ r \ge k + 1 $, and any real $ \eps > 0 $, if $ p \ge n^{-\frac{r - k}{\binom{r}{k} - 1} + \eps} $, then w.h.p.\ $ G^{(k)}(n, p) $ admits a fractional $ K^{(k)}_r $-decomposition.
\end{theorem}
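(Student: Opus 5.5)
The approach follows the abstract: derive \cref{thm:gnp_fcd} from a deterministic statement about \emph{clique-regular} $k$-graphs, and then check that $G^{(k)}(n,p)$ satisfies its hypotheses w.h.p.

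The deterministic part runs as follows. Start from the uniform weighting $\varphi_0(K)=1/d$ on every copy $K$ of $K_r$, where $d=\Theta(n^{r-k}p^{\binom rk-1})$ is the typical number of $r$-cliques containing an edge. Let $\delta_t(e)=1-\sum_{K\ni e}\varphi_t(K)$ be the discrepancy at edge $e$ after $t$ steps. At each step, spread each discrepancy evenly over the cliques through its edge:
\[
\varphi_{t+1}(K)=\varphi_t(K)+\frac{1}{\binom rk d}\sum_{e\subseteq K}\delta_t(e).
\]
The discrepancy then evolves by $\delta_{t+1}=(I-M/(\binom rk d))\delta_t$, where $M$ is the edge--edge matrix counting cliques that contain two given edges. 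The diagonal of $M$ is about $d$. For a pair $e\neq f$, the entry $M_{ef}$ depends only on $|e\cap f|=j$ and is concentrated around $c_j n^{r-2k+j}p^{\binom rk-2}$. I would show the off-diagonal part acts as an averaging operator: on each class $j$, it essentially replaces $\delta(e)$ by the average of $\delta$ over edges meeting $e$ in $j$ vertices. The iteration then contracts the discrepancy geometrically, and this needs regularity of the relevant counts and of codegree-type averages, not exact spectral knowledge.

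I would then sum the geometric series of corrections to get a limit $\varphi$ with zero discrepancy. The final step is to check $\varphi\ge 0$. This holds if $\sum_t|\text{correction}_t|\le (1-o(1))/d$, i.e.\ if the initial relative discrepancy $\max_e|\delta_0(e)|$ is $n^{-\Omega(\eps)}$ and the contraction factor is bounded away from $1$.

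The probabilistic part is to verify the clique-regularity hypotheses in $G^{(k)}(n,p)$. These are: every edge lies in $(1\pm n^{-c})d$ copies of $K_r$; for $0\le j<k$, the number of cliques through two edges sharing $j$ vertices is concentrated in an averaged sense; and there are analogous bounds for short ``walks'' of overlapping cliques that are needed to control higher iterates. Per-edge concentration of $K_r$-extension counts follows from Kim--Vu or Janson-type inequalities (or Spencer/Vu's results on extension counts), provided the expected count of every sub-extension is $n^{\Omega(\eps)}$. With $p=n^{-\frac{r-k}{\binom rk-1}+\eps}$, the extension of an edge $e$ to $K_r$ has expectation $n^{(\binom rk-1)\eps}$. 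The relevant density $\frac{r-k}{\binom rk-1}$ must be the maximal one over sub-extensions, i.e.\ the extension must be balanced. This holds since adding $s$ vertices to $e$ adds $\binom{k+s}{k}-1$ edges, and the ratio $s/(\binom{k+s}k-1)$ is decreasing in $s$. A union bound over the $O(n^k)$ edges then finishes this part.

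The main obstacle is that individual pair counts $M_{ef}$ are \emph{not} concentrated when $p$ is near the threshold: for $j$ small they can have expectation below $1$. So the contraction argument cannot use entrywise regularity. Instead, it must work with $t$-step walk counts $(M^t)_{ef}$, or with quantities averaged over neighbourhoods, which do have polynomially large expectations. I would first try to show that for some constant $T$ depending on $r,k$, the $T$-step transition kernel is close to its expectation in $\ell^\infty\to\ell^\infty$ norm, up to a factor $1\pm n^{-c}$. This uses concentration of the corresponding rooted clique-walk extension counts, with each extension checked to be balanced, which is where $\eps>0$ is consumed. I would then run the contraction in blocks of $T$ steps. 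Handling rare edges with atypical neighbourhoods, if they arise, is the delicate point. I expect either a union bound shows none exist, or the deterministic theorem must allow a small exceptional set absorbed by local adjustments.
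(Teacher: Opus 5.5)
Your architecture matches the paper's: a deterministic statement for clique-regular $k$-graphs, uniform starting weights, geometric decay of discrepancies, non-negativity from $\norm{\xi[\varphi_0]}=n^{-\Omega(\eps)}$, and Kim--Vu for the random part. But the contraction step, which is the entire difficulty, is asserted rather than proved, and the route you propose for it fails. You claim that the $T$-step kernel $K_T$ of the pure edge-spreading process is within $1\pm n^{-c}$ of a kernel depending only on $|e\cap f|$. This is false for every constant $T$.

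Already for $k=2$, $r=3$ and $e=uv$, consider walks that sit at $e$, make one move inside the link of $v$ to $vx$ (with $uvx$ a triangle), and then move to $xy$ (with $vxy$ a triangle, $y\neq u$). Their total absolute weight is bounded below by a positive constant depending only on $T$, not on $n$. All of it lands on $O(d^2)$ edges disjoint from $e$, which is a vanishing fraction of all such edges. Other walk families put only $o(1)$ weight on these edges, so no kernel constant on intersection classes with bounded row sums can match this row in $\ell^\infty\to\ell^\infty$ norm. For a fixed pair $(e,f)$, this walk shape occurs only if $ux,vx,vy\in E(G)$, so its rooted count is zero for almost all such pairs and is not concentrated at all. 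This happens in every row. It is therefore not a matter of rare atypical edges, and neither a union bound nor an exceptional set absorbed by local adjustments can remove it. Walks pivoting around any $b$-subset of $e$ with $b\in[k-1]$, or linking $e\setminus f$ to $f\setminus e$ within bounded distance, behave the same way; these are Problems 1--3 in \cref{sect:proof_sketch}.

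Second, even on the concentrated part, global balance alone gives no cancellation. Replacing the kernel by its class averages and using $\sum_f\delta(f)=0$ leaves terms proportional to $\sum_{f\supseteq U}\delta(f)=\xi^{(b)}(U)$ for $U\subseteq e$ with $1\le |U|=b<k$. Nothing in your argument controls these.

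The paper avoids both problems by changing the process.
\begin{enumerate}[(a)]
\item It interleaves lower-order distributors, $\mathcal{P}_{s,\ell}=(\mathcal{R}_s^{\ell}\mathcal{P}_{s-1,\ell})^{\ell}$. It then proves by induction on $j$ (\cref{lem:norm_decrease}) that a $\mathcal{P}_{j-1,\ell}$ step shrinks every $\xi^{(b)}$ with $b<j$ to an $O(\eps)$-fraction of its natural size $\frac{d_b}{d_j}\norm{\xi^{(j)}}$.
\item The Ramsey-type \cref{lem:ramsey_full} locates, in every clique-path produced by $(\mathcal{R}_j^{\ell}\mathcal{P}_{j-1,\ell})^{\ell}$, a $(b,c)$-centred subpath between block boundaries.
\item Concentration \ref{cond:creg3} is demanded only for centred paths, where the density bound \cref{lem:clique_path_density} makes Kim--Vu applicable.
\end{enumerate}
On the centred subpath, $\Xi$ is, up to an $\eps$-error, a constant times $\mathds{1}[J\subseteq\iota(S\cap T,S)]$ for the appropriate $J$. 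Applied to the $j$-discrepancy of a $K_r$-function that has just gone through a $\mathcal{P}_{j-1,\ell}$ step, it therefore returns a multiple of that function's $b$-discrepancy, which the induction has already made small. The uncontrolled initial segment of the path costs only a bounded factor.

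Without a device of this kind, or a genuinely different analysis of all these local walk families in every link, your contraction step remains unproved.
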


In particular, in the case $ k = 2 $ and $ r = 3 $, we obtain a fractional triangle decomposition whenever $ p \ge n^{-1/2 + o(1)} $, closing the polynomial gap in~\cite{mahabaduge2025fractionaltriangledecompositionsrandom}.
To prove \cref{thm:gnp_fcd}, we first state and prove a deterministic result (see \cref{thm:proc_conv_fcd}) guaranteeing the existence of fractional clique decompositions in a class of hypergraphs satisfying a set of pseudorandomness properties, which we refer to as \emph{clique-regularity}, and then prove that $ G^{(k)}(n, p) $ is clique-regular w.h.p.\ (see \cref{lem:gnp_reg}).
We prove \cref{thm:proc_conv_fcd} by analysing a modification of (the natural generalisation of) a simple weight-distributing algorithm proposed at the end of \cite{mahabaduge2025fractionaltriangledecompositionsrandom}.
Roughly speaking, the idea is as follows.
We start with a uniform weight function on the cliques, which yields an approximate fractional decomposition, since every edge is contained in approximately the same number of cliques.
The process proposed in~\cite{mahabaduge2025fractionaltriangledecompositionsrandom} then iteratively adds or subtracts weight from each edge $ e $ with too little or too much weight, respectively, by adding or subtracting weight uniformly from all cliques containing $ e $; this can be thought of as spreading the \emph{discrepancy} at $ e $ uniformly among its containing cliques.
In fact, we need a slight modification of this naïve algorithm to avoid too much weight being concentrated within the set of edges containing particular vertices.
Specifically, in alternate steps, we instead spread the discrepancy at each vertex uniformly among its containing cliques, where the discrepancy is defined relative to the expected total weight required at the vertex in a fractional clique decomposition.
In hypergraphs, we must further perform an analogous discrepancy-spreading step for sets of $ j $ vertices for each $ 1 \le j \le k $.

Our approach has two main advantages in comparison to the gadgets used by Mahabaduge and Simkin~\cite{mahabaduge2025fractionaltriangledecompositionsrandom}.
Firstly, close to the conjectured threshold, not every edge is contained in a copy of any pinwheel gadget of a constant size; even to obtain a bound $ p \ge n^{-1/2 + o(1)} $, such gadgets would have to be made arbitrarily large, which appears to make the analysis very complicated.
Secondly, such gadgets are tailored to fractional triangle decompositions, and do not have an obvious analogue for larger cliques or hypergraphs.
On the other hand, our approach relies only on the cliques themselves, which exist even at the conjectured sharp threshold.

We proceed to define our algorithm formally in the rest of this section, finishing with the statement of \cref{thm:proc_conv_fcd}.
This is followed by an overview of the proof in \cref{sect:proof_sketch}.
In \cref{section:weightings} we make various further definitions, which allow us to state the required clique-regularity properties, completing the formal statement of \cref{thm:proc_conv_fcd}.
We then use these concepts in \cref{sect:main_proof} to prove \cref{thm:proc_conv_fcd}.
Finally, in \cref{sect:hypergraph_psrand}, we complete the proof of \cref{thm:gnp_fcd}, by showing that w.h.p.\ the random hypergraph satisfies the required properties.

In the rest of this paper, we consider $ k \ge 2 $ and $ r \ge k + 1 $ to be global constants, and consider fractional $ K_r^{(k)} $-decompositions of $ k $-graphs.
Throughout the rest of \cref{section_intro,section:weightings,sect:main_proof}, let $ G $ be a $ k $-graph on vertex set $ V(G) \coloneqq [n] $ equipped with the canonical ordering such that every set $ S \subseteq V(G) $ with $ S \in E(G) $ or $ |S| < k $ is contained in some copy of $ K_r^{(k)} $ in $ G $.

\subsection{The algorithm} \label{section:process}

Given $ a, b \in \mathbb{Z} $, write $ [a, b] \coloneqq \{ x \in \mathbb{Z}: a \le x \le b \} $ and $ [a] \coloneqq [1, a] $.
Write $ N_G^{\mathrm{e}}(S) \coloneqq \{ e \in E(G): S \subseteq e \} $ for the edge neighbourhood of any set $ S \subseteq V(G) $ with size $ 0 \le |S| \le k $, as well as $ K_{r, G}(A) $ for the set of copies of $ K_r $ in $ G $ whose vertex set contains $ A \subseteq V(G) $, or just $ K_r(A) $ if $ G $ is clear from context, and $ K_r(G) \coloneqq K_{r, G}(\emptyset) $.
Say that $ \varphi: K_r(G) \to \mathbb{R} $ is a \emph{$ K_r $-function (on $ G $)} and write $ \varphi(e) \coloneqq \sum_{K \in K_r(e)} \varphi(K) $ for $ e \in E(G) $.
Define the \emph{discrepancy} $ \xi \coloneqq \xi[\varphi] \coloneqq \xi^{(k)}[\varphi]: E(G) \to \mathbb{R} $ by
$$ \xi(e) \coloneqq \varphi(e) - 1 $$
and further the \emph{$ j $-discrepancy} $ \xi^{(j)} \coloneqq \xi^{(j)}[\varphi]: \binom{V(G)}{j} \to \mathbb{R} $ by
$$ \xi^{(j)}(S) \coloneqq \sum_{e \in N_G^{\mathrm{e}}(S)} \xi(e) $$
for each $ S \subseteq V(G) $ with $ |S| = j \in [0, k - 1] $.
This represents (up to a constant factor) the difference between the amount of weight $ \varphi $ assigns to cliques containing a particular $ j $-set $ S $ and the amount of weight it should receive in a fractional $ K_r $-decomposition.
Given $ j \in [0, k] $, define also
$$ E_j \coloneqq E_j(G) \coloneqq \begin{cases} E(G), & j = k;\\ \binom{V(G)}{j}, & j \in [0, k - 1]. \end{cases} $$

In the following, we define a number of operators on the space of $ K_r $-functions.
Our ultimate goal is to start with some suitable $ \varphi $, which approximates a fractional $ K_r $-decomposition, and iteratively map it to a sequence of functions, whose limit is an (exact) fractional $ K_r $-decomposition.
Given $ s \in [k] $, define the \emph{one-step $ s $-distributor} $ \mathcal{R}_s: \mathbb{R}^{K_r(G)} \to \mathbb{R}^{K_r(G)} $ by setting, for any $ K_r $-function $ \varphi $ on $ G $ and $ K \in K_r(G) $,
$$ \mathcal{R}_s[\varphi](K) \coloneqq \varphi(K) - \frac{1}{\binom{r - s}{k - s}} \sum_{S \in \binom{V(K)}{s}} \frac{\xi^{(s)}(S)}{|K_r(S)|}. $$
Throughout the paper, we write $ \mathcal{R}_s \varphi \coloneqq \mathcal{R}_s [\varphi] $ for ease of notation.
To explain the normalising factor, note that, given $ S \in \binom{V(G)}{s} $ and $ K \in K_r(S) $, there are $ \binom{r - s}{k - s} $ edges in $ K $ containing $ S $; in particular, the sum $ \xi^{(s)}(S) $ contains $ \binom{r - s}{k - s} $ terms $ \xi(e) $ with $ e \in E(K) $.
Intuitively, for each $ s $-set $ S $ (or edge in the case $ s = k $), we think of $ \mathcal{R}_s $ as subtracting the discrepancy of $ S $ from every $ r $-clique containing it, divided by the total number of such cliques (taking multiplicities into account).
In this way, the discrepancy at $ S $ is spread among all $ s $-sets (or edges for $ s = k $) which share a clique with $ S $.
Given $ \ell \in \mathbb{N} $, define the \emph{$ (s, \ell) $-distributor} $ \mathcal{P}_{s, \ell} $ for each $ s \in [k] $ inductively, by writing $ \mathcal{P}_{0, \ell}: \mathbb{R}^{K_r(G)} \to \mathbb{R}^{K_r(G)} $ for the identity operator and setting
$$ \mathcal{P}_{s, \ell} \coloneqq (\mathcal{R}_s^{\ell} \mathcal{P}_{s - 1, \ell})^{\ell} \text{ for each } s \in [k]. $$
For reasons which will become clear later (see \cref{sect:proof_sketch}), in order to distribute the edge discrepancies, we first want $ \xi^{(j)} $ to be small for all $ j \in [k - 1] $.
This motivates our iterative definition, in which $ \mathcal{P}_{j, \ell} $ is used to reduce $ \xi^{(j)} $ for each $ j \in [k] $, one at a time.

% To prove \cref{thm:gnp_fcd}, we prove more generally the deterministic existence of fractional clique decompositions in a class of hypergraphs satisfying a set of pseudorandomness properties (see \cref{thm:proc_conv_fcd}), which $ G^{(k)}(n, p) $ satisfies w.h.p.\ (see \cref{lem:gnp_reg}).
We now proceed to state our key deterministic theorem, for which we require some further definitions.
Say that a $ K_r $-function $ \varphi $ is \emph{globally-balanced} if it has the same total weight as a fractional clique decomposition, that is, $ \sum_{K \in K_r(G)} \varphi(K) = \binom{r}{k}^{-1}|E(G)| $, and note that this implies that $ \sum_{e \in E(G)} \xi(e) = 0 $.
Further, say formally that $ \varphi $ is a \emph{fractional $ K_r $-decomposition} if $ \varphi $ is non-negative and $ \varphi(e) = 1 $ (equivalently, $ \xi(e) = 0 $) for every $ e \in E(G) $.
Say that $ \varphi $ is \emph{uniform} if it is a constant function, and observe that every $ k $-graph $ G $ containing a copy of $ K_r $ has a unique globally-balanced uniform $ K_r $-function; specifically, every clique receives weight $ \frac{1}{\gamma} $, writing $ \gamma $ for the average number of cliques containing each edge.
We write $ a \ll b $ to mean that, given any $ b > 0 $, there exists $ a_0 > 0 $ such that, for any $ 0 < a \le a_0 $, the subsequent statement holds; this extends in the obvious way to hierarchies with more variables.

\begin{theorem} \label{thm:proc_conv_fcd}
    Let $ k \ge 2, r \ge k + 1 $, and suppose $ 1/n, \eps \ll 1/C \ll 1/\ell \ll 1/c, 1/r $.
    Let $ G $ be an $ (\ell, c, C, \eps) $-clique-regular $ k $-graph on $ n $ vertices and $ \varphi $ be the globally-balanced uniform $ K_r $-function on $ G $.
    Then the sequence $ (\mathcal{P}_{k, \ell}^t \varphi)_{t \ge 0} $ converges to a fractional $ K_r $-decomposition of $ G $ as $ t \to \infty $.
\end{theorem}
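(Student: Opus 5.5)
The plan is to view everything in terms of the discrepancy vector $\xi=\xi[\varphi]\in\mathbb{R}^{E(G)}$ and to track how each operator acts on it. Since $\mathcal{R}_s$ is affine in $\varphi$, the map $\varphi\mapsto\xi[\mathcal{R}_s\varphi]$ is affine in $\xi$:
$$\xi[\mathcal{R}_s\varphi](e)=\xi(e)-\binom{r-s}{k-s}^{-1}\sum_{K\in K_r(e)}\sum_{S\in\binom{V(K)}{s}}\frac{\xi^{(s)}(S)}{|K_r(S)|}.$$
Writing $\xi^{(s)}=B_s\xi$, this reads $\xi\mapsto(I-M_s)\xi$, where $M_s=A_sB_s$ is a nonnegative linear operator. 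Two facts should be checked first. (i) For $s=k$, the operator $M_k$ is a weighted random-walk operator on edges: from $e$, pick a uniform clique containing $e$ and then a uniform edge of it. Indeed, for the row sums one has $\sum_{K\ni e}\binom rk^{-1}\cdot\binom rk/|K_r(e)|\cdot\ldots$, and up to symmetrisation by the weights $|K_r(e)|$ it is self-adjoint with spectrum in $[0,1]$. So $I-M_k$ contracts the component orthogonal to the kernel. (ii) For $s<k$, $M_s$ acts essentially as a projection-like operator onto the "$s$-set level" functions, so $(I-M_s)$ approximately kills $\xi^{(s)}$. I would then use the $(\ell,c,C,\eps)$-clique-regularity hypotheses, presumably bounding the counts $|K_r(S)|$ and codegree-type quantities up to factors $1\pm\eps$ and $C$, to show that $\xi^{(s)}$ shrinks by a constant factor after $\mathcal{R}_s^\ell$ while the lower-level discrepancies $\xi^{(j)}$, $j<s$, are not increased much.

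The core step is to show that $\mathcal{P}_{k,\ell}$ is a contraction on discrepancies in a suitable norm. Concretely, I aim for $\|\xi[\mathcal{P}_{k,\ell}\varphi]\|\le\frac12\|\xi[\varphi]\|$ for a weighted $\ell_\infty$- or $\ell_2$-type norm that combines the levels. Global balance ($\sum_e\xi(e)=0$) is preserved by every $\mathcal{R}_s$, since each removes exactly its discrepancy mass. This rules out the constant eigenvector, which is the trivial obstruction. I would prove the contraction by induction on $s$. Assuming $\xi^{(j)}$ is small for $j<s$, I compute $\xi^{(s)}$ after applying $\mathcal{R}_s^\ell$ by expanding $(I-M_s)^\ell$ and bounding walk counts of length up to $\ell$ through $r$-cliques using the regularity properties. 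The walks that return to sets sharing $j<s$ vertices with the start contribute the terms controlled by $\xi^{(j)}$; the remaining walks mix, with an error of order $\eps$ or $(1-c)^\ell$. The outer power $\ell$ in $\mathcal{P}_{s,\ell}=(\mathcal{R}_s^\ell\mathcal{P}_{s-1,\ell})^\ell$ absorbs the interaction between levels. I expect this step to be the main obstacle: the sparse edge walk is not an expander on its own, because weight piles up on edges through a common vertex. This is exactly why the vertex-level steps exist, and the induction must show that once $\xi^{(j)}$ is flat for all $j<k$, the $\ell$-step edge walk is close to uniform in $\ell_\infty$.

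Given the contraction, convergence follows routinely. The iterates $\varphi_t=\mathcal{P}_{k,\ell}^t\varphi$ satisfy $\|\xi_t\|\le2^{-t}\|\xi_0\|$, and each increment $\varphi_{t+1}-\varphi_t$ is a bounded linear image of $\xi_t$. Hence $\sum_t\|\varphi_{t+1}-\varphi_t\|_\infty<\infty$, so $\varphi_t$ converges to some $\varphi_\infty$ with $\xi[\varphi_\infty]=0$, as $\xi$ is continuous. For nonnegativity, note that $\varphi_0\equiv1/\gamma$ and that clique-regularity gives $\|\xi_0\|_\infty\le\eps'$ small. The total change at any clique $K$ is then at most $\sum_t O(\ell^{k}\,\|\xi_t\|_\infty\,\binom rs/\min_S|K_r(S)|)$, which is $O(\eps'/\gamma)$ by the regularity lower bounds $|K_r(S)|\gtrsim\gamma$. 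This is less than $1/\gamma$, so $\varphi_\infty$ is positive and is therefore a fractional $K_r$-decomposition.
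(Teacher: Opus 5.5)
Your last paragraph matches how the paper deduces the theorem from \cref{lem:norm_decrease}: global balance is preserved, $\norm{\varphi_{t+1}-\varphi_t}\le \frac{C}{\gamma_k}\norm{\xi[\varphi_t]}$, and $\norm{\xi[\varphi_0]}\le\eps$ by \ref{cond:creg2}. This needs the contraction in $\ell_\infty$. An $\ell_2$ contraction only gives $\sum_t\norm{\varphi_{t+1}-\varphi_t}_\infty\lesssim \eps\sqrt{|E(G)|}/\gamma_k$, which is useless for non-negativity.

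The genuine gap is the contraction itself, and the mechanism you propose for it fails. Your claim (i) is false: $\mathcal{R}_k$ carries no factor $\binom rk^{-1}$. The diagonal term cancels exactly, giving $\xi[\mathcal{R}_k\varphi](e)=-\sum_{K\in K_r(e)}\sum_{f\in E(K)\setminus\{e\}}\xi(f)/|K_r(f)|$. So your $M_k$ is essentially $\binom rk$ times the random walk you describe, with spectrum in $[0,\binom rk]$. More importantly, the same computation for $j<s$ gives
\[
\xi^{(j)}[\mathcal{R}_s\varphi](S)=\Bigl(1-\tbinom{r-j}{s-j}\Bigr)\xi^{(j)}[\varphi](S)-\frac{\binom{r-j}{k-j}}{\binom{r-s}{k-s}}\sum_{K\in K_r(S)}\sum_{U\in\binom{V(K)}{s},\,U\not\supseteq S}\frac{\xi^{(s)}[\varphi](U)}{|K_r(U)|},
\]
where $|1-\binom{r-j}{s-j}|\ge 1$, and $\ge 2$ unless $r=k+1$, $s=k$, $j=k-1$. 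So $\mathcal{R}_s^\ell$ does not leave lower-level discrepancies roughly unchanged; generically it amplifies them by a factor exponential in $\ell$. Likewise (ii) holds only for the part of $\xi^{(s)}$ not explained by lower levels.

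Consequently an induction with a fixed contraction factor such as $\frac12$ cannot close. At level $j$, whatever bound you have on $\xi^{(b)}$, $b<j$, is multiplied by an $\ell$-dependent constant. The paper instead proves that $\mathcal{P}_{j,\ell}$ shrinks $\xi^{(j)}$ by a factor $C_2\eps$, where $C_2$ depends on $\ell$ but $\eps\ll 1/C_2$. This is exactly what the hierarchy $\eps\ll 1/C\ll 1/\ell$ is for.

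Your assertion that "the remaining walks mix" also has no source. Bounds on clique counts and codegrees, i.e.\ your guess \ref{cond:creg1}--\ref{cond:creg2}, cannot yield it. For $k=2$, take a disjoint union of two graphs with the same typical degree and the same typical number of $K_r$'s per edge. Every $\mathcal{R}_s$ multiplies each component's discrepancy sum by $1-\binom rs$, so the process diverges whenever these sums are nonzero, despite global balance.

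The missing input is \ref{cond:creg3}: $\Xi[\mathcal{F}](S,T)=\alpha_{\mathcal F}\gamma_j/\gamma_b+\zeta_{\mathcal F}(S,T)$ with $\norm{\zeta_{\mathcal F}}\le\eps$. It holds only for $(b,c)$-centred clique-paths. Pivoting is not the only obstruction: paths whose unshared root vertices are close in $F$ also fail to concentrate, so a single block $\mathcal{R}_j^\ell$ cannot be analysed on its own.

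The paper proceeds in four steps:
\begin{enumerate}
\item It expands $\xi^{(j)}[\mathcal{P}_{j,\ell}\varphi]$ as a weighted sum over clique-paths (\cref{lem:process_graphs_char}).
\item It uses the Ramsey-type \cref{lem:ramsey_full} to find, in each path, a centred subpath spanning several blocks whose input is the output of a $\mathcal{P}_{j-1,\ell}$ block.
\item On that subpath, \ref{cond:creg3} and \cref{fact:func_intersect_bound} turn the action into $\alpha\,\xi^{(b)}$ plus an $O(\eps\norm{\xi^{(j)}})$ error.
\item The induction hypothesis makes $\xi^{(b)}$ itself $O(\eps)$ at scale $\frac{d_b}{d_j}\norm{\xi^{(j)}}$, and the prefix costs only a bounded factor.
\end{enumerate}
Without this decomposition and the centred-subpath selection, your core step remains unproved.
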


We defer the definition of $ (\ell, c, C, \eps) $-clique-regularity to \cref{section:weightings}.
This is a strong notion of pseudorandomness, which in particular is exhibited w.h.p.\ by the binomial random hypergraph (for the appropriate range of $ p $), but not necessarily by pseudorandom graphs as usually considered in the literature.
We begin now by giving a rough sketch of our proof in \cref{sect:proof_sketch}.

\subsection*{Notation}
Recall that we fix $ k \ge 2 $ and $ r \ge k + 1 $ for the remainder of this paper.
The $ k $-graph $ G $ always has vertex set $ V(G) = [n] $ with the usual ordering, unless otherwise specified.

\section{Proof overview} \label{sect:proof_sketch}

Here we attempt to give some intuition for the key ideas involved in our proof.
We do this by first presenting the naïve (and not entirely correct) idea behind the proof, then explaining how each component of the final proof arises as the solution to a problem encountered in this approach.
For intuition, we work in a random hypergraph $ G^{(k)}(n, p) $ as in \cref{thm:gnp_fcd}, although \cref{thm:proc_conv_fcd} is more general.

\textbf{The idea:}
We start with the unique globally-balanced uniform $ K_r $-function $ \varphi_0 $, which is already an approximate decomposition because, recalling that we write $ \gamma $ for the average number of cliques containing an edge, it is easy to see that $ |\xi(e)| = \left| \frac{|K_r(e)|}{\gamma} - 1 \right| \le n^{-\delta} $ in a random hypergraph $ G $, for some small $ \delta > 0 $.
We now generate a sequence of $ K_r $-functions by setting $ \varphi_{t + 1} \coloneqq \mathcal{R}_k \varphi_t $ for each $ t \ge 0 $, and aim firstly to prove that they converge to a decomposition.
Indeed, we may rewrite the discrepancy $ \xi[\mathcal{R}_{k} \varphi_t](e) $ as a sum of discrepancies $ \xi[\varphi_t](f) $ of edges $ f $ for which there is a clique in $ G $ containing $ e \cup f $, weighted according to $ \frac{1}{|K_r(f)|} $; since all such clique counts are highly concentrated, we may treat these weights as a constant multiplier, which we ignore for the remainder of this summary.
In particular, iterating this $ \ell $ times, we see that $ \xi[\mathcal{R}_{k}^{\ell} \varphi_t](e) $ is a (weighted) sum of discrepancies among edges $ f $ for which there exists a \emph{clique-path} of length $ \ell $ from $ e $ to $ f $, that is, a sequence of cliques $ F_1, \ldots, F_{\ell} $ in $ G $ for which the first contains $ e $, the last contains $ f $, and adjacent cliques $ F_i, F_{i + 1} $ intersect in an edge $ S_i $.
Our first hope is that, for sufficiently large (constant) $ \ell $, the number of such clique-paths should be very well-concentrated for all $ e, f \in E(G) $, so we may write $ \xi[\varphi_{t + \ell}](e) = \sum_{f \in E(G)} (1 \pm n^{-\delta}) w \xi[\varphi_t](f) $ for some constant $ w \in \mathbb{R} $.
Since each $ \varphi_t $ is globally-balanced, the sum over all discrepancies is zero, and we would just be left with the error term, which would be sufficient to obtain $ \norm{\xi[\varphi_{t + \ell}]}_{\infty} \le n^{-\delta} \norm{\xi[\varphi_t]}_{\infty} $.
As such, the process would indeed converge to a $ K_r $-function $ \varphi $ with zero discrepancy, and since $ \varphi_0 $ already has very small discrepancies, it is not hard to further show that the process yields non-negative clique weights, which means that $ \varphi $ would be a decomposition.

\textbf{Problem 1:} Such a strong concentration on the number of clique-paths is too much to hope for, because too many of the clique-paths are \emph{$ b $-pivoting} for some $ b \in [k - 1] $, that is, $ |e \cap S_1 \cap \ldots \cap S_{\ell - 1} \cap f| = b $; thus the number of clique-paths from $ e $ to $ f $ is, for example, disproportionately large for edges $ f $ with $ e \cap f \ne \emptyset $.

\textbf{Solution:} Instead of always applying $ \mathcal{R}_k $, we define instead $ \varphi_{t + 1} \coloneqq \mathcal{R}_k^{\ell} \mathcal{P}_{k - 1, \ell} \varphi_t $ and adopt an inductive approach, assuming for now (the induction hypothesis) that the operator $ \mathcal{P}_{k - 1, \ell} $ significantly reduces the $ (k - 1) $-discrepancy, that is $ \norm{\xi^{(s)}[\mathcal{P}_{k - 1, \ell} \varphi]}_{\infty} \le n^{-\delta} \norm{\xi^{(s)}[\varphi]}_{\infty} $ for every $ s \in [k - 1] $.
We may partition the set of clique-paths of length $ \ell $ into subsets $ \mathcal{F}_b $ depending on the number $ b $ of vertices around which they pivot.
For $ b \in [k - 1] $ and $ \ell $ sufficiently large, we may hope to obtain concentration on the number of clique-paths in $ \mathcal{F}_b $ for all edges $ e, f $ with $ |e \cap f| \ge b$; in other words (again, roughly speaking), we hope to write
$$ \xi[\mathcal{R}_k^{\ell} \varphi](e) = \sum_{b \in [0, k - 1]} C_b \sum_{U \in \binom{e}{b}} \sum_{f \in E(G): U \subseteq f } (1 \pm n^{-\delta}) \xi[\varphi](f) $$
for some multiplicative factors $ C_b $.
Up to an error term, we could then rewrite the discrepancy in terms of $ b $-discrepancies for $ b \in [0, k - 1] $, and thus obtain that
$$ \norm{\xi[\varphi_{t + 1}]}_{\infty} \le O(1) \sum_{b \in [0, k - 1]} \norm{\xi^{(b)}[\mathcal{P}_{k - 1, \ell} \varphi_t]}_{\infty} + n^{-\delta} \norm{\xi[\mathcal{P}_{k - 1, \ell} \varphi_t]}_{\infty} \le O(1) n^{-\delta} \norm{\xi[\varphi_t]}_{\infty}. $$

\textbf{Problem 2:} Such a strong concentration on the number of clique-paths is still too much to hope for, because some clique-paths may require $ e $ and $ f $ to be `at small distance' in $ G $.
For example, clique-paths with $ e \cap S_1 \cap \ldots \cap S_{\ell - 1} \ne \emptyset $ but $ S_{\ell - 1} \cap f = \emptyset $ may only exist between edges $ e, f $ for which $ f $ shares a clique with at least one vertex of $ e $.

\textbf{Solution:} We further redefine $ \varphi_{t + 1} \coloneqq (\mathcal{R}_k^{\ell} \mathcal{P}_{k - 1, \ell})^{\ell} \varphi_t $.
In order to ensure concentration among all pairs of edges $ e, f \in E(G) $, we need to consider only clique-paths in which the \emph{distance} from $ e $ to $ f $ is at least a large constant $ \ell $, where the distance from $ e $ to $ f $ is defined as the length of the shortest \emph{walk} from any vertex of $ e $ to any vertex of $ f $, and a walk of length $ \ell $ in a hypergraph is a sequence of $ \ell $ edges in which adjacent edges have non-empty intersection.
Our next useful observation is that, for clique-paths in $ \mathcal{F}_0 $ in which $ e $ and $ f$ do not intersect, they must at least have distance 1.
This means that applying the operator $ \mathcal{R}_{k}^{\ell} \mathcal{P}_{k - 1, \ell} $ distributes the discrepancy at $ e $ among edges $ f $ sharing clique-paths with $ e $ in which $ e, f $ are distance at least 1 apart (up to error terms).
Thus, applying this operator $ \ell $ times distributes along clique-paths in which $ e, f $ are distance at least $ \ell $ apart, for which we are able to obtain concentration.
The ideas we have discussed so far are in fact sufficient for a proof in the graph case $ k = 2 $, but we encounter one further issue when working with hypergraphs.

\textbf{Problem 3:} In the hypergraph case, there is an analogous issue for $ b $-pivoting clique-paths.
For example, clique-paths with $ |e \cap S_1 \cap \ldots \cap S_{\ell - 1}| = b + 1 $ but $ |S_{\ell - 1} \cap f| = b $ only appear if there also exists an edge intersecting both of $ e \setminus f $ and $ f \setminus e $.
Indeed, in this case there exists at least one vertex $ u \in e \cap S_{\ell - 1} \subseteq V(F_{\ell}) $ with $ u \not \in f $ and at least one vertex $ v \in f \setminus e \subseteq V(F_\ell) $, so $ u $ and $ v $ must belong to at least one edge in the clique $ F_\ell $.

\textbf{Solution:} We exploit a similar concept of distance between $ e \setminus f $ and $ f \setminus e $, considering only walks avoiding $ e \cap f $.
We prove the following Ramsey-type statement for any sufficiently long clique-path $ F $.
Consider starting with $ k $ vertices of $ e $ and repeatedly swapping out vertices to obtain sets $ S_1, \ldots, S_{t - 1}, f $; there must exist some interval $ [a, b] \subseteq [0, t] $ of length at least $ \frac{t}{c} $ upon which every vertex is either constant (i.e. belongs to $ S_a \cap S_b $), or is swapped at least $ c $ times (i.e. the distance in the clique-path between $ S_a \setminus S_b $ and $ S_b \setminus S_a $ is large).
In particular, for any clique-path $ F $ contributing weight to $ (\mathcal{R}_k^\ell \mathcal{P}_{k - 1, \ell})^{\ell} $, there exists some \emph{subpath} $ F' \subseteq F $, corresponding to $ (\mathcal{R}_{k}^\ell \mathcal{P}_{k - 1, \ell})^{i} \mathcal{R}_{k}^\ell $ for some $ i \in [\ell] $, with the property that vertices of $ e \setminus f $ and $ f \setminus e $ are at large distance.
This allows us to show that the number of copies of $ F' $ is concentrated.
Using the concentration of the $ F' $ obtained from all possible $ F $, we may then write the discrepancy $ \xi[\varphi_{t + 1}] $ as a weighted sum of discrepancies $ \xi^{(b)}[\mathcal{P}_{k - 1, \ell} (\mathcal{R}_{k}^\ell \mathcal{P}_{k - 1, \ell})^{\ell - i - 1} \varphi_t] $ for $ b \in [0, k - 1] $, all of which are very small, which turns out to be sufficient.

See the start of \cref{sect:main_proof} for an outline of the structure of the proof itself, as well as the start of \cref{sect:proof_lem_decrease} for more details on the inductive proof of the central lemma.

\section{Clique regularity} \label{section:weightings}

In this section, we assume throughout that $ \varphi $ is an arbitrary $ K_r $-function on $ G $.
It is not hard to see that, for each $ 1 \le s \le j \le k $, the discrepancy $ \xi^{(j)}[\mathcal{R}_s \varphi] $ can be written as a weighted sum of the discrepancies $ \xi^{(j)}[\varphi] $.
The goal of this section is to characterise the weights in this sum, as well as those for $ \mathcal{P}_{j - 1, \ell} $ and $ \mathcal{P}_{j, \ell} $, in terms of sequences of functions depending on the $k$-graph $ G $; we then write our desired pseudorandomness property in terms of these functions.
We start by defining some useful general notation.

\subsection{Notation} \label{section:notation}

Given a finite set $ X $ and function $ \psi: X \to \mathbb{R} $, as well as $ \alpha, \beta: X^2 \to \mathbb{R} $, note that we may equivalently regard $ \psi $ as a real-valued vector $ \mathbf{v} \coloneqq (v_x)_{x \in X} $ and likewise $ \alpha, \beta $ as real-valued matrices $ A, B $, each indexed by elements of $ X $, where $ v_x \coloneqq \psi(x) $, $ A \coloneqq (a_{xy})_{x, y \in X} $ for $ a_{xy} \coloneqq \alpha(x, y) $, and $ B $ is defined analogously for $ \beta $.
We may thus write $ \langle \alpha, \psi \rangle: X \to \mathbb{R} $ for the function corresponding to the vector $ Av $ and $ \alpha \circ \beta: X^2 \to \mathbb{R} $ for the matrix multiplication $ AB $.
Given a scalar $ \lambda \in \mathbb{R} $ write $ \lambda \cdot \psi $ for the function $ x \mapsto \lambda \psi(x) $.
Observe that the identity $ \langle \alpha, \langle \beta, \psi \rangle \rangle \equiv \langle \alpha \circ \beta, \psi \rangle $ is exactly the associativity of matrix-vector multiplication; we make use of this throughout.
Write $ \norm{\psi} \coloneqq \norm{\psi}_{\infty} \coloneqq \max_{S \in X} |\psi(S)| $, and $ \norm{\beta} \coloneqq \max_{S \in X} \sum_{T \in X} |\beta(S, T)| $, which we distinguish from $ \norm{\beta}_{\infty} \coloneqq \max_{S, T \in X} |\beta(S, T)| $.
Given a vector $ \mathbf{x} = (x_1, \ldots, x_{\ell}) \in X^{\ell} $, write $ |\mathbf{x}| \coloneqq \ell $ for its \emph{length}, and given $ I \subseteq [\ell] $ write $ \mathbf{x}_{I} $ for the \emph{subvector} $ (x_i)_{i \in I} $.
Write $ (x)^{\ell} $ for the vector $ (x, \ldots, x) $ of length $ \ell $.
Note that, for convenience, we sometimes index vectors starting from 0 instead of 1; it will be clear when this is the case.

Given a hypergraph $ H $, distinct $ x, y \in V(H) $, $ U \subseteq V(H) $, and $ d \ge 0 $, define a \emph{$ U$-avoiding walk of length $ d $ from $ x $ to $ y $ in $ H $} to be a sequence $ v_0, \ldots, v_d \in V(H) \setminus U $ for which $ x = v_0, y = v_d $, and $ v_i, v_{i - 1} \in e_i $ for some edge $ e_i \in E(H) $ for each $ i \in [d] $; note that we allow the edges $ e_i $ to intersect $ U $, and that both vertices and edges may be repeated.

A \emph{multiset $ X $} is a pair $ (\mathrm{Set}(X), m_X) $ where $ \mathrm{Set}(X) $ is a set and $ m_X \coloneqq m: \mathrm{Set}(X) \to \mathbb{N} $ gives the \emph{multiplicity} of each element.
Say that $ x \in X $ if $ x \in \mathrm{Set}(X) $, and write $ m(x) = 0 $ for any $ x \not \in X $.
Unless otherwise stated, the \emph{size of $ X $} is $ |X| \coloneqq \sum_{x \in \mathrm{Set}(X)} m(x) $.
Given reals $ a_x $ for each $ x \in X $, the \emph{multiset sum} $ \sum_{x \in X} a_x \coloneqq \sum_{x \in \mathrm{Set}(X)} m(x) a_x $.
Given multisets $ A_x $ for each $ x \in X $, the \emph{multiset union} $ \bigcup_{x \in X} A_x $ refers to the multiset $ (\bigcup_{x \in \mathrm{Set}(X)} \mathrm{Set}(A_x), m') $ with $ m'(y) \coloneqq \sum_{x \in X} m_{A_x}(y) = \sum_{x \in \mathrm{Set}(X)} m_X(x) m_{A_x}(y) $.
Given multisets $ X, Y $, the \emph{multiset product} $ X \times Y $ is the multiset with $ \mathrm{Set}(X \times Y) = \mathrm{Set}(X) \times \mathrm{Set}(Y) $ and $ m_{X \times Y}(x, y) = m_X(x) \cdot m_Y(y) $.
For $ i \in \mathbb{N}_0 $, we also write $ X^i \coloneqq X \times \cdots \times X $, where the product consists of $ i $ copies of $ X $.

Given an interval $ I = [a, b] \subseteq \mathbb{Z} $, write $ \mathrm{Len}(I) \coloneqq b - a $ for the \emph{length}.
In general, given an ordered set $ S = \{ x_1, \ldots, x_j \} $ of size $ j $ and a subset $ U \subseteq S $, write $ \iota(U, S) \coloneqq \{ i \in [j]: x_i \in U \} $ to represent the location of $ U $ within the ordering on $ S $.
We omit floor and ceiling notation when it does not affect the argument.

\subsection{Weighted clique-paths} \label{section:clique_paths_def}

As discussed, for $ s \in [k] $, the one-step $ s $-distributor spreads the discrepancy of an $ s $-set among cliques containing it; as such, multiple stages of our process can be thought of as spreading the discrepancy of a given $ s $-set among \emph{clique-paths}, formed by a sequence of cliques, in which adjacent cliques have intersections of specified sizes between $1$ and $ k $.
We now make some definitions to formalise this idea.

\emph{Clique-paths, concatenation, and subpaths:}
Given $ j \in [k] $ and $ \ell \ge 0 $, we define a \emph{$ j $-clique-path $ \mathbf{F} $ of length $ \ell $} to be a pair $ \mathbf{F} \coloneqq ((F_i)_{i \in [\ell]}, \mathbf{S}) $ consisting of sequences $ (F_i)_{i \in [\ell]} $ of copies of $ K_r $ and $ \mathbf{S} \coloneqq (S_i)_{i \in [0, \ell]} $ of pairwise \emph{distinct} ordered sets of size $ j $, respectively, such that $ V((F_1 \cup \cdots \cup F_i) \cap F_{i + 1}) \subseteq V(F_i \cap F_{i + 1}) \subseteq S_i \subseteq V(F_{i + 1}) $ for each $ i \in [0, \ell] $, writing for convenience $ F_0 $ and $ F_{\ell + 1} $ to denote the cliques on vertex sets $ S_0 $ and $ S_{\ell} $, respectively.
We also require that the orderings of $ S_i $ and $ S_{i'} $ agree on the intersection $ S_i \cap S_{i'} $ whenever this is non-empty for $ i, i' \in [0, \ell] $.
Write $ F \coloneqq F_1 \cup \cdots \cup F_{\ell} \cup F_{\ell + 1} $ and $ \mathbf{s} \coloneqq \mathbf{s}(F) \coloneqq (|V(F_i \cap F_{i + 1})|)_{i \in [\ell]} $, and call the sets $ S_i $ the \emph{root sets} of $ \mathbf{F} $.
We identify the clique-path with the pair $ (F, \mathbf{S}) $, observing that this information uniquely determines $ (F_i)_{i \in [\ell]} $, since $ F_i $ is the unique $ r $-clique in $ F $ with $ S_{i - 1} \subseteq V(F_i) $.
Intuitively, one can imagine clique-paths as being constructed iteratively as follows.
Start with an ordered set $ S_0 $ and take a clique $ F_1 $ containing $ S_0 $, then choose a set $ U \subseteq V(F_1) $ of size $ s_1 $, extend $ U $ to a set $ S_1 $ of size $ j $ by adding $ j - s_1 $ new vertices, and endow $ S_1 $ with an ordering (compatible with the ordering of $ S_0 $ on $ S_0 \cap S_1 $).
We then take another clique $ F_2 $ containing $ S_1 $, and iterate this process in the obvious way; see \cref{lem:process_graphs_char} for further motivation of this definition.
Note that we allow $ \ell $ to be zero, and in this case $ \mathbf{F} $ consists only of an (ordered) $ j $-set $ S_0 $.

Regard two $ j $-clique-paths $ (F, \mathbf{S}) $ and $ (F', \mathbf{S}') $ as \emph{isomorphic} if there exists a hypergraph isomorphism $ \psi $ from $ F $ to $ F' $ which maps $ S_i $ to $ S'_i $ in the unique order-preserving way for each $ i \in [0, \ell] $; henceforth, we consider clique-paths only up to isomorphism.
Given $ j $-clique-paths $ \mathbf{F}^1 = (F^1, \mathbf{S}^1) $ of length $ \ell_1 $ and $ \mathbf{F}^2 = (F^2, \mathbf{S}^2) $ of length $ \ell_2 $, define the \emph{concatenation} $ \mathbf{F}^1 \bullet \mathbf{F}^2 $ to be the unique $ j $-clique-path $ (F', \mathbf{S}') $ obtained by gluing in the obvious way.
Specifically, take $ F' $ to be a hypergraph formed by the union of $ F^1 $ and $ F^2 $, in which we identify $ S^1_{\ell_1} $ with $ S^2_0 $ in the unique order-preserving way, but regard the vertex sets as otherwise disjoint, and set $ \mathbf{S}' \coloneqq (S^1_0, \ldots, S^1_{\ell_1}, S^2_1, \ldots, S^2_{\ell_2}) $.
It is easy to see that $ \bullet $ is associative, which we will use throughout.
Given sets $ \mathcal{F}_1, \mathcal{F}_2 $ of $ j $-clique-paths, write $ \mathcal{F}_1 \bullet \mathcal{F}_2 \coloneqq \bigcup_{\mathbf{F}^1 \in \mathcal{F}_1, \mathbf{F}^2 \in \mathcal{F}_2} \mathbf{F}^1 \bullet \mathbf{F}^2 $.
Given $ 0 \le a < b \le \ell $, define the \emph{subpath} $ \mathbf{F}_{[a, b]} $ to be the $ j $-clique-path $ ((F_i)_{i \in [a + 1, b]}, \mathbf{S}_{[a, b]}) $.

Given $ 1 \le s \le j \le k $, write $ \mathcal{F}_{s}^{(j)} $ for the set of all possible $ j $-clique-paths $ \mathbf{F} = ((F_1), (S_0, S_1)) $ of length $ 1 $ for which $ |V(F_1) \cap S_1| = s $.
Observe that each such $ F $ is the union of a copy of $ K_r $ with a set of $ j - s $ isolated vertices, but that multiple (non-isomorphic) such $ \mathbf{F} $ are possible, depending on the intersection $ S_0 \cap S_1 $ and the orderings on $ S_0 $ and $ S_1 $.
Given $ \ell \ge 0 $ and $ \mathbf{s} \in [j]^{\ell} $, define further $ \mathcal{F}_{\mathbf{s}}^{(j)} \coloneqq \mathcal{F}_{s_1}^{(j)} \bullet \ldots \bullet \mathcal{F}_{s_{\ell}}^{(j)} $.
Note that this is exactly the set of clique-paths of length $ \ell $ with $ |V(F_i \cap F_{i + 1})| = s_i $ for each $ i \in [\ell] $, that is, $ \mathbf{s}(F) = \mathbf{s} $.
Intuitively, this definition corresponds directly to the iterative construction above.
We remark that again, $ \ell $ may be zero, in which case $ \mathbf{s} = () $ is the empty sequence and $ \mathcal{F}_{\mathbf{s}}^{(j)} $ is a singleton containing the unique $ j $-clique-path of length zero.

\begin{figure}[!ht]
    \centering

    \begin{tikzpicture}[
        x=1cm,y=1cm,
        Fline/.style={draw=blue!70!black, line width=1.5pt, line cap=round, line join=round},
        Sline/.style={draw=red!75!black,  line width=1pt, line cap=round, line join=round},
        Flab/.style={blue!70!black, font=\large},
        Slab/.style={red!75!black,  font=\large}
    ]
        
        % -------------------------------------------------
        % Blue regions F_i
        % -------------------------------------------------
        
        % F_1
        \draw[Fline] (0,0) rectangle (3,2.5);
        % F_2
        \draw[Fline] (2.5,-0.25) rectangle (6,2.75);
        % F_3
        \draw[Fline] (5,-0.2) rectangle (9,3);
        % F_4
        \draw[Fline] (5.05,-2.25) rectangle (5.95,2.7);
        
        % -------------------------------------------------
        % Red strips / paths s_i
        % -------------------------------------------------
        
        % s_0
        \draw[Sline] (0,0) rectangle (3,1);
        % s_1
        \draw[Sline] (2.5, -0.25) rectangle (3.5, 2.75);
        % s_2
        \draw[Sline] (5.1,-0.15) rectangle (7, 2.65);
        % s_3
        \draw[Sline] (5.15,-1.5) rectangle (5.9,2.6);
        %s_4
        \draw[Sline]
            (5.25,1) -- (5.75,1) -- (5.75,-0.5) -- (7.5,-0.5) -- (7.5,-0.75) -- (5.25,-0.75) -- cycle;
        
        % -------------------------------------------------
        % Labels
        % -------------------------------------------------
        
        \node[Flab] at (1.5,2.75) {$F_1$};
        \node[Flab] at (3.5,3) {$F_2$};
        \node[Flab] at (7,3.75) {$F_3$};
        \node[Flab] at (6.25,-1.5) {$F_4$};
        
        \node[Slab] at (1.5,1.3) {$S_0$};
        \node[Slab] at (3.8,1.25) {$S_1$};
        \node[Slab] at (7.8,1.25) {$S_2$};
        \node[Slab] at (5.5,-1.8) {$S_3$};
        \node[Slab] at (7.8,-0.65) {$S_4$};
        
    \end{tikzpicture}
    
    \caption{A $ j $-clique-path $ (F, \mathbf{S}) $; the blue $ F_i $ are $ r $-cliques and the red $ S_i $ are sets of size $ j $.
    Each clique $ F_{i + 1} $ may contain any set of up to $ j $ vertices of the previous clique $ F_i $, but these must be contained in the root set $ S_i $, which in turn is contained in $ F_{i + 1} $.}
    \label{fig:clique_path}
\end{figure}

\emph{Semi-copies:}
Given a $ j $-clique-path $ \mathbf{F} = (F, \mathbf{S}) $ of length $ \ell $ and sets $ S', T' \in E_j(G) $, define a \emph{homomorphism from $ \mathbf{F} $ to $ G $ rooted at $ S' $ and $ T' $} to be a (not necessarily injective) homomorphism $ \Phi $ from $ F $ to $ G $ mapping the sets $ S_0 $ and $ S_{\ell} $ to $ S' $ and $ T' $, respectively, such that $ \Phi \restriction_{S_i} $ is (injective and) order-preserving for each $ i \in [0, \ell] $, with respect to the canonical ordering on $ V(G) = [n] $.
Consider rooted homomorphisms $ \Phi_1, \Phi_2 $ to be equivalent if they differ only by permutations of the vertex sets $ V(F_i) \setminus (S_{i - 1} \cup S_i) $ for each $ i \in [\ell] $, and call each equivalence class a \emph{semi-copy of $ \mathbf{F} $ in $ G $ rooted at $ S' $ and $ T' $}; note that homomorphisms are equivalent if and only if $ \Phi_2 = \Phi_1 \psi $ for some clique-path isomorphism $ \psi $ (as defined above) from $ \mathbf{F} $ to itself.
Identify a semi-copy $ \Phi $ with the pair $ ((\Phi(F_i))_{i \in [\ell]}, (\Phi(S_i))_{i \in [0, \ell]}) $, noting that this is well-defined since $ \mathbf{S} $ is preserved by clique-path automorphisms, and that it uniquely determines $ \Phi $, and write $ X_G(S', T', \mathbf{F}) $ for the set of all semi-copies of $ \mathbf{F} $ in $ G $ rooted at $ S' $ and $ T' $.

We remark that, somewhat unusually, we are interested in copies which are potentially non-injective and partially labelled.
As mentioned, we use semi-copies of clique-paths to represent the way that discrepancies are redistributed by our process; since cliques are chosen one at a time, the resulting walk need not be injective.
We regard cliques in $ G $ as unlabelled, and thus want to consider unlabelled copies of clique-paths to avoid duplication.
However, we consider the sets $ S_i $ as ordered to ensure that concatenation and subpaths are well-defined; otherwise, there may be multiple non-isomorphic ways to glue together two clique-paths.
It is nonetheless useful to be able to ignore these orderings sometimes, and group together clique-paths which differ only in the orderings; this motivates the next definition.

\emph{Semi-isomorphisms:}
Say that two $ j $-clique-paths $ \mathbf{F} $ and $ \mathbf{F}' $ are \emph{semi-isomorphic} if there exists a hypergraph isomorphism $ \psi $ from $ \mathbf{F} $ to $ \mathbf{F}' $ which maps $ S_0 $ to $ S'_0 $ in the unique order-preserving way; unlike in an isomorphism, we allow changes to the ordering of $ S_{i} $ for each $ i \in [\ell] $.
This is clearly an equivalence relation on the set of $ j $-clique-paths, and we call each equivalence class $ \mathcal{F} $ a \emph{semi-ordered $ j $-clique-path}, noting that we may identify $ \mathcal{F} $ with the pair $ (F, \overline{\mathbf{S}}) $, where only $ \overline{S}_0 $ is equipped with an ordering and $ \overline{S}_i $ is considered unordered for $ i \in [\ell] $.
Given a $ j$-clique-path~$ \mathbf{F} $, write $ \mathcal{SO}(\mathbf{F}) $ for the semi-ordered $ j $-clique-path corresponding to the set of all $ j $-clique-paths which are semi-isomorphic to $ \mathbf{F} $.
We may equivalently think of $ \mathcal{SO}(\mathbf{F}) $ as simply taking $ \mathbf{F} $ and ignoring the orderings of all root sets but the first.

\emph{Weights:}
Given a semi-copy $ \Phi $ in $ G $ of a $ j $-clique-path $ \mathbf{F} = (F, \mathbf{S}) $ of length $ \ell $ with $ \mathbf{s} \coloneqq \mathbf{s}(F) $, define its \emph{weight} to be
\begin{equation} \label{eqn:clique_path_weight_def}
    w(\Phi) \coloneqq \prod_{i = 1}^{\ell} \frac{\hat{w}_{s_i}}{|K_r(\Phi(V(F_i \cap F_{i + 1})))|}, \quad \text{where } \hat{w}_{s} \coloneqq -\frac{1}{\binom{r - s}{j - s}} \text{ for each } s \in [j].
\end{equation}
Note that in the case $ \ell = 0 $ we treat the empty product $ w(\Phi) $ as being $ 1 $.
We may then define the \emph{weight of $ \mathbf{F} $ in $ G $} to be the function $ \Xi[\mathbf{F}]: E_j^2 \to \mathbb{R} $ given by
$$ \Xi[\mathbf{F}](S, T) \coloneqq \sum_{\Phi \in X_G(S, T, \mathbf{F})} w(\Phi), $$
and given a (multi-)set $ \mathcal{F} $ of $ j $-clique-paths, write $ \Xi[\mathcal{F}] \coloneqq \sum_{\mathbf{F} \in \mathcal{F}} \Xi[\mathbf{F}] $, summing with multiplicity.

The function $ \Xi[\mathcal{F}] $ assigns to each pair $ (S, T) $ of $ j $-sets (or edges in the case $ j = k $) a weighted sum over all (semi-)copies of any clique-path $ \mathbf{F} \in \mathcal{F} $ rooted at $ S $ and $ T $.
This will be used to represent the effect that the discrepancy $ \xi^{(j)}[\varphi](T) $ has on the discrepancy $ \xi^{(j)}[\mathcal{R}_s \varphi](S) $; see \cref{lem:process_graphs_char} and in particular \cref{fact:process_graphs_onestep} for details of this characterisation.
Given a semi-ordered $ j $-clique-path $ \mathcal{F} = (F, \overline{\mathbf{S}}) $ of length $ \ell \ge 0 $, define $ \iota(\mathcal{F}) \coloneqq \iota(\overline{S}_0 \cap \overline{S}_{\ell}, \overline{S}_0) $, recalling the definition of $ \iota $ from \cref{section:notation}, observing that $ \Xi[\mathcal{F}](S, T) = 0 $ for any $ S, T \in E_j $ which do not satisfy $ \iota(\mathcal{F}) \subseteq \iota(S \cap T, S) $, by the definition of a semi-copy.

\emph{Concentration:}
Our most significant pseudorandomness property will be the concentration of $ \Xi[\mathcal{F}] $ (around its expectation) for certain semi-ordered clique-paths $ \mathcal{F} $.
However, in a random hypergraph, it is too much to expect this for all clique-paths, so we make the following definition to characterise those clique-paths for which we can expect concentration.
Given $ b, c \in \mathbb{N}_0 $, say that a $ j $-clique-path $ (F, \mathbf{S}) $ of length $ \ell $ is \emph{$ (b, c) $-centred} if $ |S_0 \cap S_{\ell}| = b $ and, for all $ x \in S_0 \setminus S_{\ell} $ and $ y \in S_{\ell} \setminus S_0 $, every $ (S_0 \cap S_{\ell}) $-avoiding walk from $ x $ to $ y $ in $ F $ has length at least $ c $.
Observe that this property is preserved by semi-isomorphism, and we thus extend the definition naturally to semi-ordered clique-paths.
Intuitively, being centred ensures that $ S_0 \setminus S_{\ell} $ and $ S_{\ell} \setminus S_0 $ have sufficient distance in the hypergraph $ F $ to avoid trivial obstructions to the number of rooted copies of $ F $ being well-concentrated; for example, this avoids any $ x \in S_0 \setminus S_{\ell} $ and $ y \in S_{\ell} \setminus S_0 $ sharing an edge in the hypergraph $ F $.

In order to retrospectively motivate the preceding definitions, we now state the key lemma which uses weights of clique-paths to characterise our distributor process.
The correspondence is intuitively clear; we defer the (straightforward) proof to \cref{sect:proc_graph_char_proof}.

\begin{lemma} \label{lem:process_graphs_char}
    Let $ \ell \ge 2 $.
    For every $ j \in [k] $, there exists a multiset $ \mathcal{S}_{j, \ell} $ of size (with multiplicity) at most $ 2^{(2 \ell)^j} $, consisting of elements from $ [j - 1]^{\le (2 \ell)^j} $, such that the following holds for any $ i \in [0, \ell] $ and $ K_r $-function $ \varphi $ on $ G $.
    \begin{enumerate}[\rm (\roman*)]
        \item $ \xi^{(j)}[\mathcal{P}_{j - 1, \ell} \varphi] = \left\langle \sum_{\mathbf{s} \in \mathcal{S}_{j, \ell}} \Xi[\mathcal{F}^{(j)}_{\mathbf{s}}], \xi^{(j)}[\varphi] \right\rangle $. \label{cond:proc_graph_char_1}
    \end{enumerate}
    Writing $ \hat{\mathbf{s}} \coloneqq (j)^{\ell} $, for each $ \mathbf{s} = (\mathbf{s}_1, \ldots, \mathbf{s}_i) \in (\mathcal{S}_{j, \ell})^i $, define
    $$ \mathcal{G}_\ell^{(j)}(\mathbf{s}) \coloneqq \mathcal{F}^{(j)}_{\hat{\mathbf{s}}} \bullet \mathcal{F}^{(j)}_{\mathbf{s}_1} \bullet \mathcal{F}^{(j)}_{\hat{\mathbf{s}}} \bullet \mathcal{F}^{(j)}_{\mathbf{s}_2} \bullet \ldots \bullet \mathcal{F}^{(j)}_{\hat{\mathbf{s}}} \bullet \mathcal{F}^{(j)}_{\mathbf{s}_i} \quad \text{and} \quad \mathcal{G}_{\ell, i}^{(j)} \coloneqq \bigcup_{\mathbf{s} \in (\mathcal{S}_{j, \ell})^i} \mathcal{G}_\ell^{(j)}(\mathbf{s}), $$
    regarding $ \mathcal{G}_{\ell, i}^{(j)} $ as a multiset, with multiplicities according to those in $ (\mathcal{S}_{j, \ell})^i $.
    Then each $ j $-clique-path in $ \mathcal{G}_{\ell, i}^{(j)} $ has length at most $ \ell^{3k} $, and furthermore
    \begin{enumerate}[\rm (\roman*)] \setcounter{enumi}{1}
        \item $ \xi^{(j)}[(\mathcal{R}_j^{\ell} \mathcal{P}_{j - 1, \ell})^i \varphi] = \langle \Xi[\mathcal{G}_{\ell, i}^{(j)}], \xi^{(j)}[\varphi] \rangle $. \label{cond:proc_graph_char_2}
    \end{enumerate}
\end{lemma}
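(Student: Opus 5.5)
The whole argument rests on a one-step identity, which I would establish first and then iterate using associativity of $\langle\cdot,\cdot\rangle$ and $\circ$ together with associativity of $\bullet$.

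\textbf{One-step identity.} For $1 \le s \le j \le k$ and any $K_r$-function $\varphi$, I claim
$$ \xi^{(j)}[\mathcal{R}_s\varphi] = \xi^{(j)}[\varphi] + \left\langle \Xi[\mathcal{F}^{(j)}_s], \xi^{(s)}[\varphi]\text{-lifted}\right\rangle, $$
and more usefully, for $s=j$,
$$ \xi^{(j)}[\mathcal{R}_j\varphi] = \xi^{(j)}[\varphi] + \langle \Xi[\mathcal{F}^{(j)}_j], \xi^{(j)}[\varphi]\rangle. $$

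To see this, write $\xi^{(j)}[\mathcal{R}_s\varphi](S)-\xi^{(j)}[\varphi](S)$ as a sum over edges $e \supseteq S$ and cliques $K \ni e$ of $-\binom{r-s}{k-s}^{-1}\sum_{U\in\binom{V(K)}{s}}\xi^{(s)}(U)/|K_r(U)|$. Then reorganise by the clique $K \supseteq S$. Each such $K$ contains $\binom{r-j}{k-j}$ edges through $S$, and the ratio $\binom{r-j}{k-j}/\binom{r-s}{k-s}$ combines with the combinatorics to give $\hat w_s = -1/\binom{r-s}{j-s}$.

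Since $\xi^{(s)}(U)=\binom{k-s}{j-s}^{-1}\sum_{T\supseteq U,\,|T|=j}\xi^{(j)}(T)$ (double counting edges), each term corresponds to a length-one $j$-clique-path with $S_0=S$, $F_1=K$, $V(F_1)\cap S_1=U$, and $S_1=T$. Recovering exactly $\hat w_s/|K_r(U)|$ requires checking that the binomial factors cancel; this is a routine identity of the form $\binom{r-j}{k-j}\binom{r-s}{j-s}=\binom{r-s}{k-s}\binom{k-s}{j-s}$. The identity term $\xi^{(j)}[\varphi]$ is the length-zero path.

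The delicate point is the requirement that root sets be pairwise distinct and ordered compatibly. The orderings are handled by the canonical order on $[n]$ and semi-copies. For distinctness, I would either absorb the $T=S$ term into the identity, or note that $\mathcal{F}^{(j)}_s$ for $s<j$ automatically has $S_1\neq S_0$. When $s=j$ and $T=S$, the term cancels against identity contributions, so that $\mathcal{R}_j$ is exactly $\langle \mathrm{Id}+\Xi[\mathcal{F}^{(j)}_j],\cdot\rangle$. I would verify this carefully, since it is the main obstacle.

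\textbf{Operators with $s<j$.} For $s<j$, $\mathcal{R}_s$ acts on $\xi^{(j)}$ as $\mathrm{Id}+\Xi[\mathcal{F}^{(j)}_s]$, and the same computation applies.

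\textbf{Expanding $\mathcal{P}_{j-1,\ell}$.} Part (i) follows by writing $\mathcal{P}_{j-1,\ell}$ as a word of length at most $(2\ell)^{j-1}$ in $\mathcal{R}_1,\dots,\mathcal{R}_{j-1}$, which is immediate by induction from the recursion. Expanding each factor as $\mathrm{Id}+\Xi[\mathcal{F}_{s}]$ and using $\Xi[\mathbf{F}^1]\circ\Xi[\mathbf{F}^2]=\Xi[\mathbf{F}^1\bullet\mathbf{F}^2]$ gives a sum over subwords. This composition rule holds because semi-copies of a concatenation are exactly pairs of semi-copies agreeing on the glued root set, and the weights multiply.

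Each subword is a sequence $\mathbf{s}\in[j-1]^{\le(2\ell)^j}$. Taking $\mathcal{S}_{j,\ell}$ to be the multiset of these subwords gives size at most $2^{(2\ell)^{j-1}}\le 2^{(2\ell)^j}$.

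\textbf{Iterating.} For (ii), $\mathcal{R}_j^\ell$ is $(\mathrm{Id}+\Xi[\mathcal{F}_j])^\ell$. I need this to equal $\Xi[\mathcal{F}^{(j)}_{\hat{\mathbf s}}]$, which is the point of the definition. I expect $\mathcal{R}_j$ to satisfy $\xi^{(j)}[\mathcal{R}_j\varphi]=\langle\Xi[\mathcal{F}^{(j)}_j],\xi^{(j)}[\varphi]\rangle$ exactly, with the identity cancelling: the $T=S$ terms with $U=S$ contribute $-\xi^{(j)}(S)$ in total. Then $\mathcal{R}_j^\ell$ gives $\Xi[\mathcal{F}^{(j)}_{(j)^\ell}]$.

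If the identity instead survives in (i), I would simply include the empty sequence in $\mathcal{S}_{j,\ell}$. Composing $i$ blocks via the composition rule yields $\Xi[\mathcal{G}^{(j)}_{\ell,i}]$ with the stated multiplicities.

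\textbf{Length bound.} A path in $\mathcal{G}^{(j)}_{\ell,i}$ has length at most $i(\ell+(2\ell)^j)\le \ell^{3k}$ for $\ell\ge2$.
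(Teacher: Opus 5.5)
Your route is the paper's: a one-step identity giving $\mathbf{1}+\Xi[\mathcal{F}^{(j)}_s]$ for $s<j$ and $\Xi[\mathcal{F}^{(j)}_j]$ alone for $s=j$, then expanding the product over subwords for (i), then composing via $\Xi[\mathbf{F}^1\bullet\mathbf{F}^2]=\Xi[\mathbf{F}^1]\circ\Xi[\mathbf{F}^2]$ for (ii). It goes through, but two points need correcting.

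\textbf{The $s=j$ identity.} You state three mutually inconsistent versions of it. Only the last one, with no identity term, is correct, and the choice matters. If $\mathcal{R}_j$ acted as $\mathbf{1}+\Xi[\mathcal{F}^{(j)}_j]$, then $\mathcal{R}_j^\ell$ would equal $\sum_t\binom{\ell}{t}\Xi[\mathcal{F}^{(j)}_{(j)^t}]$, and (ii) would be false as stated. The cancellation rests on two facts.
\begin{enumerate}[(a)]
\item In the raw computation with $s=j$ one has $U=T$, so the $T=S$ term equals $-\sum_{K\in K_r(S)}\xi^{(j)}(S)/|K_r(S)|=-\xi^{(j)}(S)$.
\item $\Xi[\mathcal{F}^{(j)}_j](S,S)=0$. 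The abstract root sets $S_0\neq S_1$ both lie in $V(F_1)$, and every homomorphism of $K_r^{(k)}$ into a $k$-graph is injective, since any two vertices share an edge ($k\ge 2$). So no semi-copy has $\Phi(S_0)=\Phi(S_1)$.
\end{enumerate}
You identify distinctness of root sets as the key point, but you need the injectivity argument in (b). Semi-copies are in general non-injective, so abstract distinctness alone does not exclude $T=S$. Indeed, for $s<j$ the $j-s$ vertices of $S_1$ outside $F_1$ can be mapped into $S$. There $\Xi[\mathcal{F}^{(j)}_s]$ already contains every raw term, including $T=S$, so the identity term survives.

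\textbf{The word length.} $\mathcal{P}_{j-1,\ell}$ is not a word of length at most $(2\ell)^{j-1}$; already $\mathcal{P}_{1,\ell}=\mathcal{R}_1^{\ell^2}$. The recursion gives length $m_j=\ell(\ell+m_{j-1})$ with $m_1=0$, so $m_j=\ell^2+\cdots+\ell^j\le(2\ell)^j$. This is exactly what the lemma needs: subwords lie in $[j-1]^{\le(2\ell)^j}$, and $|\mathcal{S}_{j,\ell}|=2^{m_j}\le 2^{(2\ell)^j}$. Your final conclusion therefore stands, but your intermediate bound $2^{(2\ell)^{j-1}}$ is wrong.
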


We may now proceed to state our pseudorandomness property.

\subsection{Regularity conditions} \label{sect:prop_reg}

Let $ n, \ell, c \in \mathbb{N} $, $ C, \eps > 0 $, and $ G $ be a $ k $-graph on $ n $ vertices.
For each $ j \in [0, k] $, define
\begin{equation} \label{eqn:gamma_d_defns}
    \gamma_j \coloneqq \gamma_j(G) \coloneqq \frac{|K_r(G)| \binom{r}{j}}{|E_j|} \quad \text{and} \quad d_j \coloneqq d_j(G) \coloneqq \frac{|E(G)| \binom{k}{j}}{|E_j|}.
\end{equation}
Note that $ \gamma_j $ and $ d_j $ represent the average number of cliques and edges, respectively, containing a given element of $ E_j $.
Say that $ G $ is \emph{$ (\ell, c, C, \eps) $-clique-regular} if, for all $ 0 \le b < j \le k $ and $ (b, c) $-centred semi-ordered $ j $-clique-path $ \mathcal{F} $ of length at most $ \ell^{3k} $, there exist $ \alpha_{\mathcal{F}} \in \mathbb{R} $ with $ |\alpha_{\mathcal{F}}| \le C $ and $ \zeta_{\mathcal{F}}: E_j^2 \to \mathbb{R} $ with $ \norm{\zeta_{\mathcal{F}}} \le \eps $ such that the following hold.

\begin{enumerate}[(R1)]
    \item $ |N_G^{\mathrm{e}}(S)| = (1 \pm \eps) d_{k - 1} $ for every $ S \in E_{k - 1} $; \label{cond:creg1}
    \item $ |K_r(e)| = (1 \pm \eps) \gamma_k $ for every $ e \in E(G) $; \label{cond:creg2}
    \item $ \Xi[\mathcal{F}](S, T) = \alpha_{\mathcal{F}} \frac{\gamma_j}{\gamma_b} + \zeta_{\mathcal{F}}(S, T) $ for all $ S, T \in E_j $ with $ \iota(\mathcal{F}) \subseteq \iota(S \cap T, S) $. \label{cond:creg3}
\end{enumerate}

This completes the statement of \cref{thm:proc_conv_fcd}.
In words, \ref{cond:creg1} says that the codegrees of $ G $ are well-concentrated, and \ref{cond:creg2} tells us that the number of cliques containing any edge is well-concentrated.
On a high level, \ref{cond:creg3} says that the (weighted) clique-path count from $ S $ to $ T $ is well-concentrated for any pair of $ j $-sets $ S, T $ (or edges if $ j = k $), provided that we consider only those clique-paths which do not have some degenerate structure trivially blocking their concentration.
We now proceed to prove \cref{thm:proc_conv_fcd}.

\section{Convergence in clique-regular hypergraphs} \label{sect:main_proof}

In this section, we prove \cref{thm:proc_conv_fcd}; our proof revolves around the following key lemma.

\begin{lemma} \label{lem:norm_decrease}
    Suppose $ 1/n \ll \eps \leq 1/C_2 \ll 1/C_1 \ll 1/\ell \ll 1/c, 1/r $.
    Let $ G $ be an $ (\ell, c, C_1, \eps) $-clique-regular $ k $-graph on $ n $ vertices and $ \varphi $ be a globally-balanced $ K_r $-function on $ G $.
    Then
    $$ \norm{\xi^{(j)}[\mathcal{P}_{j, \ell} \varphi]} \le C_2 \eps \norm{\xi^{(j)}[\varphi]} $$
    for every $ j \in [k] $.
\end{lemma}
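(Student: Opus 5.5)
Since $\mathcal{P}_{j,\ell} = (\mathcal{R}_j^{\ell}\mathcal{P}_{j-1,\ell})^{\ell}$, by \cref{lem:process_graphs_char}\ref{cond:proc_graph_char_2} (with $i=\ell$) we have
$$ \xi^{(j)}[\mathcal{P}_{j,\ell}\varphi] = \langle \Xi[\mathcal{G}^{(j)}_{\ell,\ell}], \xi^{(j)}[\varphi]\rangle . $$
The task is therefore to show that the operator $\Xi[\mathcal{G}^{(j)}_{\ell,\ell}]$ acts, up to an error of order $\eps$, as a combination of operators that annihilate globally-balanced discrepancies or reduce them to lower discrepancies. I would argue by induction on $j \in [k]$, proving the claim simultaneously for all globally-balanced $\varphi$. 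The hypothesis should be strengthened to give $\norm{\xi^{(b)}[\mathcal{P}_{j-1,\ell}\psi]} \le C'\eps\norm{\xi^{(b)}[\psi]}$ for all $b<j$. One also needs a bound of the form $\norm{\xi^{(b)}[\psi]} \le O(d)\,\norm{\xi^{(j)}[\psi]}$ relating discrepancies at different levels, which follows from \ref{cond:creg1}. Finally, $\mathcal{R}_s$ must preserve global balance; this is a direct computation using $\sum_e \xi(e)=0$.

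\textbf{Step 1 (Ramsey-type decomposition).} Every clique-path in $\mathcal{G}^{(j)}_{\ell,\ell}$ has length at most $\ell^{3k}$. Track how the vertices of $S_0$ are swapped along the root sets. I would show that for $c \ll \ell$ there is an index $i \in [\ell]$ with the following property: the subpath corresponding to $\mathcal{F}_{\hat{\mathbf{s}}}\bullet \mathcal{G}^{(j)}_\ell(\mathbf{s}_{i+1},\dots)$, or the analogous block structure, is $(b,c)$-centred for some $b<j$. This uses pigeonhole on the at most $j$ nested levels of "vertex stays fixed versus moves", and it works because each $\mathcal{F}_{\hat{\mathbf{s}}}$ block of $\ell$ full $j$-steps moves every non-fixed vertex at distance at least one more. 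We would then partition $\mathcal{G}^{(j)}_{\ell,\ell}$ according to the first such $i$. Each clique-path factors as a prefix $\mathbf{F}'$, which is centred, concatenated with a suffix. Using associativity, $\langle \alpha\circ\beta,\psi\rangle=\langle\alpha,\langle\beta,\psi\rangle\rangle$, the suffix action is recognised as $\xi^{(j)}[\mathcal{P}_{j-1,\ell}(\mathcal{R}_j^\ell\mathcal{P}_{j-1,\ell})^{\ell-i-1}\varphi]$.

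\textbf{Step 2 (apply \ref{cond:creg3}).} For a centred prefix, $\Xi[\mathcal{F}'](S,T) = \alpha\gamma_j/\gamma_b + \zeta(S,T)$ on pairs with $\iota(\mathcal{F}')\subseteq\iota(S\cap T,S)$. Summing the main term against a discrepancy $\psi=\xi^{(j)}[\cdot]$ over $T \supseteq U$, where $U$ is the prescribed $b$-subset of $S$, gives $\alpha\frac{\gamma_j}{\gamma_b}$ times a $b$-discrepancy of the intermediate function, up to normalisation by $\binom{k-b}{j-b}$. This vanishes for $b=0$ by global balance. For $b\ge1$ it is controlled by the induction hypothesis applied through the $\mathcal{P}_{j-1,\ell}$ at the start of the suffix, so its size is $O(C_1)\cdot C'\eps$ times the appropriate normalisation. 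The error term contributes at most $\norm{\zeta}\,\norm{\psi} \le \eps\norm{\psi}$.

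\textbf{Step 3 (bounding the intermediate functions).} I also need crude operator bounds $\norm{\Xi[\mathcal{F}^{(j)}_{\mathbf{s}}]} = O(1)$, which follow from \ref{cond:creg2} and the weights $\hat w_s/|K_r(\cdot)|$. These give $\norm{\xi^{(j)}[\text{intermediate}]} \le 2^{O(\ell^{3k})}\norm{\xi^{(j)}[\varphi]}$. Since the multiset sizes are at most $2^{(2\ell)^j}$, summing over all pieces yields a constant depending only on $\ell,c,C_1$, which is absorbed into $C_2$.

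I expect the main obstacle to be Step 1 together with the normalisation bookkeeping in Step 2. The combinatorial lemma must actually produce a centred piece aligned with block boundaries in every path. The matching of $\gamma_j/\gamma_b$ against the natural scale of $\xi^{(b)}$, namely $d_b/d_j$-type ratios, must also work out so that the lower-discrepancy contributions are genuinely $O(\eps)\norm{\xi^{(j)}}$ rather than being inflated by a polynomial factor.
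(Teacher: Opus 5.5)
\textbf{Gap: the centred piece cannot be taken as an initial segment.} Your outline follows the paper's architecture. You expand via \cref{lem:process_graphs_char}, find a centred window ending just before a $\mathcal{P}_{j-1,\ell}$ block, and apply \ref{cond:creg3}. You then kill the main term by global balance or by the induction hypothesis applied to $\widehat\psi=\mathcal{P}_{j-1,\ell}\psi$. But Step~1 as you state it is false: for $j\ge 2$, a clique-path in $\mathcal{G}^{(j)}_{\ell,\ell}$ need not have any centred \emph{initial} segment.

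Take a path whose first step replaces some $y\in S_0$ by a vertex $z\in V(F_1)\setminus S_0$, after which $z$ stays in every later root set. For example, make all later steps full $j$-steps, using the empty sequence in each $\mathcal{P}_{j-1,\ell}$ block. Such paths occur in $\mathcal{G}^{(j)}_{\ell,\ell}$ with positive multiplicity. For every $i\ge 1$ we have $y\in S_0\setminus S_i$ and $z\in S_i\setminus S_0$, and $y,z$ lie in a common edge of the clique $F_1$. So $y,z$ is an $(S_0\cap S_i)$-avoiding walk of length $1$, and no segment $\mathbf{F}_{[0,i]}$ is $(b,c)$-centred.

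Your heuristic that each $\mathcal{F}_{\hat{\mathbf s}}$ block pushes every non-fixed vertex one step further away is exactly what fails here: a vertex outside $S_0$ can enter early and persist. The interval argument of \cref{fact:ramsey_basic} handles this by restricting to the index interval of such a long-lived vertex. That is why the centred window $\mathbf{F}_{[M_1,M_2]}$ it produces generally starts in the middle of the path, not at $S_0$.

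\textbf{Repair.} Use the paper's three-part factorisation $\Xi[\mathcal B]=m(\mathcal B)\,\Xi[\hat{\mathbf F}_1]\circ\Xi[\hat{\mathcal F}_2]\circ\Xi[\hat{\mathcal F}_3]$ from \cref{lem:centered_decomp}. The non-centred initial segment $\hat{\mathbf F}_1$ is controlled only by the crude bound $\norm{\Xi[\hat{\mathbf F}_1]}\le C_1^{\ell^{3k}}$. This costs nothing, because the factor $\eps$ comes from the centred window $\hat{\mathcal F}_2$ acting on $\xi^{(j)}[\widehat\psi]$.

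You also skip a bookkeeping point needed to make the partition valid. Condition \ref{cond:creg3} is only available for \emph{semi-ordered} clique-paths, so each class must contain all re-orderings of the window's root sets after the first. Moreover, the window data $(i_1,i_2,b)$ must be determined by $\mathbf F_{[0,M_2]}$ alone and be invariant under semi-isomorphism; this is the second part of \cref{lem:ramsey_full}. Only then does each class factor as a fixed ordered prefix, a semi-ordered window, and all continuations.

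\textbf{Induction hypothesis.} Drop the strengthened hypothesis $\norm{\xi^{(b)}[\mathcal{P}_{j-1,\ell}\psi]}\le C'\eps\norm{\xi^{(b)}[\psi]}$. For $0<b<j-1$ it fails in general: $\mathcal{R}_s$ with $s>b$ does not preserve $\xi^{(b)}\equiv 0$, so $\xi^{(b)}[\psi]\equiv0$ does not force $\xi^{(b)}[\mathcal{P}_{j-1,\ell}\psi]\equiv 0$. What you actually need is the hypothesis at level $j-1$ combined with \cref{fact:uniformity_disc}~\ref{cond:unif_disc_4}. This gives $\norm{\xi^{(b)}[\widehat\psi]}=O\bigl(\eps\,\tfrac{d_b}{d_j}\bigr)\norm{\xi^{(j)}[\varphi]}$. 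That scale matches $|\alpha|\gamma_j/\gamma_b=O(d_j/d_b)$ directly from \eqref{eqn:gamma_d_defns}, so the normalisation you were worried about is not an obstacle.
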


To prove \cref{lem:norm_decrease}, we first use \cref{lem:process_graphs_char} to rewrite the desired discrepancy $ \xi^{(j)}[\mathcal{P}_{j, \ell} \varphi] $ in terms of a weighted sum over the discrepancies $ \xi^{(j)}[\varphi] $, according to the weights of possible clique-paths.
We then use our Ramsey-type statement (\cref{lem:ramsey_full}) to obtain, in each case, an interval over which the clique-path is $ (b, c) $-centred.
This allows us to rewrite the new discrepancy in terms of weighted sums of old discrepancies, in each of which the weights are concentrated around a particular value, using \ref{cond:creg3}.
By the globally-balanced assumption and inductive hypothesis, this implies a cancellation effect in the discrepancies, from which we deduce the desired decrease.

The rest of this section is structured as follows.
Firstly, in \cref{sect:proof_simple_obs}, we present some general observations, then in \cref{sect:proc_graph_char_proof}, we prove \cref{lem:process_graphs_char}, relating our process to weights of clique-paths.
We next prove our \cref{lem:ramsey_full} in \cref{sect:proof_concentrating}, which allows us to find centred subpaths in clique-paths, and combine this with \cref{lem:process_graphs_char} in \cref{sect:char_cent_decomp} to obtain \cref{lem:centered_decomp}, a more useful characterisation of the discrepancies in our process in terms of sets of clique-paths with the same centred subpath.
This gives us the tools we need to prove \cref{lem:norm_decrease} by induction on $ j $ in \cref{sect:proof_lem_decrease}.
Finally, in \cref{sect:proof_final}, we use \cref{lem:norm_decrease} to conclude the proof of \cref{thm:proc_conv_fcd}.

\subsection{Simple observations} \label{sect:proof_simple_obs}

We start this section by providing some naïve bounds and straightforward observations which will be useful in the main proof.
Firstly, it is easy to see that our process preserves the property of being globally-balanced.

\begin{fact} \label{fact:global_balance}
    Let $ \varphi $ be a globally-balanced $ K_r $-function on $ G $.
    Then $ \mathcal{R}_s \varphi $ is also globally-balanced for every $ s \in [k] $.
\end{fact}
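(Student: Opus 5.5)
We need to show that $\sum_{K} \mathcal{R}_s\varphi(K) = \sum_K \varphi(K)$. Equivalently, the total correction
$$\sum_{K \in K_r(G)} \frac{1}{\binom{r-s}{k-s}} \sum_{S \in \binom{V(K)}{s}} \frac{\xi^{(s)}(S)}{|K_r(S)|}$$
vanishes. The plan is to swap the order of summation. For $s \in [k-1]$ the sets $S$ range over $E_s = \binom{V(G)}{s}$; for $s = k$ we restrict to $S \in E(G)$, reading the inner sum over edges of $K$ in that case. Each $S$ then appears once for each $K \in K_r(S)$. The factor $|K_r(S)|$ cancels, and this is legitimate because every such $S$ lies in some clique by the standing assumption on $G$, so $|K_r(S)| \ge 1$. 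The correction therefore equals
$$\binom{r-s}{k-s}^{-1} \sum_{S \in E_s} \xi^{(s)}(S).$$

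It remains to show $\sum_{S \in E_s} \xi^{(s)}(S) = 0$. For $s = k$ this is $\sum_{e \in E(G)} \xi(e)$. For $s < k$ we unfold the definition of $\xi^{(s)}$ and double count: each edge $e$ appears once for each $s$-subset $S \subseteq e$. This gives
$$\sum_{S} \xi^{(s)}(S) = \binom{k}{s} \sum_{e \in E(G)} \xi(e).$$

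Finally we check that $\sum_e \xi(e) = 0$ under global balance. Double counting pairs $(e, K)$ with $e \in E(K)$ gives $\sum_e \varphi(e) = \binom{r}{k} \sum_K \varphi(K)$. Global balance says $\sum_K \varphi(K) = \binom{r}{k}^{-1}|E(G)|$, so $\sum_e \varphi(e) = |E(G)|$, i.e. $\sum_e \xi(e) = 0$.

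Hence the correction vanishes and $\mathcal{R}_s\varphi$ is globally-balanced. There is no real obstacle; the only point needing care is that $|K_r(S)| \neq 0$, which the standing assumption on $G$ guarantees.
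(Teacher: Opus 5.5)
Your proof is correct and follows essentially the same route as the paper: you swap the order of summation so that each $S \in E_s$ is counted $|K_r(S)|$ times, cancelling the normalisation, and then use $\sum_{S \in E_s}\xi^{(s)}(S) = \binom{k}{s}\sum_{e}\xi(e) = 0$. You also spell out why global balance gives $\sum_e \xi(e) = 0$ and why $|K_r(S)| \ge 1$ (via the standing assumption on $G$), both of which the paper takes for granted.
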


\begin{proof}
    Recalling the definition of $ \mathcal{R}_s $ and rearranging the sums, we see that
    \begin{align*}
        \sum_{K \in K_r(G)} \mathcal{R}_s \varphi(K) &= \sum_{K \in K_r(G)} \left( \varphi(K) - \frac{1}{\binom{r - s}{k - s}} \sum_{S \in \binom{V(K)}{s}} \frac{\xi^{(s)}(S)}{|K_r(S)|}\right) \\
            &= \sum_{K \in K_r(G)} \varphi(K) - \frac{1}{\binom{r - s}{k - s}} \sum_{S \in E_s} \xi^{(s)}(S) \frac{|K_r(S)|}{|K_r(S)|}.
            % &= \binom{r}{k}^{-1} |E(G)| - 0,
    \end{align*}
    Noting that
    $$ \sum_{S \in E_s} \xi^{(s)}[\varphi](S) = \sum_{S \in E_s} \sum_{e \in N_G^{\mathrm{e}}(S)} \xi[\varphi](e) = \binom{k}{s} \sum_{e \in E(G)} \xi[\varphi](e) = 0 $$
    for any globally-balanced $ K_r $-function $ \varphi $, it follows that
    $$ \sum_{K \in K_r(G)} \mathcal{R}_s \varphi(K) = \sum_{K \in K_r(G)} \varphi(K) = \frac{|E(G)|}{\binom{r}{k}}, $$
    as required.
\end{proof}

Throughout our proof, we will make frequent use of the following commutativity statement for the operator $ \Xi $ with respect to $ \bullet $, the concatenation of clique-paths defined in \cref{section:clique_paths_def}, and $ \circ $, the matrix multiplication defined in \cref{section:notation}.

\begin{fact} \label{fact:xi_comp_commute}
    Let $ j \in [k] $ and $ \mathbf{F}^1, \mathbf{F}^2 $ be $ j $-clique-paths.
    Then
    $ \Xi[\mathbf{F}^1 \bullet \mathbf{F}^2] = \Xi[\mathbf{F}^1] \circ \Xi[\mathbf{F}^2]. $
\end{fact}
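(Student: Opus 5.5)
The plan is to prove the identity entrywise: fix $S, T \in E_j$ and exhibit a weight-preserving bijection between semi-copies of $\mathbf{F}^1 \bullet \mathbf{F}^2$ rooted at $S$ and $T$ and pairs $(\Phi_1, \Phi_2)$ with $\Phi_1 \in X_G(S, U, \mathbf{F}^1)$ and $\Phi_2 \in X_G(U, T, \mathbf{F}^2)$, where $U$ ranges over $E_j$. Once this bijection is in place, the identity follows by expanding the matrix product:
$$ (\Xi[\mathbf{F}^1] \circ \Xi[\mathbf{F}^2])(S, T) = \sum_{U \in E_j} \sum_{\Phi_1, \Phi_2} w(\Phi_1) w(\Phi_2). $$

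\emph{Construction of the map.} Write $\mathbf{F}^1 \bullet \mathbf{F}^2 = (F', \mathbf{S}')$, with $\ell_1, \ell_2$ the respective lengths. Take a rooted homomorphism $\Phi$ of $F'$, and let $U \coloneqq \Phi(S^1_{\ell_1}) = \Phi(S^2_0)$. Then $U \in E_j$: for $j = k$ it is the image of an edge, and for $j < k$ it is a $j$-set because $\Phi$ is injective on root sets. Restricting $\Phi$ to the copies of $F^1$ and $F^2$ inside $F'$ yields rooted homomorphisms $\Phi_1, \Phi_2$ with the required roots. Order preservation on each root set is inherited, since the root sets of $\mathbf{F}^i$ are exactly root sets of $\mathbf{F}^1 \bullet \mathbf{F}^2$.

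\emph{Inverse map.} Conversely, given $\Phi_1, \Phi_2$ agreeing on $U$, both maps send the shared root set order-preservingly onto $U$, and that map is unique. Hence they agree on the identified vertices. Since $F'$ is the union of $F^1$ and $F^2$ glued only along $S^1_{\ell_1} \cong S^2_0$, they glue to a unique homomorphism of $F'$, because every edge of $F'$ lies in $F^1$ or in $F^2$.

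\emph{Equivalence classes.} Next I check that the correspondence respects the equivalence defining semi-copies. The free vertex sets $V(F'_i) \setminus (S'_{i-1} \cup S'_i)$ of the concatenation are exactly the free vertex sets of the constituent cliques in $\mathbf{F}^1$ and $\mathbf{F}^2$. This is because the junction root set is preserved in both pieces, and the root sequence of the concatenation is the concatenation of the root sequences. Permutations within these sets therefore correspond to pairs of such permutations, and the map descends to a bijection on semi-copies. Equivalently, one can use the identification of a semi-copy with its sequence of image cliques and image root sets, which splits at index $\ell_1$ into the corresponding data for $\Phi_1$ and $\Phi_2$; this is perhaps the cleanest formulation.

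\emph{Weights.} Finally, $w$ is multiplicative. The intersection sequence of the concatenation is $\mathbf{s}(F') = (\mathbf{s}(F^1), \mathbf{s}(F^2))$, which needs a quick check at the junction index: $F'_{\ell_1+1}$ is the first clique of $F^2$, and the term indexed $\ell_1$ in $\mathbf{F}^1$ uses $F_{\ell_1+1}$, the clique on $S^1_{\ell_1}$, so the intersection $V(F_{\ell_1} \cap F_{\ell_1+1})$ is computed inside $F^1$ in both cases. The product defining $w$ therefore splits as $w(\Phi) = w(\Phi_1) w(\Phi_2)$, with the $\ell = 0$ conventions handled trivially.

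The main obstacle is purely bookkeeping. I need to verify that the equivalence relation on homomorphisms, the convention for $F_{\ell+1}$, and the ordering conventions all match up across the junction. No pseudorandomness input is needed.
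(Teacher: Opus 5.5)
Your proposal is correct and follows the same route as the paper. You fix $S,T$ and send a semi-copy of $\mathbf{F}^1 \bullet \mathbf{F}^2$ to the pair of its restrictions to $F^1$ and $F^2$, with $U$ the image of the junction root set; this is well defined and injective on equivalence classes because the free vertex sets of the concatenation are exactly those of the two pieces, gluing along the unique order-preserving map onto $U$ gives surjectivity, and $w$ factorises. Your explicit checks that $U \in E_j$ and that the intersection size at the junction index agrees are details the paper leaves implicit.
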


Given $ S, T \in E_j $, the function on the left hand side is a weighted sum over all copies of the concatenation $ \mathbf{F}^1 \bullet \mathbf{F}^2 $ rooted at $ S $ and $ T $, whereas the right hand side is a weighted sum over all pairs of copies of $ \mathbf{F}^1 $ rooted at $ S $ and $ U $ and $ \mathbf{F}^2 $ rooted at $ U $ and $ T $, for any $ U \in E_j $.
It is intuitively clear that there is a weight-preserving bijection between these two sets, since clique-paths fix the orderings of root sets, so there is a unique way to concatenate the two paths.

\begin{proof}[Proof of \cref{fact:xi_comp_commute}]
    Consider $ \mathbf{F}^1 = (F^1, \mathbf{S}^1) $ and $ \mathbf{F}^2 = (F^2, \mathbf{S}^2) $ of lengths $ \ell_1 $ and $ \ell_2 $, respectively, and write $ \mathbf{F}^1 \bullet \mathbf{F}^2 = (F', \mathbf{S}') $.
    Given $ S, T \in E_j $, let $ A \coloneqq X_G(S, T, \mathbf{F}^1 \bullet \mathbf{F}^2) $ and $ B \coloneqq \bigcup_{U \in E_j} X_G(S, U, \mathbf{F}^1) \times X_G(U, T, \mathbf{F}^2)$; by definition, it suffices to show that there exists a weight-preserving bijection $ \Psi: A \to B $, that is, such that $ w(\Phi) = w(\Phi_1) w(\Phi_2) $ whenever $ \Psi(\Phi) = (\Phi_1, \Phi_2) $.
    To do this, recall that by definition $ V(F') = V(F^1) \cup V(F^2) $, where the two vertex sets are regarded as intersecting exactly in the (ordered) set $ S^1_{\ell_1} = S^2_0 $.
    Given $ \Phi \in A $, define $ \Phi_1 \coloneqq \Phi \restriction_{V(F^1)} $ and $ \Phi_2 \coloneqq \Phi \restriction_{V(F^2)} $ and take $ U \coloneqq \Phi(S^1_{\ell_1}) $.
    It is thus clear that $ \Phi_1 $ and $ \Phi_2 $ are homomorphisms from $ F^1 $ and $ F^2 $ to $ G $, respectively, preserving the orderings on the root sets.
    As such, $ \Psi(\Phi) \coloneqq (\Phi_1, \Phi_2) \in B $.
    If $ \psi $ is an automorphism of $ \mathbf{F}^1 \bullet \mathbf{F}^2 $ then clearly $ \psi \restriction_{V(F^1)} $ and $ \psi \restriction_{V(F^2)} $ are hypergraph automorphisms of $ F^1 $ and $ F^2 $, respectively, preserving the root sets and their orderings; in particular, the function $ \Psi $ is well-defined.

    Suppose $ \Psi(\Phi) = (\Phi_1, \Phi_2) = (\Phi'_1, \Phi'_2) = \Psi(\Phi') $, then $ \Phi_1, \Phi'_1$ and $ \Phi_2, \Phi'_2 $ differ at most by permutations of vertices not belonging to any root set, so $ \Phi $ and $ \Phi' $ also differ at most by some automorphism of $ \mathbf{F}^1 \bullet \mathbf{F}^2 $; in particular, $ \Psi $ is injective.
    Furthermore, given any $ (\Phi_1, \Phi_2) \in B $, we may define a semi-copy $ \Phi \in A $ of $ \mathbf{F}^1 \bullet \mathbf{F}^2 $ by taking $ \Phi(v) \coloneqq \Phi_q(v) $ for each $ q \in [2] $ and $ v \in V(F^q) $.
    Note that this is well-defined, as $ \Phi_1 $ and $ \Phi_2 $ both map the intersection $ S^2_0 $ to some $ U \in E_j $ in the unique order-preserving way, and $ \Psi(\Phi) = (\Phi_1, \Phi_2) $; in particular, $ \Psi $ is surjective.
    It is easy to see that $ w(\Phi) = w(\Phi_1) w(\Phi_2) $, so $ \Psi $ is the required weight-preserving bijection.
\end{proof}

Our next observation gives general estimates for converting between statements about edge and clique counts and discrepancy functions on sets of different sizes.
Given $ j \in [k] $ and a set $ S \subseteq V(G) $ with $ |S| \le j $ write $ E_j(S) \coloneqq \{ T \in E_j: S \subseteq T \} $.
Recall the definitions of $ d_j, \gamma_j $ in \eqref{eqn:gamma_d_defns}.

\begin{fact} \label{fact:uniformity_disc}
    Let $ 0 \le i \le j \le k $ and $ \varphi $ be a $ K_r $-function on $ G $.
    Then
    \begin{enumerate}[\rm (\roman*)]
        \item $ \xi^{(i)}(S) = \binom{k - i}{j - i}^{-1} \sum_{T \in E_j(S)} \xi^{(j)}(T) $ for every $ S \in E_i $. \label{cond:unif_disc_3}
    \end{enumerate}
    If additionally $ G $ satisfies {\rm \ref{cond:creg1}} and {\rm \ref{cond:creg2}} for some $ \eps \in (0, 1) $, then also
    \begin{enumerate}[\rm (\roman*)] \setcounter{enumi}{1}
        \item $ |E_j(S)| = (1 \pm \eps) \binom{j}{i} \frac{|E_j|}{|E_i|} $ for every $ S \in E_i $, so in particular $ |N_G^{\mathrm{e}}(S)| = (1 \pm \eps) d_i $; \label{cond:unif_disc_1}
        \item $ |K_r(S)| = (1 \pm 3 \eps) \gamma_j $ for every $ S \in E_j $ with $ j \in [k] $; \label{cond:unif_disc_2}
        \item $ \norm{\xi^{(i)}[\varphi]} \le (1 + \eps) \frac{d_i}{d_j} \norm{\xi^{(j)}[\varphi]} $. \label{cond:unif_disc_4}
    \end{enumerate}
\end{fact}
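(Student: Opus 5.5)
Part (i) is a double-counting identity, and I will prove it first. Unwinding the definition, $\xi^{(i)}(S) = \sum_{e \in N^{\mathrm{e}}_G(S)} \xi(e)$, while $\sum_{T \in E_j(S)} \xi^{(j)}(T) = \sum_{T \in E_j(S)} \sum_{e \in N^{\mathrm{e}}_G(T)} \xi(e)$. Each edge $e \supseteq S$ appears once for every $j$-set $T$ with $S \subseteq T \subseteq e$, and there are exactly $\binom{k-i}{j-i}$ such $T$. When $j = k$ the only such $T$ is $e$ itself, consistent with $\binom{k-i}{0}=1$ and $E_k = E(G)$. This gives (i).

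For (ii), the case $j = k$ is the claim $|N^{\mathrm{e}}_G(S)| = (1\pm\eps)d_i$. I will derive this from \ref{cond:creg1} by double counting edges through $(k-1)$-sets containing $S$:
$$ (k-i)\,|N^{\mathrm{e}}_G(S)| = \sum_{U \in E_{k-1}(S)} |N^{\mathrm{e}}_G(U)| = (1\pm\eps)\, d_{k-1}\binom{n-i}{k-1-i}. $$
The identity $d_{k-1}\binom{n-i}{k-1-i}/(k-i) = d_i$ follows from $d_j = |E(G)|\binom{k}{j}/\binom{n}{j}$, since $\binom{k}{i}/\binom{n}{i} = \binom{k}{k-1}\binom{n-i}{k-1-i}/\bigl((k-i)\binom{n}{k-1}\bigr)$. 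This binomial identity is routine to check. For $j < k$, $E_j(S)$ is just the family of $j$-sets containing $S$, so $|E_j(S)| = \binom{n-i}{j-i} = \binom{j}{i}\binom{n}{j}/\binom{n}{i}$ holds exactly. When $j = k$ we have $|E_k(S)| = |N^{\mathrm{e}}_G(S)|$ and $\binom{k}{i}|E_k|/|E_i| = d_i$, which matches the computation above.

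For (iii), the case $j = k$ is \ref{cond:creg2} directly. For $j < k$ and $S \in E_j$, I will double count pairs (clique, edge) with $S \subseteq e \subseteq V(K)$:
$$ \binom{r-j}{k-j}|K_r(S)| = \sum_{e \in N^{\mathrm{e}}(S)} |K_r(e)| = (1\pm\eps)^2 d_j\gamma_k, $$
using (ii) and \ref{cond:creg2}. It then remains to check $d_j\gamma_k = \binom{r-j}{k-j}\gamma_j$, which follows from the definitions: both sides equal $|K_r(G)|\binom{r}{k}\binom{k}{j}/|E_j|$, and $\binom{r}{k}\binom{k}{j} = \binom{r}{j}\binom{r-j}{k-j}$. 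Since $(1\pm\eps)^2 \subseteq (1\pm 3\eps)$ for $\eps < 1$, this gives (iii).

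For (iv), I apply (i) to get $|\xi^{(i)}(S)| \le \binom{k-i}{j-i}^{-1}|E_j(S)|\,\norm{\xi^{(j)}}$, and then bound $|E_j(S)|$ using (ii). It remains to verify $\binom{k-i}{j-i}^{-1}\binom{j}{i}|E_j|/|E_i| = d_i/d_j$. This holds because $d_i/d_j = \binom{k}{i}|E_j|/(\binom{k}{j}|E_i|)$ and $\binom{k}{i}\binom{k-i}{j-i} = \binom{k}{j}\binom{j}{i}$.

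No step is a real obstacle. The only care needed is in (ii) for $j=k$, where the codegree bound must be lifted from $(k-1)$-sets to smaller $i$, and in keeping track of the binomial identities.
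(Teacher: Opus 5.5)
Your proof is correct and follows the paper's argument essentially step for step. It uses double counting for (i), lifts \ref{cond:creg1} from $(k-1)$-sets by the same count for (ii), double counts clique--edge pairs with \ref{cond:creg2} for (iii), and combines (i) and (ii) via $\binom{k}{i}\binom{k-i}{j-i}=\binom{k}{j}\binom{j}{i}$ for (iv). The only omission is harmless: your identity $(k-i)|N^{\mathrm{e}}_G(S)| = \sum_{U}|N^{\mathrm{e}}_G(U)|$ needs $i<k$, so the case $i=j=k$ of (ii) should be noted separately, where it holds exactly.
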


\begin{proof}
    For \ref{cond:unif_disc_3}, given $ S \in E_i $, we compute
    $$ \xi^{(i)}(S) = \sum_{e \in N_G^{\mathrm{e}}(S)} \xi(e) = \sum_{e \in N_G^{\mathrm{e}}(S)} \xi(e) \sum_{T \in \binom{e}{j}: S \subseteq T} \binom{k - i}{j - i}^{-1} = \binom{k - i}{j - i}^{-1} \sum_{T \in E_j(S)} \xi^{(j)}(T). $$ % = \binom{k - i}{j - i}^{-1} \sum_{T \in E_j(S)} \sum_{e \in N_G^{\mathrm{e}}(T)} \xi(e) 
    For \ref{cond:unif_disc_1}, given $ S \in E_i $, observe that, if $ j \in [k - 1] $, then
    \begin{equation} \label{eqn:unif_disc_1}
        |E_j(S)| = \binom{n - i}{j - i} = \binom{j}{i} \frac{\binom{n}{j}}{\binom{n}{i}} = \binom{j}{i} \frac{|E_j|}{|E_i|}.
    \end{equation}
    The case $ i = j = k $ is trivial so assume instead $ i < j = k $, then by \ref{cond:creg1}, we have
    \begin{align*}
        |E_k(S)| = |N_G^{\mathrm{e}}(S)| &= \binom{k - i}{k - 1 - i}^{-1} \sum_{T \in E_{k - 1}(S)} |N_G^{\mathrm{e}}(T)|\\
            &= \binom{k - i}{k - 1 - i}^{-1} \binom{k - 1}{i} \frac{|E_{k - 1}|}{|E_i|} \cdot (1 \pm \eps) d_{k - 1}\\
            &= (1 \pm \eps) \frac{\binom{k}{k - 1} \binom{k - 1}{i}}{\binom{k - i}{k - 1 - i}} \frac{|E(G)|}{|E_i|}\\
            &= (1 \pm \eps) \binom{k}{i} \frac{|E_k|}{|E_i|}\\
            &= (1 \pm \eps) d_i.
    \end{align*}
    For \ref{cond:unif_disc_2}, given $ S \in E_j $, using \ref{cond:unif_disc_1} and \ref{cond:creg2} in the second equality, we have
    \begin{align*}
        |K_r(S)| = \binom{r - j}{k - j}^{-1} \sum_{T \in E_{k}(S)} |K_r(T)| &= (1 \pm \eps)^2 \binom{r - j}{k - j}^{-1} \binom{k}{j} \frac{|E_{k}|}{|E_j|} \gamma_{k}\\
            &= (1 \pm 3 \eps) \frac{\binom{k}{j} \binom{r}{k}}{\binom{r - j}{k - j}} \frac{|K_r(G)|}{|E_j|}\\
            &= (1 \pm 3 \eps) \binom{r}{j} \frac{|K_r(G)|}{|E_j|}\\
            &= (1 \pm 3 \eps) \gamma_j.
    \end{align*}
    Using \ref{cond:unif_disc_3} and \ref{cond:unif_disc_1}, we deduce that
    $$ \norm{\xi^{(i)}[\varphi]} \le \binom{k - i}{j - i}^{-1} (1 + \eps) \binom{j}{i} \frac{|E_j|}{|E_i|} \norm{\xi^{(j)}[\varphi]} = (1 + \eps)  \frac{d_i}{d_j} \norm{\xi^{(j)}[\varphi]}, $$
    as required for \ref{cond:unif_disc_4}.
\end{proof}

Next we present a collection of naïve bounds on various relevant quantities, such as the magnitude of changes to a $ K_r $-function and its discrepancy under the distributor process.
We remark that \cref{fact:norm_delta_naive}~\ref{cond:norm_naive_5} is essentially a weaker version of \cref{lem:norm_decrease}, but the former will be required for the proof of the latter.

\begin{fact} \label{fact:norm_delta_naive}
    Suppose $ 1/n \ll 1/C \ll 1/\ell, 1/r $.
    Let $ G $ satisfy {\rm \ref{cond:creg1}} and {\rm \ref{cond:creg2}} for some $ \eps \in (0, \frac{1}{6}) $.
    Then, for any $ j \in [k] $, $ i \in [0, \ell] $, $ \mathbf{s} \in (\mathcal{S}_{j, \ell})^i $, $ m \ge 0 $, $ j $-clique-path $ \mathbf{F} $ of length $ m $, and $ K_r $-function $ \varphi $ on $ G $, the following hold.
    
    \begin{enumerate}[\rm (\roman*)]
        \item $ |\mathcal{G}_{\ell}^{(j)}(\mathbf{s})| \le C $; \label{cond:norm_naive_1}
        \item $ \norm{\Xi[\mathbf{F}]} \le C^m $; \label{cond:norm_naive_3}
        % \item $ \norm{\xi^{(j)}[\mathcal{P}_{j - 1, \ell} \varphi]} \le C \norm{\xi^{(j)}[\varphi]} $; \label{cond:norm_naive_4}
        \item $ \norm{\xi^{(j)}[(\mathcal{R}_{j}^{\ell} \mathcal{P}_{j - 1, \ell})^i \varphi]} \le C \norm{\xi^{(j)}[\varphi]} $; \label{cond:norm_naive_5}
        \item $ \norm{(\mathcal{R}_{j}^{\ell} \mathcal{P}_{j - 1, \ell})^i \varphi - \varphi} \le \frac{C}{\gamma_j} \norm{\xi^{(j)}[\varphi]} $. \label{cond:norm_naive_6}
    \end{enumerate}
\end{fact}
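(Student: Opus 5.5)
Each part is proved directly from the definitions, in the order (i), (ii), (iii), (iv), with $C$ chosen large in terms of $\ell$, $r$, $k$ at the end.

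\emph{Part (i).} This is pure counting. By \cref{lem:process_graphs_char}, each $\mathbf{s}_q\in\mathcal{S}_{j,\ell}$ is a sequence of length at most $(2\ell)^j$ with entries in $[j-1]$. Also $\hat{\mathbf{s}}=(j)^\ell$ and $i\le\ell$. Each set $\mathcal{F}^{(j)}_s$ is finite, with size bounded in terms of $r$ and $j$ only: it is determined by the intersection pattern $S_0\cap S_1$ inside $V(F_1)$ and the compatible orderings. Hence $\mathcal{G}^{(j)}_\ell(\mathbf{s})$ is a concatenation of at most $\ell^{3k}$ such sets. Its size is at most $M^{\ell^{3k}}$ for some $M=M(r,k)$, and this is at most $C$ since $1/C\ll 1/\ell,1/r$.

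\emph{Part (ii).} By \cref{fact:xi_comp_commute}, $\Xi[\mathbf{F}]$ is the matrix product of the $m$ length-one factors. The norm $\norm{\cdot}$ (maximum row sum) is submultiplicative, so it suffices to treat $m=1$ with constant $C$. Fix $S\in E_j$ and consider semi-copies of $\mathbf{F}\in\mathcal{F}^{(j)}_s$ rooted at $S$. Such a semi-copy is determined by three choices:
\begin{itemize}
\item the image clique $K\in K_r(S)$;
\item the image $U$ of $V(F_1)\cap S_1$, a set of size $s$ inside $K$ (at most $\binom{r}{s}$ choices);
\item an extension of $U$ to the endpoint $T$, which must lie in $E_j(U)$.
\end{itemize}
The weight of each semi-copy has absolute value $|\hat w_s|/|K_r(U)|$. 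Summing over $T$ gives at most $|E_j(U)|\cdot\binom{r}{s}$ per clique. This gives the row bound
\[
|K_r(S)|\binom{r}{s}\frac{|E_j(U)|}{|K_r(U)|}.
\]
By \cref{fact:uniformity_disc}~(ii),(iii) this is $O\bigl(\gamma_j\,|E_j|/(|E_s|\gamma_s)\bigr)$, and using the definition \eqref{eqn:gamma_d_defns} of $\gamma$ this equals $O(1)$. This is the step I expect to need the most care. The issue is to check that the $|E_j(U)|$ term matches the $|K_r(U)|$ normalisation, which holds up to the $(1\pm3\eps)$ factors with $\eps<1/6$. When $j=k$, $T$ must be an edge containing $U$ together with the extra vertices of $T$ outside $K$, so $|E_j(U)|$ is exactly the right count; for $j<k$ the count is exact.

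\emph{Part (iii).} \Cref{lem:process_graphs_char}~(ii) gives
\[
\xi^{(j)}[(\mathcal{R}_j^\ell\mathcal{P}_{j-1,\ell})^i\varphi]=\langle\Xi[\mathcal{G}^{(j)}_{\ell,i}],\xi^{(j)}[\varphi]\rangle .
\]
Hence its norm is at most $\norm{\Xi[\mathcal{G}^{(j)}_{\ell,i}]}\,\norm{\xi^{(j)}[\varphi]}$. The multiset $\mathcal{G}^{(j)}_{\ell,i}$ has size at most $2^{(2\ell)^j\ell}$ times the bound from (i). Each of its paths has length at most $\ell^{3k}$, so by (ii) each contributes at most $C_0^{\ell^{3k}}$, where $C_0$ is the constant from (ii). Taking $C$ larger than the product of these bounds gives (iii).

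\emph{Part (iv).} I telescope over the individual one-step operators $\mathcal{R}_{s}$, $s\le j$, making up $(\mathcal{R}_j^\ell\mathcal{P}_{j-1,\ell})^i$; there are at most $\ell^{3k}$ of them. For a single step, directly from the definition,
\[
\norm{\mathcal{R}_s\psi-\psi}\le \binom{r}{s}\frac{\norm{\xi^{(s)}[\psi]}}{\min_S|K_r(S)|}\le O\!\left(\frac{\norm{\xi^{(s)}[\psi]}}{\gamma_s}\right).
\]
By \cref{fact:uniformity_disc}~(iv), $\norm{\xi^{(s)}[\psi]}\le 2(d_s/d_j)\norm{\xi^{(j)}[\psi]}$. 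Using \eqref{eqn:gamma_d_defns}, the ratio $d_s/(d_j\gamma_s)$ equals a constant times $1/\gamma_j$. It remains to bound $\norm{\xi^{(j)}[\psi]}$ at each intermediate stage $\psi$ by $O(\norm{\xi^{(j)}[\varphi]})$. For this I apply (iii), and its analogue for the partial products, to each prefix of the operator word. Every prefix is itself a composition whose $j$-discrepancy action is given by a bounded-length clique-path multiset, as in the proof of \cref{lem:process_graphs_char}, so (ii) applies to it. Summing the at most $\ell^{3k}$ increments gives (iv) after enlarging $C$.
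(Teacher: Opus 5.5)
Your proposal is correct and follows essentially the same route as the paper: parts (i)--(iii) match its proof step for step, including the check that $|K_r(S)|\,|E_j(U)|/|K_r(U)| = O(1)$, which the paper phrases as $2(C')^2 d_s\gamma_j/d_j \le (C')^3\gamma_s$; part (iv) uses the same telescoping over single $\mathcal{R}_s$ steps. The only difference is how you control the intermediate $j$-discrepancies: the paper uses the elementary one-step estimate $\norm{\xi^{(j)}[\mathcal{R}_s\varphi]} \le \norm{\xi^{(j)}[\varphi]} + 4d_j\gamma_k\norm{\mathcal{R}_s\varphi-\varphi}$ (from $|N_G^{\mathrm{e}}(S)|\le 2d_j$ and $|K_r(e)|\le 2\gamma_k$), whereas you apply the matrix representation of \cref{fact:process_graphs_onestep} to each prefix together with (ii), and both give the required bounded growth.
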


\begin{proof}
    Introduce a new constant $ C' > 0 $ satisfying $ 1/C \ll 1/C' \ll 1/\ell, 1/r $.
    
    For \ref{cond:norm_naive_1}, recall the definition of $ \mathcal{F}_s^{(j)} $ from \cref{section:clique_paths_def} and note for any $ s \in [j] $ that clearly
    \begin{equation} \label{eqn:norm_naive_-1}
        |\mathcal{F}_s^{(j)}| \le C',
    \end{equation}
    since this is a bound on the number of $ j $-clique-paths of length 1.
    Recalling \cref{lem:process_graphs_char}, since $ |\mathbf{s}_q| \le (2 \ell)^j \le C' $ for each $ q \in [i] $, we see by definition that $ |\mathcal{G}_{\ell}^{(j)}(\mathbf{s})| \le (C')^{i (\ell + C')} \le C $.
    
    For \ref{cond:norm_naive_3}, recall from \cref{section:clique_paths_def} that any $j$-clique-path $ \mathbf{F} $ of length $ m \ge 0 $ may be written as a concatenation $ \mathbf{F} = \mathbf{F}^1 \bullet \ldots \bullet \mathbf{F}^m $ of $j$-clique-paths $ \mathbf{F}^i $ each of length 1.
    By \cref{fact:xi_comp_commute}, this means that $ \norm{\Xi[\mathbf{F}]} = \norm{\Xi[\mathbf{F}^1] \circ \cdots \circ \Xi[\mathbf{F}^m]} \le \norm{\Xi[\mathbf{F}^1]} \cdots \norm{\Xi[\mathbf{F}^m]} $, so it suffices to show that any clique-path $ \mathbf{F}' $ of length 1 has $ \norm{\Xi[\mathbf{F}']} \le C $.
    Indeed, recall that
    $$ \sum_{T \in E_j} |\Xi[\mathbf{F}'](S, T)| \le \sum_{T \in E_j} \sum_{\Phi \in X_G(S, T, \mathbf{F}')} |w(\Phi)|. $$
    By \cref{fact:uniformity_disc}~\ref{cond:unif_disc_2}, letting $ s \coloneqq s_1(\mathbf{F}') $, we see that $ |w(\Phi)| \le \frac{2}{\gamma_s} $ for any $ T \in E_j $ and $ \Phi \in X_G(S, T, \mathbf{F}') $.
    We will now show that $ \sum_{T \in E_j} |X_G(S, T, \mathbf{F}')| \le (C')^3 \gamma_s $, which clearly suffices.
    Indeed, this sum represents the total number of semi-copies of $ \mathbf{F}' $ rooted at $ S $ and any $ T \in E_j $, each of which consists of a copy of $ K_r $ containing $ S $, as well as $ j - s $ vertices, which form an edge with some $ s $ vertices of the clique in the case $ j = k $. 
    By \cref{fact:uniformity_disc}~\ref{cond:unif_disc_2}, the total number of cliques containing $ S $ is at most $ 2 \gamma_j $.
    There are at most $ C' $ choices for a set $ U $ of $ s $ vertices in the clique.
    Then by \cref{fact:uniformity_disc}~\ref{cond:unif_disc_1}, there are at most $ C' \frac{d_s}{d_j} $ choices for $ T $.
    Hence in total the sum is at most $ 2 (C')^2 \frac{d_s \gamma_j}{d_j} \le (C')^3 \gamma_s $, meaning that
    \begin{equation} \label{eqn:norm_naive_0}
        \norm{\Xi[\mathbf{F}]} \le (C')^{4m},
    \end{equation}
    which suffices for \ref{cond:norm_naive_3}.

    By \cref{lem:process_graphs_char}~\ref{cond:proc_graph_char_2}, we have $ \xi^{(j)}[(\mathcal{R}_{j}^{\ell} \mathcal{P}_{j - 1, \ell})^i \varphi] = \langle \Xi[\mathcal{G}_{\ell, i}^{(j)}], \xi^{(j)}[\varphi] \rangle $.
    For each $ \mathbf{s} \in \mathcal{S}_{j, \ell} $, recall that $ \mathcal{F}_{\mathbf{s}}^{(j)} = \mathcal{F}_{s_1}^{(j)} \bullet \ldots \bullet \mathcal{F}_{s_t}^{(j)} $ for $ t \coloneqq |\mathbf{s}| \le (2 \ell)^j $, so by~\eqref{eqn:norm_naive_-1} we have $ |\mathcal{F}_{\mathbf{s}}^{(j)}| \le (C')^{(2 \ell)^j} $, and similarly note that $ |\mathcal{F}_{\hat{\mathbf{s}}}| \le (C')^{\ell} $.
    As such, we see that $ |\mathcal{G}_{\ell, i}^{(j)}| \le 2^{(2\ell)^j i} \cdot (C')^{(2 \ell)^j i} \cdot (C')^{\ell i} \le 2^{C'} $.
    Also, by \cref{lem:process_graphs_char}, every $j $-clique-path in $ \mathcal{G}_{\ell, i}^{(j)} $ has length at most $ \ell^{3k} $.
    As such, we conclude by \eqref{eqn:norm_naive_0} that $ \norm{\Xi[\mathcal{G}_{\ell, i}^{(j)}]} \le 2^{C'} \cdot (C')^{4 \ell^{3k}} \le C $, from which \ref{cond:norm_naive_5} follows.

    For \ref{cond:norm_naive_6}, note that
    \begin{equation} \label{eqn:norm_naive_1}
        \norm{\mathcal{R}_s \varphi - \varphi} \le \frac{1}{\binom{r - s}{k -s}}\binom{r}{s} \frac{2 \norm{\xi^{(s)}[\varphi]}}{\gamma_s} \le \frac{C'}{\gamma_j} \norm{\xi^{(j)}[\varphi]}
    \end{equation}
    for any $ s \in [j] $, where the first inequality follows from the definition of $ \mathcal{R}_s $ using \cref{fact:uniformity_disc}~\ref{cond:unif_disc_2}, and the second inequality uses \cref{fact:uniformity_disc}~\ref{cond:unif_disc_4}.
    Observe, by the definition of $ \xi $, that $ \xi[\psi] - \xi[\varphi] = \psi - \varphi $ as functions $ E(G) \to \mathbb{R} $ for any $ K_r $-functions $ \varphi, \psi $, which means in particular that $ \xi^{(j)}[\psi] =\xi^{(j)}[\varphi] + \psi - \varphi $ as functions $ E_j \to \mathbb{R} $.
    Since $ |N_G^{\mathrm{e}}(S)| \le 2 d_j $ for every $ S \in E_j $ by \cref{fact:uniformity_disc}~\ref{cond:unif_disc_1} and $ |K_r(e)| \le 2 \gamma_k $ for every $ e \in E(G) $ by \ref{cond:creg2}, it follows that
    \begin{align*}
        \norm{\xi^{(j)}[\mathcal{R}_s \varphi]} &\le \norm{\xi^{(j)}[\varphi]} + 2 d_j \cdot 2 \gamma_k \cdot \norm{\mathcal{R}_s \varphi - \varphi}\\
            &\le \norm{\xi^{(j)}[\varphi]} + 2 d_j \cdot 2 \gamma_k \cdot \frac{C'}{\gamma_j} \norm{\xi^{(j)}[\varphi]}
            \le (C')^2 \norm{\xi^{(j)}[\varphi]}.
    \end{align*}
    Given $ m \ge 0 $ and a sequence $ \mathbf{s} \in [j]^m $, writing $ \mathcal{R}_{\mathbf{s}} \coloneqq \mathcal{R}_{s_m} \cdots \mathcal{R}_{s_1} $, it follows by induction that
    \begin{equation} \label{eqn:norm_naive_2}
        \norm{\xi^{(j)}[\mathcal{R}_{\mathbf{s}} \varphi]} \le (C')^{2m} \norm{\xi^{(j)}[\varphi]}.
    \end{equation}
    By definition, it is clear that we may write $ \mathcal{P}_{j - 1, \ell} = \mathcal{R}_{\mathbf{s}'} $ for some $ \mathbf{s}' \in [j]^{m'} $, where $ m' \le C' $ by the fact that $ 1/C' \ll 1/\ell, 1/r $.
    As such, we see that $ (\mathcal{R}_{j}^{\ell} \mathcal{P}_{j - 1, \ell})^i = \mathcal{R}_{\mathbf{s}} $ for some $ \mathbf{s} \in [j]^m $, where $ m \le i (C' + \ell) \le 2 C' \ell $.
    Using \eqref{eqn:norm_naive_1} and \eqref{eqn:norm_naive_2} in the second inequality, it follows that
    \begin{align*}
        \norm{(\mathcal{R}_{j}^{\ell} \mathcal{P}_{j - 1, \ell})^i \varphi - \varphi} &\le \sum_{q \in [m]} \norm{\mathcal{R}_{\mathbf{s}_{[q]}} \varphi - \mathcal{R}_{\mathbf{s}_{[q - 1]}} \varphi} \\
        &\le 2 C' \ell \cdot \frac{C'}{\gamma_j} \cdot (C')^{2 m} \norm{\xi^{(j)}[\varphi]}
        \le \frac{C}{\gamma_j} \norm{\xi^{(j)}[\varphi]},
    \end{align*}
    as required.
\end{proof}

Our next observation is somewhat more technical to formulate, but the statement is intuitive: if $ \chi: E_j^2 \to \mathbb{R} $ can be defined by $ \chi(S, T) \coloneqq \hat{\chi}(S) \mathds{1}[J \subseteq \iota(S \cap T, S)] $ for some function $ \hat{\chi}: E_j \to \mathbb{R} $ and set $ J \in \binom{[j]}{b} $, then the maximum size of $ \langle \chi, \xi^{(j)} \rangle $ can be bounded in terms of the discrepancy $ \xi^{(b)} $ on $ b $-sets.

\begin{fact} \label{fact:func_intersect_bound}
    Let $ 0 \le b < j \le k $ and $ \varphi $ be a $ K_r $-function on $ G $.
    Let $ \chi: E_j^2 \to \mathbb{R} $ be defined by $ \chi(S, T) \coloneqq \mathds{1}[J \subseteq \iota(S \cap T, S)] \hat{\chi} $ for some set $ J \in \binom{[j]}{b} $ and $ \hat{\chi} \in \mathbb{R} $.
    Then
    $$ \norm{\langle \chi, \xi^{(j)}[\varphi] \rangle} \le 2^k |\hat{\chi}| \norm{\xi^{(b)}[\varphi]}. $$
\end{fact}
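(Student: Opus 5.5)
Fix $ S \in E_j $ and write $ U \coloneqq \{ x_i : i \in J \} \subseteq S $ for the $ b $-subset of $ S $ at the positions indexed by $ J $ in the ordering of $ S $. The plan is to compute $ \langle \chi, \xi^{(j)} \rangle(S) $ directly. By definition,
$$ \langle \chi, \xi^{(j)}[\varphi] \rangle(S) = \hat{\chi} \sum_{T \in E_j} \mathds{1}[J \subseteq \iota(S \cap T, S)]\, \xi^{(j)}(T). $$
The condition $ J \subseteq \iota(S \cap T, S) $ says exactly that $ U \subseteq S \cap T $. Since $ U \subseteq S $ always holds, this is equivalent to $ U \subseteq T $. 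The sum is therefore $ \hat{\chi} \sum_{T \in E_j(U)} \xi^{(j)}(T) $.

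Now apply \cref{fact:uniformity_disc}~\ref{cond:unif_disc_3} with $ i = b $ and the set $ U \in E_b $. (Note $ b < j \le k $, so $ E_b = \binom{V(G)}{b} $ and $ U \in E_b $ automatically.) This gives
$$ \sum_{T \in E_j(U)} \xi^{(j)}(T) = \binom{k - b}{j - b} \xi^{(b)}(U). $$
Hence $ |\langle \chi, \xi^{(j)} \rangle(S)| \le \binom{k-b}{j-b} |\hat{\chi}| \, |\xi^{(b)}(U)| \le 2^{k} |\hat{\chi}| \norm{\xi^{(b)}[\varphi]} $, using $ \binom{k-b}{j-b} \le 2^{k-b} \le 2^k $. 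Taking the maximum over $ S $ gives the claim.

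There is no real obstacle here. The one point needing care is the translation of the $ \iota $-indicator into the containment $ U \subseteq T $. This is where the ordering on $ S $ enters: $ J $ picks out a fixed $ b $-subset of $ S $, independent of $ T $. The rest is the identity in \cref{fact:uniformity_disc}~\ref{cond:unif_disc_3}, which needs no regularity assumptions, so the fact holds for arbitrary $ G $.
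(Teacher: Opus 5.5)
Your proposal is correct and follows essentially the same route as the paper: you identify the unique $b$-set $U \subseteq S$ with $\iota(U, S) = J$, so that $\chi(S, T) = \hat{\chi}\,\mathds{1}[U \subseteq T]$, and then apply \cref{fact:uniformity_disc}~\ref{cond:unif_disc_3} to get $\binom{k-b}{j-b}\,\xi^{(b)}[\varphi](U)\,\hat{\chi}$. Your extra remarks, that $\binom{k-b}{j-b} \le 2^k$ and that no regularity assumption on $G$ is needed, are accurate and simply spell out what the paper calls immediate.
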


\begin{proof}
    Given $ S \in E_j$, observe that there exists a unique set $ U \subseteq S $ (of size $ b $) with $ \iota(U, S) = J $.
    Hence, using \cref{fact:uniformity_disc}~\ref{cond:unif_disc_3}, we may write
    $$ \langle \chi, \xi^{(j)}[\varphi] \rangle(S)
        = \sum_{T \in E_j} \xi^{(j)}[\varphi](T) \chi(S, T)
        = \sum_{T \in E_j(U)} \xi^{(j)}[\varphi](T) \hat{\chi}
        = \binom{k - b}{j - b} \xi^{(b)}[\varphi](U) \hat{\chi}, $$
    from which the desired bound follows immediately.
\end{proof}

In order to prove non-negativity of our process, it will be helpful to observe that clique-regularity ensures that the (unique) globally-balanced uniform $ K_r $-function already has small initial discrepancies, since the number of cliques containing each edge is well-concentrated.

\begin{fact} \label{fact:uniform_initial_small}
    Let $ G $ satisfy {\rm \ref{cond:creg2}} for some $ \eps > 0 $.
    Let $ \varphi $ be the globally-balanced uniform $ K_r $-function on $ G $.
    Then $ \norm{\xi[\varphi]} \le \eps $.
\end{fact}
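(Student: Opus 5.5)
The statement is a direct computation. The plan is to identify the constant value of $\varphi$ and then read off $\xi(e)$ using \ref{cond:creg2}.

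First, since $\varphi$ is uniform and globally-balanced, it takes a constant value $w$ on every clique. Global balance gives $w\,|K_r(G)| = \binom{r}{k}^{-1}|E(G)|$, so
$$ w = \frac{|E(G)|}{\binom{r}{k}|K_r(G)|} = \frac{1}{\gamma_k}, $$
using the definition $\gamma_k = |K_r(G)|\binom{r}{k}/|E_k|$ from \eqref{eqn:gamma_d_defns} with $E_k = E(G)$. This matches the remark in the introduction that every clique receives weight $1/\gamma$, where $\gamma$ is the average number of cliques containing an edge.

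Next, for each $e \in E(G)$ we have $\varphi(e) = \sum_{K \in K_r(e)} \varphi(K) = |K_r(e)|/\gamma_k$. Condition \ref{cond:creg2} says $|K_r(e)| = (1 \pm \eps)\gamma_k$, so $\varphi(e) = 1 \pm \eps$. Hence $|\xi(e)| = |\varphi(e) - 1| \le \eps$ for every edge $e$, and taking the maximum over $e$ gives $\norm{\xi[\varphi]} \le \eps$.

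There is no real obstacle here. The only point needing care is that the normalisation in $\gamma_k$ exactly matches the globally-balanced total weight, which the double count above confirms. Implicitly $|K_r(G)|>0$, which holds because $G$ is assumed throughout to have every edge in some copy of $K_r$.
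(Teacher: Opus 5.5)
Your proof is correct and follows the same route as the paper: identify the uniform weight as $1/\gamma_k = |E(G)|/(\binom{r}{k}|K_r(G)|)$, then read off $\xi[\varphi](e) = |K_r(e)|/\gamma_k - 1 = \pm\eps$ directly from \ref{cond:creg2}.
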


\begin{proof}
    Recall firstly that $ \varphi(K) = \frac{1}{\gamma_k} = \frac{|E(G)|}{|K_r(G)| \binom{r}{k}} $ for every $ K \in K_r(G) $.
    Given $ e \in E_k $, using {\rm \ref{cond:creg2}}, we obtain
    \begin{align*}
        \xi[\varphi](e) = \left( \sum_{K \in K_r(e)} \varphi(K) \right) - 1
            = \frac{|K_r(e)|}{\gamma_k} - 1
            = \frac{(1 \pm \eps) \gamma_k}{\gamma_k} - 1
            = \pm \eps,
    \end{align*}
    as required.
\end{proof}

\subsection{Clique-path characterisation} \label{sect:proc_graph_char_proof}

We now prove \cref{lem:process_graphs_char}, which relies on the following observation, allowing us to relate the effect of the distributor process on the discrepancies of a $ K_r $-function to the weight of suitable clique-paths in $ G $.
Throughout this section, let $ \varphi $ be a $ K_r $-function on $ G $.

\begin{fact} \label{fact:process_graphs_onestep}
    Let $ 1 \le s < j \le k $.
    Then
    $ \xi^{(j)}[\mathcal{R}_s \varphi] = \langle \mathbf{1} + \Xi[\mathcal{F}^{(j)}_s], \xi^{(j)}[\varphi] \rangle $
    and
    $ \xi^{(j)}[\mathcal{R}_j \varphi] = \langle \Xi[\mathcal{F}^{(j)}_j], \xi^{(j)}[\varphi] \rangle $.
\end{fact}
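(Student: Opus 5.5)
Fix $S \in E_j$. The plan is to expand both sides directly from the definitions and match them term by term through a bijection with semi-copies of length-one clique-paths. Write $\psi \coloneqq \mathcal{R}_s \varphi$. As noted in the proof of \cref{fact:norm_delta_naive}, $\xi^{(j)}[\psi](S) - \xi^{(j)}[\varphi](S) = \sum_{e \in N^{\mathrm{e}}_G(S)} (\psi(e) - \varphi(e))$. Substituting $\psi(e) - \varphi(e) = \sum_{K \in K_r(e)} (\psi(K) - \varphi(K))$ and using the definition of $\mathcal{R}_s$ gives
$$ \xi^{(j)}[\psi](S) - \xi^{(j)}[\varphi](S) = -\frac{1}{\binom{r - s}{k - s}} \sum_{e \in N^{\mathrm{e}}_G(S)} \sum_{K \in K_r(e)} \sum_{U \in \binom{V(K)}{s}} \frac{\xi^{(s)}[\varphi](U)}{|K_r(U)|}. $$
The pairs $(e, K)$ with $S \subseteq e \subseteq V(K)$ correspond to cliques $K \in K_r(S)$ together with $\binom{r - j}{k - j}$ choices of $e$, so the inner sum becomes $\binom{r - j}{k - j} \sum_{K \in K_r(S)} \sum_{U \in \binom{V(K)}{s}} \xi^{(s)}[\varphi](U)/|K_r(U)|$.

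Next, rewrite $\xi^{(s)}[\varphi](U)$ via \cref{fact:uniformity_disc}~\ref{cond:unif_disc_3} as $\binom{k - s}{j - s}^{-1} \sum_{T \in E_j(U)} \xi^{(j)}[\varphi](T)$. The change at $S$ then becomes a sum over triples $(K, U, T)$ with $S \subseteq V(K)$, $U \in \binom{V(K)}{s}$, $U \subseteq T \in E_j$, each carrying the coefficient $\xi^{(j)}[\varphi](T)/|K_r(U)|$. The key step is to show that these triples biject with $\bigcup_T X_G(S, T, \mathbf{F})$ over $\mathbf{F} \in \mathcal{F}^{(j)}_s$. However, a length-one clique-path needs $|V(F_1) \cap S_1| = s$ exactly, whereas a triple only guarantees $U \subseteq V(K) \cap T$. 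I would therefore take $U$ to be the image of $V(F_1) \cap S_1$ in the semi-copy. Since semi-copies may be non-injective, extra vertices of $T \setminus U$ may still land inside $K$; this is exactly the freedom that homomorphisms allow, and the isomorphism type of $\mathbf{F}$ records how $S_0 \cap S_1$ sits and which orderings are used. So a triple $(K, U, T)$ determines a unique $\mathbf{F} \in \mathcal{F}^{(j)}_s$ (read off from the pattern $S \cap T$, $U$, and the order positions) and a unique semi-copy. Conversely, a semi-copy yields $(\Phi(F_1), \Phi(V(F_1) \cap S_1), \Phi(S_1))$.

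The constants then need to match $\hat w_s = -\binom{r - s}{j - s}^{-1}$. The overall factor is $\binom{r - j}{k - j} / \bigl(\binom{r - s}{k - s} \binom{k - s}{j - s}\bigr)$, and the identity $\binom{r - s}{k - s}\binom{k - s}{j - s} = \binom{r - s}{j - s}\binom{r - j}{k - j}$ reduces it to $\binom{r - s}{j - s}^{-1}$, as required. For $s < j$ this gives $\xi^{(j)}[\psi] = \xi^{(j)}[\varphi] + \langle \Xi[\mathcal{F}^{(j)}_s], \xi^{(j)}[\varphi] \rangle$, i.e.\ the first identity with $\mathbf{1}$ the identity matrix.

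For $s = j$ the same computation applies with $T = U$ and $\hat w_j = -1$. Here I would check that the triples $(K, U)$ with $U \in \binom{V(K)}{j} \cap E_j$ include $U = S$ itself. This term contributes $-\xi^{(j)}[\varphi](S)$ (the sum over $K \in K_r(S)$ of $1/|K_r(S)|$) and cancels the identity. That cancellation should be the reason there is no $\mathbf{1}$. It requires that the length-one path with $S_1 = S_0$ belongs to $\mathcal{F}^{(j)}_j$, which seems to conflict with the ``pairwise distinct'' requirement on root sets. I expect this, together with the bookkeeping of isomorphism classes versus orderings, to be the main obstacle. I would first try to check whether distinctness is meant to allow this case (or whether $S_0 = S_1$ is realised non-injectively). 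Failing that, I would treat the diagonal term separately and absorb it into the identity, so that $\mathbf{1}$ cancels precisely against the $U = S$ contribution.
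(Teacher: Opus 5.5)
Your route is the paper's. You expand $\xi^{(j)}[\mathcal{R}_s\varphi](S)$, convert $\xi^{(s)}$ into $\xi^{(j)}$, regroup over triples $(K,U,T)$, match these with semi-copies of length-one clique-paths, and use $\binom{r-s}{k-s}\binom{k-s}{j-s}=\binom{r-s}{j-s}\binom{r-j}{k-j}$. The paper packages the matching as the formula $\Xi[\mathcal{F}^{(j)}_s](S,T)=-\binom{r-s}{j-s}^{-1}\sum_{U\in\binom{T}{s}}|K_r(S\cup U)|/|K_r(U)|+\mathds{1}[s=j,\,S=T]$.

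\textbf{The case $s=j$.} Your worry here is backwards: distinctness of root sets is exactly what you need, and your fallback is the correct argument.
\begin{itemize}
\item The diagonal term $T=S$ of the expansion equals $-\xi^{(j)}[\varphi](S)$ and cancels the identity.
\item The remaining sum over $T\neq S$ is precisely $\langle\Xi[\mathcal{F}^{(j)}_j],\xi^{(j)}[\varphi]\rangle(S)$, because no semi-copy of a path in $\mathcal{F}^{(j)}_j$ is rooted at $(S,S)$.
\item The reason is that a homomorphism of $K^{(k)}_r$ is injective: every $k$-subset is an edge and must map to a $k$-set. Here $S_0\neq S_1$ both lie in $V(F_1)$, so their images differ.
\end{itemize}
If a path with $S_1=S_0$ were admitted, the diagonal would sit inside $\Xi$ and the $\mathbf{1}$ would survive, so the conflict you saw does not exist.

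\textbf{Contrast with $s<j$.} Here distinctness is automatic, since $S_1$ has $j-s\geq 1$ vertices outside $V(F_1)\supseteq S_0$. However, those vertices may be mapped onto $S\setminus U$, so semi-copies rooted at $(S,S)$ do exist. The diagonal therefore stays inside $\Xi[\mathcal{F}^{(j)}_s]$, and $\mathbf{1}$ remains.

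\textbf{Reading $\mathbf{F}$ off a triple.} Take the abstract $S_0\cap S_1$ to be the preimage of $S\cap U$, not of $S\cap T$. Vertices of $(T\setminus U)\cap S$ must come from the extra vertices of $S_1$; otherwise $|V(F_1)\cap S_1|$ would exceed $s$.
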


\begin{proof}
    For $ S, T \in E_j $, note that $ \bigcup_{\mathbf{F} \in \mathcal{F}^{(j)}_s} X_G(S, T, \mathbf{F}) $ may be regarded as the multiset consisting of all copies $ F' $ of $ K_r $ in $ G $ whose vertex set contains $ S $ and some set $ S \ne U \subseteq T $ of size $ s $.
    The multiplicity of such an $ F' $ is the number of different choices of $ U $, noting that, since we also consider non-injective homomorphisms, it may be the case that $ |V(F') \cap T| > s $.
    As such, we see that
    \begin{equation} \label{eqn:proc_graph_one_eq1}
        \Xi[\mathcal{F}^{(j)}_s](S, T) = - \frac{1}{\binom{r - s}{j - s}} \sum_{U \in \binom{T}{s}} \frac{|K_r(S \cup U)|}{|K_r(U)|} + \mathds{1}[s = j, S = T],
    \end{equation}
    since in the case that $ s = j $ and $ S = T $, the summand $ U = S $ contributes weight exactly $ -1 $ to the sum, and thus cancels with the indicator function.
    On the other hand, for any $ S \in E_j $, we compute
    \begin{align*}
        \xi^{(j)} [\mathcal{R}_s \varphi](S) %&= \sum_{e \in N_G^{\mathrm{e}}(S)} \sum_{K \in K_r(e)} (\mathcal{R}_s \varphi(K) - 1)\\
            &= \sum_{e \in N_G^{\mathrm{e}}(S)} \left( \sum_{K \in K_r(e)} \left( \varphi(K) - \frac{1}{\binom{r - s}{k - s}} \sum_{U \in \binom{V(K)}{s}} \frac{\xi^{(s)}[\varphi](U)}{|K_r(U)|} \right) - 1 \right)\\
            &= \xi^{(j)}[\varphi](S) - \frac{1}{\binom{r - s}{k - s}} \sum_{e \in N_G^{\mathrm{e}}(S)} \sum_{K \in K_r(e)} \sum_{U \in \binom{V(K)}{s}} \frac{\xi^{(s)}[\varphi](U)}{|K_r(U)|}\\
            &= \xi^{(j)}[\varphi](S) - \frac{1}{\binom{r - s}{k - s}} \sum_{K \in K_r(S)} \binom{r - j}{k - j} \sum_{U \in \binom{V(K)}{s}} \frac{\xi^{(s)}[\varphi](U)}{|K_r(U)|},
    \end{align*}
    where the last equality follows from the fact that every $ K \in K_r(S) $ corresponds to exactly $ \binom{r - j}{k - j} $ pairs $ (e, K) $ with $ e \in N_G^{\mathrm{e}}(S) $ and $ K \in K_r(e) $.
    Now by \cref{fact:uniformity_disc}~\ref{cond:unif_disc_3}, we may write $ \xi^{(s)}[\varphi](U) = \binom{k - s}{j - s}^{-1} \sum_{T \in E_j(U)} \xi^{(j)}[\varphi](T) $ for any $ U \in E_s $.
    In order to rearrange the sums, given a triple $ (K, U, T) $, the conditions $ K \in K_r(S) $, $ U \in \binom{V(K)}{s} $ and $ T \in E_j(U) $ are exactly equivalent to the conditions $ T \in E_j $, $ U \in \binom{T}{s} $, and $ K \in K_r(S \cup U) $.
    As such, writing $ \mathbf{1}'(S, T) \coloneqq \mathds{1}[s \ne j, S = T] $, we may rearrange to obtain
    \begin{align*}
            \xi^{(j)} [\mathcal{R}_s \varphi](S) &= \xi^{(j)}[\varphi](S) - \frac{\binom{r - j}{k - j}}{\binom{r - s}{k - s}\binom{k - s}{j - s}} \sum_{T \in E_j} \xi^{(j)}[\varphi](T) \sum_{U \in \binom{T}{s}} \frac{|K_r(S \cup U)|}{|K_r(U)|}\\
            &= \langle \mathbf{1}' + \Xi[\mathcal{F}^{(j)}_s], \xi^{(j)}[\varphi] \rangle(S),
    \end{align*}
    where the last equality follows from \eqref{eqn:proc_graph_one_eq1}, using the identity $ \frac{\binom{r - j}{k - j}}{\binom{r - s}{k - s}\binom{k - s}{j - s}} = \binom{r - s}{j -s}^{-1} $.
\end{proof}

The proof of \cref{lem:process_graphs_char} is now fairly straightforward.

\begin{proof}[Proof of \cref{lem:process_graphs_char}]
    For \ref{cond:proc_graph_char_1}, note by the inductive definition that the operator $ \mathcal{P}_{j - 1, \ell} $ can be written as a composition $ \mathcal{R}_{s_1} \cdots \mathcal{R}_{s_m} $ for some $ m \coloneqq m_j \ge 0 $ and $ \mathbf{s} \in [j - 1]^{m} $, noting that we take $ m_1 \coloneqq 0 $ and consider the empty composition to be the identity operator.
    Since $ m_j = \ell (\ell + m_{j - 1}) $, it is easy to check inductively that $ m_j \le (2 \ell)^{j} $ for $ j \in [2, k] $.
    It follows inductively from \cref{fact:process_graphs_onestep}, using associativity, that
    $$ \xi^{(j)}[\mathcal{R}_{s_1} \cdots \mathcal{R}_{s_m} \varphi] = \langle (\mathbf{1} + \Xi[\mathcal{F}^{(j)}_{s_1}]) \circ \cdots \circ (\mathbf{1} + \Xi[\mathcal{F}^{(j)}_{s_m}]), \xi^{(j)}[\varphi] \rangle, $$
    again regarding the empty composition as the identity operator in the case $ j = 1 $.
    Thus, expanding the product above into a sum with $ 2^m $ elements, since the identity matrix $ \mathbf{1} $ can be ignored in any product, it is clear that there exists a multiset $ \mathcal{S}_{j, \ell} $ of size (with multiplicity) exactly $ 2^m $ such that
    $$ \xi^{(j)}[\mathcal{P}_{j - 1, \ell} \varphi] = \left\langle \sum_{\mathbf{s} \in \mathcal{S}_{j, \ell}} \Xi[\mathcal{F}^{(j)}_{s_1}] \circ \cdots \circ \Xi[\mathcal{F}^{(j)}_{s_{|\mathbf{s}|}}], \xi^{(j)}[\varphi] \right\rangle, $$
    from which \ref{cond:proc_graph_char_1} follows, using \cref{fact:xi_comp_commute}.\COMMENT{Note that one term in the product is just the identity matrix, and this corresponds to the empty sequence $ \mathbf{s} $ and thus the empty $j$-clique-path, both of which are defined.}

    It is clear from the definition that for all $ \mathbf{s} \in (\mathcal{S}_{j, \ell})^i $ and $ \mathbf{F} \in \mathcal{G}_{\ell}^{(j)}(\mathbf{s}) $, the length of $ \mathbf{F} $ is at most $ i (\ell + (2 \ell)^j) \le (2 \ell)^{k + 1} \le \ell^{3k} $, using here that $ \ell, k \ge 2 $.
    By \cref{fact:process_graphs_onestep}, associativity, and \cref{fact:xi_comp_commute}, we see that
    \begin{equation} \label{eqn:proc_graph_char_1}
        \xi^{(j)}[\mathcal{R}_j^{\ell} \varphi] = \left\langle \Xi[\mathcal{F}^{(j)}_j] \circ \cdots \circ \Xi[\mathcal{F}^{(j)}_j], \xi^{(j)}[\varphi] \right\rangle = \left\langle \Xi[\mathcal{F}^{(j)}_{\hat{\mathbf{s}}}], \xi^{(j)}[\varphi] \right\rangle
    \end{equation}
    for any $ K_r $-function $ \varphi$.
    Using \ref{cond:proc_graph_char_1} and associativity, this implies that
    $$ \xi^{(j)}[\mathcal{R}_j^{\ell} \mathcal{P}_{j - 1, \ell} \varphi] = \left\langle \Xi[\mathcal{F}^{(j)}_{\hat{\mathbf{s}}}] \circ \sum_{\mathbf{s} \in \mathcal{S}_{j, \ell}} \Xi[\mathcal{F}^{(j)}_{\mathbf{s}}], \xi^{(j)}[\varphi] \right\rangle, $$
    and thus, by iterating, that
    $$ \xi^{(j)}[(\mathcal{R}_j^{\ell} \mathcal{P}_{j - 1, \ell})^i \varphi] = \left\langle \sum_{\mathbf{s}_1 \in \mathcal{S}_{j, \ell}} \cdots \sum_{\mathbf{s}_i \in \mathcal{S}_{j, \ell}} \Xi[\mathcal{F}^{(j)}_{\hat{\mathbf{s}}}] \circ \Xi[\mathcal{F}^{(j)}_{\mathbf{s}_1}] \circ \cdots \circ \Xi[\mathcal{F}^{(j)}_{\hat{\mathbf{s}}}] \circ \Xi[\mathcal{F}^{(j)}_{\mathbf{s}_i}], \xi^{(j)}[\varphi] \right\rangle, $$
    from which \ref{cond:proc_graph_char_2} follows immediately by \cref{fact:xi_comp_commute}.
\end{proof}

\subsection{Centred subpaths} \label{sect:proof_concentrating}

In this section we prove that all of our clique-paths contain some centred subpath.
Given $ \ell, i \in \mathbb{N} $, $ j \in [k] $, and $ \mathbf{s} \in (\mathcal{S}_{j, \ell})^i $, write $ M(\mathbf{s}, \ell) \coloneqq i\ell + \sum_{q \in [i]} |\mathbf{s}_q| $.

\begin{lemma} \label{lem:ramsey_full}
    Suppose $ 1/i \ll 1/c, 1/k $, and let $ j \in [k] $ and $ \ell \in \mathbb{N} $.
    Given $ \mathbf{s} \in (\mathcal{S}_{j, \ell})^i $ and $ \mathbf{F} = (F, \mathbf{S}) \in \mathcal{G}^{(j)}_{\ell}(\mathbf{s}) $, there exist $ i_1 \coloneqq i_1(\mathbf{F}) \in [0, i - 1], i_2 \coloneqq i_2(\mathbf{F}) \in [i_1 + 1, i] $, and $ b \coloneqq b(\mathbf{F}) \in [0, j - 1] $ such that the $ j $-clique-path $ \mathbf{F}_{[M(\mathbf{s}_{[i_1]}, \ell), m_2]} $ is $ (b, c) $-centred, writing $ m_2 \coloneqq M(\mathbf{s}_{[i_2 - 1]}, \ell) + \ell $.
    Furthermore, we can choose $ i_1, i_2, b$ in such a way that they are fully determined by $ \mathbf{F}_{[0, m_2]} $ (that is, independent of the rest of $ \mathbf{F} $) and invariant under semi-isomorphisms of $ \mathbf{F}_{[0, m_2]} $.
\end{lemma}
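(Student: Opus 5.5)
The plan is a pigeonhole/Ramsey argument on the sequence of root sets at the block boundaries. The path $\mathbf{F}\in\mathcal{G}^{(j)}_\ell(\mathbf{s})$ splits into $i$ blocks, the $q$-th being $\mathcal{F}^{(j)}_{\hat{\mathbf{s}}}\bullet\mathcal{F}^{(j)}_{\mathbf{s}_q}$. Write $T_q \coloneqq S_{M(\mathbf{s}_{[q]},\ell)}$ for the root set at the end of block $q$ (so $T_0=S_0$), and $T'_q\coloneqq S_{M(\mathbf{s}_{[q-1]},\ell)+\ell}$ for the root set after the $\hat{\mathbf{s}}$-part of block $q$. The candidate subpaths are $\mathbf{F}_{[M(\mathbf{s}_{[i_1]},\ell),\,m_2]}$, running from $T_{i_1}$ to $T'_{i_2}$.

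\textbf{Step 1 (constant sets and potentials).} For a candidate interval, let $B\coloneqq T_{i_1}\cap T'_{i_2}$ and $b\coloneqq|B|$. We need $b<j$ and, for every $x\in T_{i_1}\setminus B$ and $y\in T'_{i_2}\setminus B$, no $B$-avoiding walk of length $<c$ from $x$ to $y$ in the subpath. Note that $b<j$ is automatic. The final $\ell\ge1$ steps of type $\hat{\mathbf{s}}=(j)^\ell$ have $s=j$, so each consecutive root set lies in a common clique. Also, distinct root sets are required to be pairwise distinct, so $T_{i_1}\neq T'_{i_2}$.

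For each vertex $v$ appearing in some root set, consider the set of block indices $q$ at which $v\in T_q$. The key observation is that walk distance in $F$ along a path grows by at most one per clique. So if $x$ leaves the root sets at some time and $y$ enters later, then a short $B$-avoiding walk from $x$ to $y$ forces the "gap" between them to be small. More precisely, the cliques $F_a$ visited between $x$ and $y$ must be few, since vertices of $F$ outside $F_a\cup F_{a+1}$ are separated by intersections with the root sets. The trouble is that vertices in $B$ could shortcut; we handle this by avoiding $B$.

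\textbf{Step 2 (Ramsey/iteration on intervals).} This is the main step. I would define a nested sequence of scales. Start with the interval $[0,i]$, which has length $i$, and set $B_0=\emptyset$. At each stage we have an interval $I$ and a set $B$ of positions in $[j]$, with $|B|$ at most $j$ stages. Ask whether every vertex of the current root sets is either present throughout $I$ (and then put into $B$) or is swapped at least $c$ times, meaning it leaves and is replaced along a chain of length $\ge c$.

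If some vertex is neither, then it is present on a subinterval of length $\ge\mathrm{Len}(I)/(c+1)$ but not on the whole of $I$. In that case we pass to that subinterval, where the vertex becomes constant, so $|B|$ strictly increases. Since $|B|\le j-1$ throughout (the sets are distinct), after at most $k$ iterations we obtain an interval of length at least $i/(c+1)^{k}\ge 1$, using $1/i\ll 1/c,1/k$.

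On the final interval, the vertices of $T_{i_1}\setminus B$ and $T'_{i_2}\setminus B$ are separated by at least $c$ genuine replacements. Any $B$-avoiding walk between them must cross each intermediate clique intersection, which consists of root-set vertices not in $B$. Each clique advances the "age" by at most one, and this gives length $\ge c$. I expect making this walk-length claim rigorous to be the main obstacle. It needs the structural condition $V((F_1\cup\dots\cup F_a)\cap F_{a+1})\subseteq S_a$, which says that later cliques only touch earlier ones through root sets. Using this, I would prove by induction on $a$ that a vertex first appearing in $F_a$ is at $B$-avoiding distance at least (number of complete turnovers of $S\setminus B$ between time $t$ and $a$) from $S_t\setminus B$.

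\textbf{Step 3 (endpoint and determinacy).} The right endpoint is taken at $T'_{i_2}$ rather than $T_{i_2}$, so the decisions above only inspect root sets up to index $m_2$. Because of this, $i_1,i_2,b$ are determined by $\mathbf{F}_{[0,m_2]}$. To make them invariant under semi-isomorphism, I use only the unordered root sets and the hypergraph $F$, never the orderings of the $S_a$ for $a\ge1$. I also break ties canonically, for example by choosing the leftmost qualifying subinterval, with the chosen vertex specified via its position in the ordered set $S_0$ or via its first appearance time.
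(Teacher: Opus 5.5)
Your existence argument follows the paper's route, with two omissions. The nested-interval iteration matches the paper's lemma on non-repeating $j$-sequences: pass to the long presence interval of a non-constant vertex, so the constant set grows, at most $j-1$ times. This needs ``swapped at least $c$ times'' to be read quantitatively, as ``presence interval of length at most $\mathrm{Len}(I)/c$''. The walk bound you flag as the main obstacle is left unproved, but it is true and your separator/turnover induction can be completed. The paper gets it more directly: after shortcutting walk vertices that lie in a single clique, consecutive walk vertices have index intervals with $\min I_{v_{t+1}}\le \max I_{v_t}+1$, so the $d+1$ short intervals must cover all indices. The genuine gap is elsewhere.

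\textbf{The main gap: the ``furthermore'' clause.} You claim the decisions only inspect root sets up to index $m_2$, but this is false. Your iteration starts on $[0,i]$, and each stage depends on where presence intervals end, which can be far beyond $m_2$. For example, suppose that on $\mathbf{F}$ the first stage picks a vertex $v$ with block-presence $[0,q]$ and the second stage stops, so you output $(i_1,i_2)=(0,q)$. A path $\mathbf{F}'$ agreeing with $\mathbf{F}$ on $[0,m_2]$ (so $v\in T'_q$ in both) can keep $v$ in $T_q,\dots,T_{q'}$ for some $q'>q$. Your first stage then passes to $[0,q']$ and the output changes.

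Tie-breaking by the ``leftmost qualifying subinterval'' does not rescue this. Ruling out smaller left endpoints requires examining intervals with larger right endpoints, hence $\mathbf{F}$ beyond $m_2$.

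The fix is to use the iteration only for existence and then select canonically. Among all pairs $(i_1,i_2)$ for which $\mathbf{F}_{[M(\mathbf{s}_{[i_1]},\ell),\,M(\mathbf{s}_{[i_2-1]},\ell)+\ell]}$ is $(b,c)$-centred, with $b$ the size of the intersection of its end root sets, take $(i_2,i_1)$ lexicographically minimal, minimising the right endpoint first. Centredness of a subpath depends only on that subpath, and only through the hypergraph and the unordered root sets. So this choice is a function of $\mathbf{F}_{[0,m_2]}$ and is invariant under semi-isomorphism. This matters: \cref{lem:centered_decomp} uses exactly this to factor each class as $\hat{\mathbf{F}}_1\bullet\hat{\mathcal{F}}_2\bullet\hat{\mathcal{F}}_3$.

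\textbf{A secondary gap: the right endpoint.} Your iteration runs on the block-end sets $(T_q)$ and returns an interval ending at some $T_q$. The lemma needs the subpath to end at $T'_{i_2}$, and you never reconcile the two. The role of $T'_{i_2}$ is that the part of the path after $m_2$ then begins with a $\mathcal{P}_{j-1,\ell}$-block, which is what lets the induction hypothesis apply later; it has nothing to do with determinacy.

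The paper handles this as follows. It applies the Ramsey lemma to the interleaved sequence $(S_{a_q})_{q\in[0,2i]}$, where $a_{2q}=M(\mathbf{s}_{[q]},\ell)$ and $a_{2q+1}=M(\mathbf{s}_{[q]},\ell)+\ell$, asking for $(b,3c)$-spreading. It then shrinks to an interval of the form $[2i_1,2i_2-1]$, which keeps the sequence $(b,2c)$-spreading.

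Alternatively, on your final interval you can check two things directly: that $T_{i_1}\cap T'_{i_2}=B$, and that replacing the last set $T_{i_2}$ by $T'_{i_2}$ changes each presence interval by at most one.
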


This is a Ramsey-type statement: given a sequence of sets, we may find a consecutive subsequence in which every element is either contained in the intersection of the whole subsequence, or not contained in the intersection of any sufficiently long subsequence of the subsequence; to formalise this idea, we make a further definition.
Given $ j \in [0, k] $ and $ m \in \mathbb{N} $, define a \emph{non-repeating $ j $-sequence of length $ m $} to be a sequence $ \mathbf{T} = (T_i)_{i \in [0, m]} $ of sets of size at most $ j $, such that the \emph{vertex index set $ I_v \coloneqq \{ q \in [0, m]: v \in T_q \} $ of $v$ in $ \mathbf{T}$} is an interval for every $ v \in V(\mathbf{T}) \coloneqq \bigcup_{q = 0}^{m} T_q $.
Observe that, for any $j$-clique-path $ \mathbf{F} = (F, \mathbf{S}) $ of length $ m $, the sequence $ \mathbf{S} $ is a non-repeating $ j $-sequence of length $ m $, by definition.
Note also that any subsequence of a non-repeating $ j $-sequence is also a non-repeating $ j $-sequence.
Given $ b, c \in \mathbb{N}_0 $, say that $ \mathbf{T} $ is \emph{$ (b, c) $-spreading} if $ |T_0 \cap T_{m}| = b $ and $ \mathrm{Len}(I_v) \le \frac{m}{c} $ for every $ v \in V(\mathbf{T}) \setminus (T_0 \cap T_{m}) $. 
We immediately make the following observation.

\begin{fact} \label{fact:ramsey_spreading_subint}
    Let $ 0 \le b \le j \le k $, $ m, c \in \mathbb{N} $, and $ \lambda > \frac{1}{c} $.
    Suppose $ \mathbf{T} $ is a $ (b, c) $-spreading non-repeating $ j $-sequence and $ I \subseteq [0, m] $ is an interval with $ \mathrm{Len}(I) \ge \lambda m $.
    Then $ \mathbf{T}_I $ is $ (b, \lfloor \lambda c \rfloor) $-spreading.
\end{fact}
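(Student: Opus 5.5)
Let $ I = [a, a'] $ with $ a' - a \ge \lambda m $, and write $ \mathbf{T}_I = (T_q)_{q \in I} $, which is a non-repeating $ j $-sequence of length $ m' \coloneqq a' - a \ge \lambda m $ since vertex index sets restricted to $ I $ remain intervals. We must verify the two parts of the $ (b, \lfloor \lambda c \rfloor) $-spreading definition: that $ |T_a \cap T_{a'}| = b $, and that every vertex $ v \in V(\mathbf{T}_I) \setminus (T_a \cap T_{a'}) $ has index interval (within $ \mathbf{T}_I $) of length at most $ m' / \lfloor \lambda c \rfloor $.

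For the second part, the index set of $ v $ in $ \mathbf{T}_I $ is $ I_v \cap I $, so its length is at most $ \mathrm{Len}(I_v) $. First I show $ v \notin T_0 \cap T_m $. Suppose $ v \in T_0 \cap T_m $. Then $ I_v $ is an interval containing $ 0 $ and $ m $, so $ I_v = [0, m] $. Hence $ v \in T_a \cap T_{a'} $, a contradiction. So spreading of $ \mathbf{T} $ gives $ \mathrm{Len}(I_v) \le m/c $. It remains to check $ m/c \le m'/\lfloor \lambda c \rfloor $. This follows from $ \lfloor \lambda c \rfloor \le \lambda c $ and $ m' \ge \lambda m $, since together they give $ m'/\lfloor \lambda c \rfloor \ge \lambda m / (\lambda c) = m/c $. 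Note that $ \lambda c > 1 $ guarantees $ \lfloor \lambda c \rfloor \ge 1 $, so the division is legitimate.

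For the first part, I show $ T_a \cap T_{a'} = T_0 \cap T_m $. The inclusion $ \supseteq $ follows from the observation above that $ v \in T_0 \cap T_m $ forces $ I_v = [0, m] $. For $ \subseteq $, take $ v \in T_a \cap T_{a'} $. Then $ [a, a'] \subseteq I_v $, so $ \mathrm{Len}(I_v) \ge a' - a \ge \lambda m > m/c $. If $ v \notin T_0 \cap T_m $, this contradicts the spreading bound $ \mathrm{Len}(I_v) \le m/c $. Hence $ v \in T_0 \cap T_m $, and so $ |T_a \cap T_{a'}| = b $.

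The argument has no real obstacle. The one point needing care is the strict inequality $ \lambda > 1/c $: it gives $ \lambda m > m/c $ in the intersection argument, which is what rules out vertices of $ T_a \cap T_{a'} $ lying outside $ T_0 \cap T_m $, and it also ensures $ \lfloor \lambda c \rfloor \ge 1 $.
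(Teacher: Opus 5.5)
Your proof is correct and follows essentially the same route as the paper's. You show $T_a \cap T_{a'} = T_0 \cap T_m$ using that every vertex outside $T_0 \cap T_m$ has an index interval of length at most $m/c < \lambda m \le \mathrm{Len}(I)$, and you bound the restricted interval lengths via $m/c \le m'/(\lambda c) \le m'/\lfloor \lambda c \rfloor$; you also explicitly check $\lfloor \lambda c \rfloor \ge 1$, which the paper leaves implicit.
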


\begin{proof}
    Let $ m' \coloneqq \mathrm{Len}(I) \ge \lambda m $ and write $ I = [\alpha, \beta ] $.
    Note that $ T_0 \cap T_m \subseteq T_{\alpha} \cap T_{\beta} $.
    Since restricting to the interval $ I $ can only decrease the lengths of the intervals $ I_v $ for $ v \in V(\mathbf{T}_I) $, we have $ \mathrm{Len}(I_v) \le \frac{m}{c} \le \frac{m'}{\lambda c} < m' $ for every $ v \in V(\mathbf{T}_I) \setminus (T_0 \cap T_m) $.
    In particular, $ T_{\alpha} \cap T_{\beta} = T_0 \cap T_m $, so it follows that $ \mathbf{T}_I $ is $ (b, \lfloor \lambda c \rfloor) $-spreading.
\end{proof}

We now start by showing that it suffices to prove that a subsequence of $ \mathbf{T}(F) $ is $ (b, 2c) $-spreading.

\begin{lemma} \label{fact:ramsey_spreading_concentrating}
    Let $0 \le b < j \le k$, $m \in \mathbb{N}$, $m' \in [m]$, and $c \in [\lfloor \frac{m'}{2} \rfloor]$. 
    Let $\mathbf{F} = (F, \mathbf{S})$ be a $j$-clique-path of length $m$.
    Suppose there exist indices $0 = i_0 < i_1 < \dots < i_{m'} = m$ such that the subsequence $ \mathbf{T} \coloneqq (S_{i_q})_{q \in [0, m']}$ is $(b, 2c)$-spreading.
    Then $\mathbf{F}$ is $(b, c)$-centred.
\end{lemma}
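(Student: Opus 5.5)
The plan is to check the two requirements of being $(b,c)$-centred directly, using a ``position'' along the subsequence $\mathbf{T}$ that an avoiding walk can only advance slowly. The intersection condition is immediate. We have $T_0 = S_0$ and $T_{m'} = S_m$, so $(b,2c)$-spreading gives $|S_0 \cap S_m| = b$. Write $B \coloneqq S_0 \cap S_m$. It remains to show that every $B$-avoiding walk $x = v_0, \ldots, v_d = y$ in $F$ with $x \in S_0 \setminus S_m$ and $y \in S_m \setminus S_0$ has $d \ge c$.

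The first step is a structural observation about where a vertex can live in a clique-path. For $v \in V(F)$, let $J_v \coloneqq \{ i \in [0, m+1] : v \in V(F_i) \}$, with $F_0, F_{m+1}$ the cliques on $S_0, S_m$ as in the definition. The condition $V((F_1 \cup \cdots \cup F_i) \cap F_{i+1}) \subseteq S_i \subseteq V(F_{i+1})$ gives the following: if $v \in V(F_a) \cap V(F_{a'})$ with $a < a'$, then $v \in S_a, S_{a+1}, \ldots, S_{a'-1}$. Hence $J_v$ is an interval $[a_v, a'_v]$ and $v \in S_i$ for all $i \in [a_v, a'_v - 1]$. Now suppose $v \notin B$. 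The set of $q$ with $i_q \in [a_v, a'_v - 1]$ lies inside the vertex index set of $v$ in $\mathbf{T}$, which has length at most $\frac{m'}{2c}$. So, after allowing one extra index for the endpoint $a'_v$, at most $\frac{m'}{2c} + 2$ indices $q$ have $i_q \in J_v$.

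Next, every edge of $F$ lies in some clique $F_{t}$ with $t \in [0, m+1]$. For each step of the walk, choose an index $t_p$ with $v_{p-1}, v_p \in V(F_{t_p})$. Consecutive indices $t_p, t_{p+1}$ both lie in $J_{v_p}$, and $v_p \notin B$ because the walk avoids $B$. Define the potential of a clique index $t$ as $\pi(t) \coloneqq \max\{ q : i_q \le t \}$. Then each transition changes $\pi$ by at most about $\frac{m'}{2c} + 1$. For the endpoints, $x \in S_0 \setminus B$ has $J_x$ starting at $0$, so $\pi(t_1) \le \frac{m'}{2c} + 1$. Symmetrically, $y \in S_m \setminus B$ forces $\pi(t_d) \ge m' - \frac{m'}{2c} - 1$.

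Summing these increments gives $m' \le (d+1)\bigl(\frac{m'}{2c} + 2\bigr)$. Since $c \le \frac{m'}{2}$, we have $\frac{m'}{2c} \ge 1$, and this yields $d \ge c$. The main obstacle is bookkeeping of the additive $\pm 1,2$ boundary terms: the endpoint $a'_v$, the fictitious cliques $F_0, F_{m+1}$, and vertices lying only in a single root set. These must fit within the slack between $2c$ and $c$. If the crude count above loses too much, I would first sharpen the per-vertex bound. The idea is to measure advance by the root sets $S_i$ containing $v_p$ rather than by cliques, so that each step contributes at most $\mathrm{Len}(I_{v_p}) + 1 \le \frac{m'}{2c} + 1$ and the endpoint terms are absorbed by $x$ and $y$ themselves.
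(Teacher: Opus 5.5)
\textbf{There is a genuine gap: the final step of your main argument is false.} Your structural facts are correct: each $J_v$ is an interval, $v \in S_i$ for $i \in [a_v, a'_v - 1]$, consecutive clique indices $t_p, t_{p+1}$ lie in $J_{v_p}$, and your endpoint bounds on $\pi(t_1)$ and $\pi(t_d)$ hold. The trouble is the conclusion. From $m' \le (d+1)(\frac{m'}{2c}+2)$ and $d \le c-1$ you only get $m' \le \frac{m'}{2} + 2c$. This is no contradiction once $c \ge m'/4$; for $m' = 2c$ it merely gives $d+1 \ge 2c/3$.

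The $+2$ per step is genuine for your clique-indexed potential. The increment $\pi(t_{p+1}) - \pi(t_p)$ counts the $q$ with $i_q \in (t_p, t_{p+1}] \subseteq [a_{v_p}+1, a'_{v_p}]$. This set can contain all $\frac{m'}{2c}+1$ elements of the index set of $v_p$ in $\mathbf{T}$, plus an index with $i_q = a'_{v_p}$, even though $v_p \notin S_{a'_{v_p}}$. Even a per-step bound of $\frac{m'}{2c}+1$ would not rescue it. With your endpoint bounds you would get $m' \le (d+1)(\frac{m'}{2c}+1) \le \frac{m'}{2} + c \le m'$, again with no contradiction. The hypothesis allows $c = \lfloor m'/2 \rfloor$, so there is no slack, and increment bookkeeping loses the one unit the argument needs.

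\textbf{The fix is your fallback, done as a covering argument on root-set indices; this is the paper's proof.} Let $I_v \subseteq [0,m]$ be the vertex index set of $v$ in $\mathbf{S}$, and $I'_v = \{q : i_q \in I_v\}$ its index set in $\mathbf{T}$. Note that the spreading hypothesis bounds $\mathrm{Len}(I'_v)$ by $\frac{m'}{2c}$, not $\mathrm{Len}(I_v)$, which may be long. Assume a walk with $d < c$ and proceed as follows.
\begin{enumerate}
\item Shorten the walk so that every inner vertex lies in some root set. A vertex in no root set lies in a single clique $F_q$ (by your own structural claim). Both its walk-neighbours then lie in $F_q$ and share an edge, so the vertex can be deleted.
\item For the remaining vertices, $v \in V(F_{\hat q})$ if and only if $\hat q \in I_v$ or $\hat q - 1 \in I_v$. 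Hence a common edge of $v_p$ and $v_{p+1}$ forces $\min I_{v_{p+1}} \le \max I_{v_p} + 1$.
\item Since $0 \in I_{v_0}$ and $m \in I_{v_d}$, the intervals cover: $[0,m] \subseteq \bigcup_p I_{v_p}$. Therefore $[0,m'] \subseteq \bigcup_p I'_{v_p}$. This step also handles vertices with $I'_v = \emptyset$, which an increment-based version would have to treat separately.
\item Count the $m'+1$ covered points: $m'+1 \le (d+1)(\frac{m'}{2c}+1) \le \frac{m'}{2} + c \le m'$, a contradiction.
\end{enumerate}
The extra $+1$ on the left, from counting points rather than measuring increments, is exactly what your potential discards.
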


\begin{proof}
    Note that $ |S_0 \cap S_m| = |T_0 \cap T_{m'}| = b $ by the definition of $ (b, 2c) $-spreading.
    Write $ I_v $ and $ I'_v $ for the vertex index sets of $ v \in V(\mathbf{S}) $ in $ \mathbf{S} $ and $ \mathbf{T} $, respectively, taking $ I'_v $ to be empty in the case that $ v \not \in V(\mathbf{T}) $.
    Observe that, for every $ v \in V(\mathbf{T}) \subseteq V(\mathbf{S}) $, we have $ \{ i_q: q \in I'_v \} = I_v \cap \{ i_q: q \in [0, m'] \} $.
    Now suppose for contradiction that there exist $ d < c $ and a $ (S_0 \cap S_{m}) $-avoiding walk $ v_0, \ldots, v_d \in V(F) $ with $ v_0 \in S_0 \setminus S_{m} $ and $ v_d \in S_{m} \setminus S_0 $.

    Firstly note that, without loss of generality, for every $ i \in [d - 1] $, there exists $ q \in [m - 1] $ such that $ v_i \in V(F_q \cap F_{q + 1}) \subseteq S_q $.
    Indeed, if $ v_i \in V(F_q \setminus (F_{q - 1} \cup F_{q + 1})) $ for some $ q \in [m] $, then $ v_i $ is only contained in edges with other vertices of $ F_q $, so $ v_{i - 1}, v_{i + 1} \in V(F_q) $, but this means that $ v_{i - 1} $ and $ v_{i + 1} $ share an edge, so we may delete $ v_i $ from the walk.

    Secondly, note further, for every $ i \in [0, d - 1] $, that $ \min{I_{v_{i + 1}}} \le \max{I_{v_i}} + 1 $.
    Indeed, if $ q_1 \le q_2 \le q_3 $, then $ V(F_{q_1}) \cap V(F_{q_3}) \subseteq V(F_{q_2}) $.
    This implies that, if $ v_i \in V(F_{\hat{q}}) $ for some $ \hat{q} \ge q + 1 $, then $ v_i \in V(F_{\hat{q}} \cap F_q) \subseteq V(F_{\hat{q}} \cap F_{\hat{q} - 1}) $, and likewise if $ \hat{q} \le q $, then $ v_i \in V(F_{\hat{q}} \cap F_{q + 1}) \subseteq V(F_{\hat{q}} \cap F_{\hat{q} + 1}) $.
    We deduce that $ v_i \in V(F_{\hat{q}}) $ if and only if $ \hat{q} \in I_{v_i} $ or $ \hat{q} - 1 \in I_{v_i} $.
    Since there is an edge in $ F $ containing both $ v_i $ and $ v_{i + 1} $, there must exist $ \hat{q} \in [m] $ such that $ v_i, v_{i + 1} \in F_{\hat{q}} $, which means that $ \min{I_{v_{i + 1}}} \le \hat{q} $ and $ \max{I_{v_i}} \ge \hat{q} - 1 $, so $ \min{I_{v_{i + 1}}} \le \max{I_{v_i}} + 1 $.
    Since $ 0 \in I_{v_0} $ and $ m \in I_{v_d} $, it follows that $ [0, m] \subseteq \bigcup_{i = 0}^d I_{v_i} $, and so in particular $ [0, m'] \subseteq \bigcup_{i = 0}^d I'_{v_i} $.

    On the other hand, since $ v_i \not \in S_0 \cap S_{m} $ for each $ i \in [0, d] $, we know that each $ \mathrm{Len}(I'_{v_i}) \le \frac{m'}{2c} $, and in particular $ |I'_{v_i}| \le \frac{m'}{2c} + 1 $.
    This means that $ m' + 1 \le (d + 1) \left( \frac{m'}{2c} + 1 \right) \le m' $, yielding the required contradiction.
\end{proof}

We now show in general that non-repeating sequences contain large spreading intervals.

\begin{lemma} \label{fact:ramsey_basic}
    Suppose $ 1 / m \ll 1/C \ll 1/c, 1/k $ and let $ j \in [0, k] $.
    For any non-repeating $ j $-sequence $ \mathbf{T} $ of length $ m $, there exist $ i_1 \in [0, m] $, $ i_2 \in [i_1 + \frac{m}{C}, m] $, and $ b \in [0, j] $ such that $ \mathbf{T}_{[i_1, i_2]} $ is $ (b, c) $-spreading.
\end{lemma}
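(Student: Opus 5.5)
We argue by induction on $ j $, or more precisely by an iterative refinement: we repeatedly pass to subintervals, each time either finding a spreading interval or gaining one vertex that is present throughout the current interval. Set up constants $ 1/C = 1/C_{j} \ll 1/C_{j-1} \ll \cdots \ll 1/C_0 \ll 1/c, 1/k $. We maintain an interval $ J_t = [\alpha_t, \beta_t] \subseteq [0, m] $ with $ \mathrm{Len}(J_t) \ge m/C_t $ and a set $ B_t \subseteq T_{\alpha_t} \cap T_{\beta_t} $ of size $ t $ such that every vertex of $ B_t $ lies in $ T_q $ for all $ q \in J_t $. Start with $ J_0 = [0, m] $ and $ B_0 = \emptyset $. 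Since $ |T_q| \le j $, we must have $ t \le j $, so the process terminates after at most $ j + 1 $ rounds.

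In round $ t $, consider the current interval $ J_t $ of length $ L \ge m/C_t $. Call a vertex $ v \in V(\mathbf{T}_{J_t}) \setminus B_t $ \emph{long} if $ \mathrm{Len}(I_v \cap J_t) > L/(c\,C') $ for a suitable intermediate constant $ C' $ with $ 1/C_{t+1} \ll 1/C' \ll 1/C_t $.

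\textbf{Case 1: no long vertex exists.} Then take the whole $ J_t $. Its endpoint sets satisfy $ T_{\alpha_t} \cap T_{\beta_t} \supseteq B_t $. Any further common vertex $ v $ would have $ I_v \supseteq J_t $ (as $ I_v $ is an interval) and hence be long, so $ T_{\alpha_t} \cap T_{\beta_t} = B_t $. Every other vertex has $ \mathrm{Len}(I_v \cap J_t) \le L/c $, so $ \mathbf{T}_{J_t} $ is $ (t, c) $-spreading.

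\textbf{Case 2: some long vertex $ v $ exists.} Then $ I_v \cap J_t $ is an interval of length greater than $ L/(cC') $ on which $ v $ is always present. Set $ J_{t+1} \coloneqq I_v \cap J_t $ and $ B_{t+1} \coloneqq B_t \cup \{ v \} $. The invariant persists: vertices of $ B_t $ are present throughout $ J_t \supseteq J_{t+1} $, and $ v $ is present throughout $ J_{t+1} $. The new length is at least $ m/(C_t c C') \ge m/C_{t+1} $. Since $ |B_{t+1}| \le |T_q| \le j $, Case 2 can occur at most $ j $ times, so eventually Case 1 occurs. That gives $ i_1, i_2 $ with $ i_2 - i_1 \ge m/C_j \ge m/C $ and $ b = t \in [0, j] $. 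The hypothesis $ 1/m \ll 1/C $ ensures that the intervals have positive integer length, so floors are harmless.

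\textbf{Main obstacle.} The only delicate point is checking in Case 1 that the intersection of the endpoint sets is exactly $ B_t $, rather than something larger. The argument above handles this via the interval property of the index sets $ I_v $: a vertex in both endpoint sets is present on all of $ J_t $. Beyond that, the work is bookkeeping so that the constant hierarchy absorbs the factor $ cC' $ lost in each of at most $ k $ rounds.
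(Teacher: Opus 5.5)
Your proof is correct and is essentially the paper's argument: the paper inducts on $j$, restricting to the index interval $I_v$ of a vertex with $\mathrm{Len}(I_v) \ge m/c$ and deleting $v$ to obtain a non-repeating $(j-1)$-sequence, while you unroll this induction into an iteration that records the pinned vertices in $B_t$ rather than deleting them. The intermediate constant $C'$ is unnecessary (the threshold $L/c$ already works), and $C$ should be taken at least $C_k$ since it may not depend on $j$, but these points are cosmetic.
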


\begin{proof}
    We work by induction on $ j \in [0, k] $, proving the statement for any $ C \ge c^j $ and $ m \ge m_0(j) $, for some integer $ m_0(j) $, which we define inductively.
    
    The case $ j = 0 $ is trivial, taking $ b = 0 $. 
    Now fix $ j \in [k] $, and assume that the statement holds for $ j - 1 $.
    If $ \mathrm{Len}(I_v) \le \frac{m}{c} $ for every $ v \in V(\mathbf{T}) $, then we are done, so assume this is not the case.
    
    Choose $ v $ such that $ I_v = [\hat{i}_1, \hat{i}_2] $ has $ m' \coloneqq \hat{i}_2 - \hat{i}_1 \ge \frac{m}{c} \ge m_0(j - 1) $, by choosing $ m_0(j) $ sufficiently large.
    We obtain a non-repeating $ (j - 1) $-sequence $ \mathbf{T}' $ of length $ m' $ by setting $ T'_i \coloneqq T_{\hat{i}_1 + i} \setminus \{ v \} $ for each $ i \in [0, m'] $.
    Then, by the induction hypothesis with $ \frac{C}{c} \ge c^{j - 1} $ playing the role of $ C $, there exist $ i_1' \in [0, m'] $, $ i_2' \in [i_1' + \frac{c m'}{C}, m'] $, and $ b' \in [0, j - 1] $ such that $ \mathbf{T}'_{[i_1', i_2']} $ is $ (b', c) $-spreading.
    Set $ i_1 \coloneqq \hat{i}_1 + i_1' $, $ i_2 \coloneqq \hat{i}_1 + i_2' $, and $ b \coloneqq b' + 1 $, so $ i_2 - i_1 \ge \frac{c m'}{C} \ge \frac{m}{C} $.
    It thus follows that $ \mathbf{T}_{[i_1, i_2]} $ is $ (b, c) $-spreading, as required for the inductive step; this completes the proof.
\end{proof}

We may now combine \cref{fact:ramsey_spreading_concentrating,fact:ramsey_basic} to prove \cref{lem:ramsey_full}.

\begin{proof}[Proof of \cref{lem:ramsey_full}]
    Suppose further $ 1/i \ll 1/C \ll 1/c, 1/k $.
    Define a sequence $ 0 = a_0 < a_1 < \cdots < a_{2i} = M(\mathbf{s}, \ell) $ by taking $ a_{2q} \coloneqq M(\mathbf{s}_{[q]}, \ell) $ and $ a_{2q + 1} \coloneqq M(\mathbf{s}_{[q]}, \ell) + \ell $ for each $ q \in [0, i - 1] $.
    Let $ \mathbf{T} \coloneqq (S_{a_q})_{q \in [0, 2i]} $.

    We may apply \cref{fact:ramsey_basic} to $ \mathbf{T} $ to find $ i'_1 \in [0, 2i], i'_2 \in [i'_1 + \frac{2i}{C}, 2i] $, and $ b \in [0, j] $ such that $ \mathbf{T}_{[i'_1, i'_2]} $ is $ (b, 3c) $-spreading.
    Note in fact that $ b \in [0, j - 1] $; indeed, we cannot have $ b = j $, since this would imply that $ S_{a_{i'_1}} = S_{a_{i'_2}} $, contradicting the pairwise distinctness of the sets $ (S_q) $.
    To ensure a subsequence of the required form, we may then choose $ i_1, i_2 \in [0, i] $ such that $ i'_1 \le 2 i_1 < 2 i_2 - 1 \le i'_2 $ and $ 2i_2 - 1 - 2i_1 \ge i'_2 - i'_1 - 2 \ge \frac{2}{3} (i'_2 - i'_1) $.
    By \cref{fact:ramsey_spreading_subint}, the subsequence $ \mathbf{T}_{[2 i_1, 2i_2 - 1]} $ is $ (b, 2c) $-spreading.
    We may therefore apply \cref{fact:ramsey_spreading_concentrating}, with $ (\mathbf{F}_{[M(\mathbf{s}_{[i_1]}, \ell), M(\mathbf{s}_{[i_2 - 1]}, \ell) + \ell]}, \mathbf{T}_{[2 i_1, 2i_2 - 1]}) $ playing the role of $ (\mathbf{F}, \mathbf{T}) $, to see that $ \mathbf{F}_{[M(\mathbf{s}_{[i_1]}, \ell), M(\mathbf{s}_{[i_2 - 1]}, \ell) + \ell]} $ is $ (b, c) $-centred, as required.
    
    To see the second statement, note that clearly whether or not $ \mathbf{F}_J $ is $ (b, c) $-centred is fully determined by $ \mathbf{F}_J $ (that is, independent of the rest of $ \mathbf{F} $), so by choosing $ (i_2, i_1) $ to be lexicographically minimal, we ensure that our choice depends only upon $ \mathbf{F}_{[0, m_2]} $.
    Furthermore, the sizes of any intersections of the root sets in $ \mathbf{S} $ are invariant under semi-isomorphism, so clearly whether or not $ \mathbf{F}_J $ is $ (b, c) $-centred is also independent of this.
\end{proof}

\subsection{Centred clique-path characterisation} \label{sect:char_cent_decomp}

In this section, we use \cref{lem:ramsey_full} to partition the multiset $ \mathcal{G}_{\ell, \ell}^{(j)} $ of clique-paths from \cref{lem:process_graphs_char} according to their centred subpaths, which will be useful in the proof of \cref{lem:norm_decrease}.

\begin{lemma} \label{lem:centered_decomp}
    Suppose $ 1/\ell \ll 1/c, 1/k $ and let $ j \in [k] $.
    Then there exists a partition $ \mathcal{B}^*_{j, \ell} $ of the multiset $ \mathcal{G}_{\ell, \ell}^{(j)} $ and, for each $ \mathcal{B} \in \mathcal{B}^*_{j, \ell} $, integers $ b(\mathcal{B}) \in [0, j - 1] $, $ i_2(\mathcal{B}) \in [\ell] $, and $ m(\mathcal{B}) \le 2^{\ell^{3j}} $, a $ j $-clique-path $ \hat{\mathbf{F}}_1(\mathcal{B}) $, and a $ (b(\mathcal{B}), c) $-centred semi-ordered $ j $-clique-path $ \hat{\mathcal{F}}_2(\mathcal{B})$, each of length at most $ \ell^{3k} $, such that
    $$ \Xi[\mathcal{B}] = m(\mathcal{B}) \cdot \Xi[\hat{\mathbf{F}}_1(\mathcal{B})] \circ \Xi[\hat{\mathcal{F}}_2(\mathcal{B})] \circ \Xi[\hat{\mathcal{F}}_3(j, \ell, i_2(\mathcal{B}))], $$
    where
    $$ \hat{\mathcal{F}}_3(j, \ell, i_2) \coloneqq \bigcup_{\mathbf{s}' \in \mathcal{S}_{j, \ell}} \mathcal{F}_{\mathbf{s}'}^{(j)} \bullet \mathcal{G}_{\ell, \ell - i_2}^{(j)},$$
    noting that the union is with multiplicity.
\end{lemma}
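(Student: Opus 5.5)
We apply \cref{lem:ramsey_full} with $ i \coloneqq \ell $; this is possible since $ 1/\ell \ll 1/c, 1/k $. It gives, for every $ \mathbf{F} \in \mathcal{G}^{(j)}_{\ell, \ell} $ (coming from some $ \mathbf{s} \in (\mathcal{S}_{j, \ell})^{\ell} $), indices $ i_1(\mathbf{F}) < i_2(\mathbf{F}) $ and $ b(\mathbf{F}) \in [0, j-1] $. These have two properties.
\begin{itemize}
\item The subpath $ \mathbf{F}_{[M(\mathbf{s}_{[i_1]}, \ell), m_2]} $ is $ (b, c) $-centred.
\item They depend only on the semi-isomorphism class of the prefix $ \mathbf{F}_{[0, m_2]} $, where $ m_2 = M(\mathbf{s}_{[i_2-1]}, \ell) + \ell $.
\end{itemize}

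By the definition of $ \mathcal{G}_\ell^{(j)}(\mathbf{s}) $, each such $ \mathbf{F} $ decomposes uniquely as a concatenation $ \mathbf{F} = \mathbf{F}^1 \bullet \mathbf{F}^2 \bullet \mathbf{F}^3 $. Here $ \mathbf{F}^1 \coloneqq \mathbf{F}_{[0, M(\mathbf{s}_{[i_1]}, \ell)]} $ and $ \mathbf{F}^2 \coloneqq \mathbf{F}_{[M(\mathbf{s}_{[i_1]}, \ell), m_2]} $. The remainder $ \mathbf{F}^3 $ lies in $ \mathcal{F}^{(j)}_{\mathbf{s}_{i_2}} \bullet \mathcal{G}_\ell^{(j)}(\mathbf{s}_{[i_2+1, \ell]}) $, since $ m_2 $ is exactly the point where the block $ \mathcal{F}^{(j)}_{\hat{\mathbf{s}}} $ of the $ i_2 $-th round ends. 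We then define the partition $ \mathcal{B}^*_{j,\ell} $ by grouping the multiset according to the data $ (\mathbf{F}^1, \mathcal{SO}(\mathbf{F}^2), i_2, \mathbf{s}_{[i_2-1]}) $. That is, two elements are in the same class if their first pieces are isomorphic, their middle pieces are semi-isomorphic, and they have the same $ i_2 $ and the same first $ i_2-1 $ entries of $ \mathbf{s} $. We might coarsen further by forgetting $ \mathbf{s}_{[i_2-1]} $ and absorbing the count into $ m(\mathcal{B}) $.

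The key check is that each class $ \mathcal{B} $ is exactly a product set. Fix a class and let $ \hat{\mathbf{F}}_1 $ and $ \hat{\mathcal{F}}_2 $ be the common first and middle pieces. By the invariance statement in \cref{lem:ramsey_full}, whether a prefix yields these parameters depends only on $ \mathbf{F}_{[0,m_2]} $ up to semi-isomorphism. Hence every completion of the prefix, i.e.\ every choice of $ \mathbf{s}_{i_2}, \ldots, \mathbf{s}_\ell \in \mathcal{S}_{j,\ell} $ and every clique-path in the corresponding sets, stays in the class. So $ \mathcal{B} $ is, as a multiset, $ m(\mathcal{B}) $ copies of $ \hat{\mathbf{F}}_1 \bullet \mathcal{F}_2 \bullet \hat{\mathcal{F}}_3(j,\ell,i_2) $. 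Here $ \mathcal{F}_2 $ ranges over all orderings of root sets, i.e.\ the ordered representatives of $ \hat{\mathcal{F}}_2 $ consistent with $ \hat{\mathbf{F}}_1 $. The factor $ m(\mathcal{B}) $ counts the multiplicities from the choices of $ \mathbf{s}_{[i_2-1]} $ in $ (\mathcal{S}_{j,\ell})^{i_2-1} $ giving the same pieces.

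Then \cref{fact:xi_comp_commute} and bilinearity give $ \Xi[\mathcal{B}] = m(\mathcal{B}) \Xi[\hat{\mathbf{F}}_1] \circ \Xi[\hat{\mathcal{F}}_2] \circ \Xi[\hat{\mathcal{F}}_3] $. Here $ \Xi $ of a semi-ordered path is read as the sum over its ordered representatives; these correspond bijectively to semi-copies with varying root orderings. The size bounds follow from \cref{lem:process_graphs_char}. All lengths are at most $ \ell^{3k} $, and $ m(\mathcal{B}) \le |\mathcal{S}_{j,\ell}|^{\ell} \le 2^{\ell (2\ell)^j} \le 2^{\ell^{3j}} $.

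The main obstacle is the bookkeeping with orderings. The middle piece is only determined up to semi-isomorphism, and one must verify two things. First, summing over the root-set orderings inside the class exactly reproduces $ \Xi $ of the semi-ordered path $ \hat{\mathcal{F}}_2 $. Second, this does not change how $ \mathbf{F}^3 $ glues on, since $ \mathbf{F}^3 $ is a full union over all root orderings anyway. I would handle this by choosing canonical representatives: fix the ordering of the last root set of $ \mathbf{F}^2 $ by relabelling $ \mathbf{F}^3 $, which is harmless because $ \hat{\mathcal{F}}_3 $ is closed under reordering its first root set. I would then check the multiplicities carefully with a bijection in the style of \cref{fact:xi_comp_commute}.
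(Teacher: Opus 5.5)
Your proposal is correct and is essentially the paper's proof. It uses the same equivalence relation, since your data $(\mathbf{F}^1,\mathcal{SO}(\mathbf{F}^2),i_2,\mathbf{s}_{[i_2-1]})$ already determine $i_1$ and $b$; it describes each class as $m(\mathcal{B})$ copies of $\hat{\mathbf{F}}_1\bullet\hat{\mathcal{F}}_2\bullet\hat{\mathcal{F}}_3$ via the invariance part of \cref{lem:ramsey_full}; and it finishes with \cref{fact:xi_comp_commute} and bilinearity. The ordering worry in your last paragraph needs no canonical representatives, and fixing the last root ordering would in general shrink $\hat{\mathcal{F}}_2$ to a proper subset. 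Concatenation always glues root sets order-preservingly and the pieces are recovered as subpaths, so summing directly over all of $\hat{\mathcal{F}}_2$ is what is needed, and is what the paper does.
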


\begin{proof}
    Let $ \mathbf{s}, \mathbf{s}' \in (\mathcal{S}_{j, \ell})^{\ell} $, and $ \mathbf{F} \in \mathcal{G}_{\ell}^{(j)}(\mathbf{s}) $ and $ \mathbf{F}' \in \mathcal{G}_{\ell}^{(j)}(\mathbf{s}')$ be two $ j $-clique-paths in $ \mathcal{G}_{\ell, \ell}^{(j)} $.
    We define an equivalence relation $ \sim $ on $ \mathrm{Set}(\mathcal{G}_{\ell, \ell}^{(j)}) $ by saying that $ \mathbf{F} \sim \mathbf{F}' $ if and only if the following holds.
    Firstly, we require that $ i_1 \coloneqq i_1(\mathbf{F}) = i_1(\mathbf{F}') $, $ i_2 \coloneqq i_2(\mathbf{F}) = i_2(\mathbf{F}') $, $ b \coloneqq b(\mathbf{F}) = b(\mathbf{F}') $, and $ \mathbf{s}_{[i_2 - 1]} = \mathbf{s}'_{[i_2 - 1]} $.
    Secondly, recalling the definition of $ M(\mathbf{s}, \ell) $ from \cref{sect:proof_concentrating} and writing $ M_1 \coloneqq M(\mathbf{s}_{[i_1]}, \ell) $ and $ M_2 \coloneqq M(\mathbf{s}_{[i_2 - 1]}, \ell) + \ell $, we require that $ \mathbf{F}_{[0, M_1]} = \mathbf{F}'_{[0, M_1]} $ (that is, they are isomorphic) and that $ \mathbf{F}_{[M_1, M_2]} $ and $ \mathbf{F}'_{[M_1, M_2]} $ are semi-isomorphic.
    In other words, we essentially split each $ \mathbf{F} = \hat{\mathbf{F}}_1 \bullet \hat{\mathbf{F}}_2 \bullet \hat{\mathbf{F}}_3 $, where $ \hat{\mathbf{F}}_2 $ is $ (b, c) $-centred, and partition $ \mathcal{G}_{\ell, \ell}^{(j)} $ according to the isomorphism classes of $ \hat{\mathbf{F}}_1 = (\hat{F}_1, \hat{\mathbf{S}}_1) $ and $ \hat{\mathbf{F}}_2 = (\hat{F}_2, \hat{\mathbf{S}}_2) $, treating sets in $ \hat{\mathbf{S}}_1 $ as ordered but ignoring the orderings of all but the first set in $ \hat{\mathbf{S}}_2 $.
    
    Define now $ \mathcal{B}^*_{j, \ell} \coloneqq \mathcal{G}_{\ell, \ell}^{(j)} / \sim $ to be the set of equivalence classes, treating each $ \mathcal{B} \in \mathcal{B}^*_{j, \ell} $ as a multiset, with multiplicities matching those in $ \mathcal{G}_{\ell, \ell}^{(j)} $.
    Given $ \mathcal{B} \in \mathcal{B}^* $, take an arbitrary clique-path $ \mathbf{F} \in \mathrm{Set}(\mathcal{B}) $ and define $ \hat{\mathbf{F}}_1(\mathcal{B}) \coloneqq \mathbf{F}_{[0, M_1]} $ and $ \hat{\mathcal{F}}_2(\mathcal{B}) \coloneqq \mathcal{SO}(\mathbf{F}_{[M_1, M_2]}) $, noting that these are well-defined by the definition of $ \sim $.
    Recall from the definitions in \cref{section:clique_paths_def} that $ \hat{\mathbf{F}}_1 $ is a $ j $-clique-path (with ordered root sets) and $ \hat{\mathcal{F}}_2 $ is a semi-ordered $ j $-clique-path (a clique-path in which only the first root set is ordered).
    By the choice of $ M_1 $ and $ M_2 $, every $ \mathbf{F}' \in \hat{\mathcal{F}}_2(\mathcal{B}) $ is $ (b, c) $-centred.

    \begin{claim}
        We have
        $$ \mathrm{Set}(\mathcal{B}) = \hat{\mathbf{F}}_1(\mathcal{B}) \bullet \hat{\mathcal{F}}_2(\mathcal{B}) \bullet \bigcup_{\mathbf{s}' \in \mathrm{Set}(\mathcal{S}_{j, \ell})} \mathcal{F}_{\mathbf{s}'}^{(j)} \bullet \mathrm{Set}(\mathcal{G}_{\ell, \ell - i_2}^{(j)}) $$
    as sets, noting that the union over $ \mathcal{S}_{j, \ell} $ is considered without multiplicity.
    \end{claim}

    In other words, $ \mathrm{Set}(\mathcal{B}) $ consists of all clique-paths in $ \mathcal{G}_{\ell, \ell}^{(j)} $ with some fixed initial segment, the first part of which has a fixed ordering.

    \begin{claimproof}
        For every $ \mathbf{F} \in \mathrm{Set}(\mathcal{B}) $, note that there exists (a unique) $ \mathbf{F}' \in \hat{\mathcal{F}}_2(\mathcal{B}) $, corresponding to a choice of orderings of the root sets, so that $ \hat{\mathbf{F}}_1(\mathcal{B}) \bullet \mathbf{F}' = \mathbf{F}_{[0, M_2]} $.
        Furthermore, by the definition of $ \mathcal{G}_{\ell}^{(j)}(\mathbf{s}) $, the clique-path $ \mathbf{F}_{[M_2, M(\mathbf{s}, \ell)]} $ is an element of $ \mathcal{F}_{\mathbf{s}_{i_2}}^{(j)} \bullet \mathrm{Set}(\mathcal{G}_{\ell, \ell - i_2}^{(j)}) $.
        This proves the inclusion of the left-hand side in the right-hand side.

        Now let $ \mathbf{F} \in \mathrm{Set}(\mathcal{B}) $ and $ \mathbf{F}' $ be an element of the right-hand side with $ \mathbf{F}' \in \mathcal{G}_{\ell}^{(j)}(\mathbf{s}') $ for some $ \mathbf{s}' \in (\mathcal{S}_{j, \ell})^{\ell} $.
        By definition, we have that $ \mathbf{F}_{[0, M_1]} = \mathbf{F}'_{[0, M_1]} $ and that $ \mathbf{F}_{[M_1, M_2]} $ and $ \mathbf{F}'_{[M_1, M_2]} $ are semi-isomorphic, so in particular $ \mathbf{F}_{[0, M_2]} $ and $ \mathbf{F}'_{[0, M_2]} $ are semi-isomorphic.
        It is thus clear that $ \mathbf{s}_{[i_2 - 1]} = \mathbf{s}'_{[i_2 - 1]} $, and we recall from \cref{lem:ramsey_full} that this also implies $ i_1(\mathbf{F}) = i_1(\mathbf{F}') $, $ i_2(\mathbf{F}) = i_2(\mathbf{F}') $, and $ b(\mathbf{F}) = b(\mathbf{F}') $.
        As such, $ \mathbf{F} \sim \mathbf{F}' $, and so $ \mathbf{F}' \in \mathrm{Set}(\mathcal{B}) $, as required.
    \end{claimproof}

    Now define $ m(\mathcal{B}) \le 2^{(2\ell)^j (i_2 - 1)} \le 2^{\ell^{3j}} $ to be the multiplicity of $ \mathbf{s}_{[i_2 - 1]} $ in $ (\mathcal{S}_{j, \ell})^{i_2 - 1} $, noting that $ \mathbf{s}_{[i_2 - 1]} $ is uniquely determined by the class $ \mathcal{B} $ (independent of the choice of $ \mathbf{F}$), and let $ \mathcal{B}' $ be the multiset with $ \mathrm{Set}(\mathcal{B}') \coloneqq \hat{\mathbf{F}}_1(\mathcal{B}) \bullet \hat{\mathcal{F}}_2(\mathcal{B})$ in which every element has multiplicity $ m(\mathcal{B}) $.

    \begin{claim}
        We have
        $$ \mathcal{B} = \mathcal{B}' \bullet \bigcup_{\mathbf{s}' \in \mathcal{S}_{j, \ell}} \mathcal{F}_{\mathbf{s}'}^{(j)} \bullet \mathcal{G}_{\ell, \ell - i_2}^{(j)} $$
        as multisets, noting that the union over $ \mathcal{S}_{j, \ell} $ is now considered with multiplicity.
    \end{claim}

    \begin{claimproof}
        Fix some $ \mathbf{F} \in \mathrm{Set}(\mathcal{B}) $ and observe that there is a unique $ \mathbf{s} \in \mathrm{Set}((\mathcal{S}_{j, \ell})^{\ell}) $ for which $ \mathbf{F} \in \mathcal{G}_{\ell}^{(j)}(\mathbf{s}) $, since this is determined by the sequence $ \mathbf{s}(\mathbf{F}) $.
        By the definition of $ \mathcal{G}_{\ell, \ell}^{(j)} $, the multiplicity of $ \mathbf{F} $ in $ \mathcal{B} $ is exactly the multiplicity of $ \mathbf{s} $ in $ (\mathcal{S}_{j, \ell})^{\ell} $.
        This is simply the product of $ m(\mathcal{B}) $ with the multiplicity of $ \mathbf{s}_{[i_2, \ell]} $ in $ (\mathcal{S}_{j, \ell})^{\ell - i_2 + 1} $ (noting that the latter may vary for distinct $ \mathbf{F} \in \mathrm{Set}(\mathcal{B}) $).
        By the definition of the multiset union, this is equal to the multiplicity of $ \mathbf{F} $ on the right-hand side.
    \end{claimproof}

    It now follows by \cref{fact:xi_comp_commute} that
    $$ \Xi[\mathcal{B}] = \sum_{\mathbf{F} \in \mathcal{B}} \Xi[\mathbf{F}] = m(\mathcal{B}) \cdot \Xi[\hat{\mathbf{F}}_1(\mathcal{B})] \circ \Xi[\hat{\mathcal{F}}_2(\mathcal{B})] \circ \Xi[\hat{\mathcal{F}}_3(j, \ell, i_2(\mathcal{B}))], $$
    as required.
\end{proof}

We now have all the tools we need to prove our main lemma.

\subsection{Proof of key lemma} \label{sect:proof_lem_decrease}

We now proceed to prove \cref{lem:norm_decrease} by induction on $ j \in [0, k] $, using the inductive definition of $ \mathcal{P}_{j, \ell} $.
Intuitively, the idea is that the operator $ \mathcal{R}_j^{\ell} $ replaces the discrepancy $ \xi^{(j)} $ by a suitably weighted sum over discrepancies $ \xi^{(s)} $ for different $ s \in [0, j - 1] $.
The inductive hypothesis says that applying $ \mathcal{P}_{j - 1, \ell} $ significantly reduces all discrepancies $ \xi^{(s)} $ for $ s \in [0, j - 1] $, so we are able to conclude that $ \mathcal{P}_{j, \ell} $ further reduces the discrepancy $ \xi^{(j)} $.

Let us start by very briefly and roughly summarising the inductive step, ignoring many details for now.
For simplicity, let $ C $ be a suitably large constant for the purpose of this summary.
We use the partition $ \mathcal{B}^* $ in \cref{lem:centered_decomp} and analyse the effect of $ \hat{\mathcal{F}}_3 $, $ \hat{\mathcal{F}}_2 $, and $ \hat{\mathbf{F}}_1 $ one at a time.
Firstly, we write $ \langle \Xi[\mathcal{B}], \xi^{(j)}[\varphi] \rangle = \langle \Xi[\hat{\mathbf{F}}_1] \circ \Xi[\hat{\mathcal{F}}_2], \xi^{(j)}[\widehat{\psi}] \rangle $ for some $ K_r $-function $ \psi $ with $ \xi^{(j)}[\widehat{\psi}] = \langle \Xi[\hat{\mathcal{F}}_3], \xi^{(j)}[\varphi] \rangle $.
We then use the induction hypothesis to show that $ \norm{\xi^{(b)}[\widehat{\psi}]} \le \eps \norm{\xi^{(j)}[\varphi]} \frac{d_b}{d_j} $ for any $ b \in [0, j - 1] $, recalling from \cref{fact:uniformity_disc} that, for a general $ K_r $-function, we expect $ \norm{\xi^{(b)}[\varphi]} $ to be (at most) of the order of magnitude $ \norm{\xi^{(j)}[\varphi]} \frac{d_b}{d_j} $, because each set in $ E_b $ is contained in $ \Theta(\frac{d_b}{d_j}) $ sets in $ E_j $.
Next, we use the concentration in \ref{cond:creg3} for the $ (b, c) $-centred semi-ordered $ j $-clique-path $ \hat{\mathcal{F}}_2 $ to show that $ \norm{\langle \Xi[\hat{\mathcal{F}}_2], \xi^{(j)}[\widehat{\psi}] \rangle} \le C \norm{\xi^{(b)}[\widehat{\psi}]} \frac{d_j}{d_b} \le C \eps \norm{\xi^{(j)}[\varphi]} $.
Finally, we use the fact that $ \norm{\Xi[\hat{\mathbf{F}}_1]} \le C $ and $ |\mathcal{B}^*| \le C $ to deduce the desired bound.
We now proceed with the full proof of \cref{lem:norm_decrease}.

\begin{proof}[Proof of \cref{lem:norm_decrease}]
    Introduce a new constant $ C_3 > 0 $ with $ 1/C_2 \ll 1/C_3 \ll 1/C_1 $.
    We prove by induction on $ j \in [0, k] $ that
    \begin{equation} \label{eqn:norm_dec_ih}
        \norm{\xi^{(j)}[\mathcal{P}_{j, \ell} \varphi]} \le C_3^{j + 1} \eps \norm{\xi^{(j)}[\varphi]}
    \end{equation}
    for any globally-balanced $ K_r $-function $ \varphi $ on $ G $.
    Note that this clearly implies the statement.
    
    Note that the base case $ j = 0 $ is immediate from the globally-balanced property of \cref{fact:global_balance}, since $ \norm{\xi^{(0)}[\varphi]} = \left| \sum_{e \in E(G)} \xi[\varphi](e) \right| = 0 $.
    We now proceed with the inductive step, so assume that the statement holds up to $ j - 1 $, for some $ j \in [k] $.
    By \cref{lem:process_graphs_char}~\ref{cond:proc_graph_char_2} with $ \ell $ playing the role of $ i $ and \cref{lem:centered_decomp}, we may write
    \begin{equation} \label{eqn:norm_dec_-2}
        \xi^{(j)}[\mathcal{P}_{j, \ell} \varphi]
            = \xi^{(j)}[(\mathcal{R}_j^{\ell} \mathcal{P}_{j - 1, \ell})^{\ell} \varphi]
            = \sum_{\mathbf{F} \in \mathcal{G}_{\ell, \ell}^{(j)}} \langle \Xi[\mathbf{F}], \xi^{(j)}[\varphi] \rangle
            = \sum_{\mathcal{B} \in \mathcal{B}^*_{j, \ell}} \langle \Xi[\mathcal{B}], \xi^{(j)}[\varphi] \rangle.
    \end{equation}
    Since $ |\mathcal{G}_{\ell, \ell}^{(j)}| $ can easily be bounded in terms of $ \ell $, we now fix $ \mathcal{B} \in \mathcal{B}^*_{j, \ell} $, and aim to bound $ \norm{\langle \Xi[\mathcal{B}], \xi^{(j)}[\varphi] \rangle} $.
    Write $ b \coloneqq b(\mathcal{B}) $, $ i_2 \coloneqq i_2(\mathcal{B}) $, $ \hat{\mathbf{F}}_1 \coloneqq \hat{\mathbf{F}}_1(\mathcal{B}) $, and $ \hat{\mathcal{F}}_2 \coloneqq \hat{\mathcal{F}}_2(\mathcal{B}) $, as in \cref{lem:centered_decomp}.
    To obtain the desired bound, we consider $ \hat{\mathbf{F}}_1 $, $ \hat{\mathcal{F}}_2 $, and $ \hat{\mathcal{F}}_3 $ separately.
    
    Define first $ \psi \coloneqq (\mathcal{R}_{j}^{\ell} \mathcal{P}_{j - 1, \ell})^{\ell - i_2} \varphi $ and $ \widehat{\psi} \coloneqq \mathcal{P}_{j - 1, \ell} \psi $.
    Using \cref{lem:process_graphs_char}~\ref{cond:proc_graph_char_1}~and~\ref{cond:proc_graph_char_2} for the first two equalities, respectively, as well as associativity and \cref{fact:xi_comp_commute} for the final equality, we see that
    \begin{align*}
        \xi^{(j)}[\widehat{\psi}]
            % = \xi^{(j)}[\mathcal{P}_{j - 1, \ell} (\mathcal{R}_{j}^{\ell} \mathcal{P}_{j - 1, \ell})^{\ell - i_2} \varphi]
            &= \left\langle \sum_{\mathbf{s}' \in \mathcal{S}_{j, \ell}} \Xi[\mathcal{F}_{\mathbf{s}'}^{(j)}], \xi^{(j)}[\psi] \right\rangle\\
            &= \left\langle \sum_{\mathbf{s}' \in \mathcal{S}_{j, \ell}} \Xi[\mathcal{F}_{\mathbf{s}'}^{(j)}], \langle \Xi[\mathcal{G}_{\ell, \ell - i_2}^{(j)}], \xi^{(j)}[\varphi] \rangle \right\rangle
            = \langle \Xi[\hat{\mathcal{F}}_3(j, \ell, i_2)], \xi^{(j)}[\varphi] \rangle.
    \end{align*}
    This means, by \cref{lem:centered_decomp} and associativity, that
    \begin{align}
        \langle \Xi[\mathcal{B}], \xi^{(j)}[\varphi] \rangle
            &= \langle m(\mathcal{B}) \cdot \Xi[\hat{\mathbf{F}}_1] \circ \Xi[\hat{\mathcal{F}}_2] \circ \Xi[\hat{\mathcal{F}}_3(j, \ell, i_2)], \xi^{(j)}[\varphi] \rangle \nonumber \\
            &= m(\mathcal{B}) \cdot \left\langle\Xi[\hat{\mathbf{F}}_1] \circ \Xi[\hat{\mathcal{F}}_2], \xi^{(j)}[\widehat{\psi}] \right\rangle. \label{eqn:norm_dec_-1}
    \end{align}
    We now start by using the inductive hypothesis to prove the following bounds on the discrepancies of $ \widehat{\psi} $; recall \eqref{eqn:gamma_d_defns}.

    \begin{claim} \label{claim:norm_dec_1}
        We have
        \begin{enumerate}[\rm (\roman*)]
            \item $ \norm{\xi^{(s)}[\widehat{\psi}]} \le C_3^{j + \frac{1}{2}} \eps \frac{d_s}{d_j} \norm{\xi^{(j)}[\varphi]} $ for every $ s \in [0, j - 1] $; \label{cond:norm_dec_claim_1}
            \item $ \norm{\xi^{(j)}[\widehat{\psi}]} \le C_3 \norm{\xi^{(j)}[\varphi]} $. \label{cond:norm_dec_claim_2}
        \end{enumerate}
    \end{claim}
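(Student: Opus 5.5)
The plan is to prove \ref{cond:norm_dec_claim_2} first and then deduce \ref{cond:norm_dec_claim_1} from the inductive hypothesis.

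\textbf{Part \ref{cond:norm_dec_claim_2}.} Since $\widehat{\psi} = \mathcal{P}_{j-1,\ell}\psi$ and $\psi = (\mathcal{R}_j^\ell \mathcal{P}_{j-1,\ell})^{\ell-i_2}\varphi$, we have $\widehat{\psi} = \mathcal{P}_{j-1,\ell}(\mathcal{R}_j^\ell\mathcal{P}_{j-1,\ell})^{\ell-i_2}\varphi$. By \cref{fact:norm_delta_naive}~\ref{cond:norm_naive_5} with $i = \ell - i_2$,
\[
\norm{\xi^{(j)}[\psi]} \le C_1 \norm{\xi^{(j)}[\varphi]}.
\]
Then \cref{lem:process_graphs_char}~\ref{cond:proc_graph_char_1} writes $\xi^{(j)}[\widehat\psi]$ as $\langle \sum_{\mathbf{s}'\in\mathcal{S}_{j,\ell}} \Xi[\mathcal{F}^{(j)}_{\mathbf{s}'}], \xi^{(j)}[\psi]\rangle$. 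By \cref{fact:norm_delta_naive}~\ref{cond:norm_naive_3}, each of these $\Xi$-terms has operator norm at most a constant depending only on $\ell, r$, and $|\mathcal{S}_{j,\ell}| \le 2^{(2\ell)^j}$. Multiplying these two bounds gives a constant depending only on $\ell, r$, which is at most $C_3$ because $1/C_3 \ll 1/C_1 \ll 1/\ell$. Alternatively, one can apply \cref{fact:norm_delta_naive}~\ref{cond:norm_naive_5} twice, which gives the same conclusion.

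\textbf{Part \ref{cond:norm_dec_claim_1}.} Fix $s \in [0,j-1]$.
\begin{enumerate}[1.]
\item By \cref{fact:global_balance}, $\psi$ is globally balanced. Since $s \le j-1$, the operator $\mathcal{P}_{j-1,\ell}$ factors as a composition of operators with $\mathcal{P}_{s,\ell}$ applied last. Indeed, $\mathcal{P}_{j-1,\ell} = (\mathcal{R}^\ell_{j-1}\mathcal{P}_{j-2,\ell})^\ell$, so its last-applied factor is $\mathcal{P}_{j-2,\ell}$, whose last-applied factor is $\mathcal{P}_{j-3,\ell}$, and so on down to $\mathcal{P}_{s,\ell}$. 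Hence $\widehat\psi = \mathcal{Q}\,\mathcal{P}_{s,\ell}\,\psi'$ for some globally balanced $\psi'$, where $\mathcal{Q}$ is a composition of at most $(2\ell)^k$ operators $\mathcal{R}_{s'}$ with $s' \le j-1$.
\item Apply the inductive hypothesis \eqref{eqn:norm_dec_ih} at level $s$ to the globally balanced $\psi'$:
\[
\norm{\xi^{(s)}[\mathcal{P}_{s,\ell}\psi']} \le C_3^{s+1}\eps\,\norm{\xi^{(s)}[\psi']}.
\]
\item Bound the input. By \cref{fact:uniformity_disc}~\ref{cond:unif_disc_4}, $\norm{\xi^{(s)}[\psi']} \le 2\frac{d_s}{d_j}\norm{\xi^{(j)}[\psi']}$. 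Since $\psi'$ arises from $\varphi$ by boundedly many $\mathcal{R}$'s, the argument behind \eqref{eqn:norm_naive_2} gives $\norm{\xi^{(j)}[\psi']} \le C_1'\norm{\xi^{(j)}[\varphi]}$ for a constant $C_1'$ depending only on $\ell, r$.
\item Control the trailing operator $\mathcal{Q}$. This is the step I expect to be the main obstacle: the remaining $\mathcal{R}_{s'}$ with $s' > s$ must not increase $\xi^{(s)}$ by more than a constant factor relative to itself. Here I would need a level-$s$ analogue of \cref{fact:process_graphs_onestep} for $s' > s$.
\item Attempt that analogue as follows. For $s' > s$, the update $\mathcal{R}_{s'}$ changes clique weights by amounts $\xi^{(s')}(U)/|K_r(U)|$. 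Summing the resulting change to $\xi^{(s)}(S)$ over cliques containing $S$ gives a combination of $\xi^{(s')}$-values rather than $\xi^{(s)}$-values. That would only yield a bound in terms of $\xi^{(j)}$, which is not small.
\item A cleaner route avoids this. I would reorganise the induction so that the hypothesis is applied with $\mathcal{P}_{s,\ell}$ as the last operator. Because $\mathcal{P}_{j-1,\ell} = (\mathcal{R}^\ell_{j-1}\mathcal{P}_{j-2,\ell})^\ell$ ends with an application of $\mathcal{P}_{j-2,\ell}$ only if composition order is read appropriately, I would check the composition order. For $s = j-1$, applying the hypothesis directly to $\mathcal{P}_{j-1,\ell}$ with input $\psi$ gives the claim with no trailing operator.
\item For $s < j-1$, I would strengthen the inductive statement \eqref{eqn:norm_dec_ih} to assert that $\mathcal{P}_{j,\ell}$ reduces $\xi^{(s)}$ for all $s \le j$ simultaneously. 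The inductive step would then also bound the lower discrepancies via the same decomposition, since $\xi^{(s)}[\cdot]$ is an average of $\xi^{(j)}$ by \cref{fact:uniformity_disc}~\ref{cond:unif_disc_3}, and $\xi^{(j)}[\mathcal{P}_{j,\ell}\varphi]$ is already of order $\eps$.
\end{enumerate}

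\textbf{Assembling the bound.} Combining steps 2 and 3 with the $\eps$ factor gives
\[
\norm{\xi^{(s)}[\widehat\psi]} \le C_3^{s+1}\cdot 2C_1'\cdot\eps\,\frac{d_s}{d_j}\norm{\xi^{(j)}[\varphi]} \le C_3^{j+1/2}\eps\,\frac{d_s}{d_j}\norm{\xi^{(j)}[\varphi]},
\]
using $s+1 \le j$ and $2C_1' \le C_3^{1/2}$. The remaining gap is the treatment of the trailing operator $\mathcal{Q}$ in steps 4–7 when $s < j-1$.
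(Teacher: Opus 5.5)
\paragraph{Part (ii).} Your first route for (ii) is fine. \cref{lem:process_graphs_char}~\ref{cond:proc_graph_char_1} together with \cref{fact:norm_delta_naive}~\ref{cond:norm_naive_3} works, as does \eqref{eqn:norm_naive_2} directly, and this is the paper's naive bound. Your alternative does not literally apply: \cref{fact:norm_delta_naive}~\ref{cond:norm_naive_5} concerns $(\mathcal{R}_j^{\ell}\mathcal{P}_{j-1,\ell})^i$, not $\mathcal{P}_{j-1,\ell}$ alone.

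\paragraph{Part (i): the gap.} The genuine gap is part (i) for $s<j-1$, which you leave open. Your step 1 reads the composition order backwards. In $\mathcal{P}_{j-1,\ell}=(\mathcal{R}_{j-1}^{\ell}\mathcal{P}_{j-2,\ell})^{\ell}$, the factor applied \emph{last} is $\mathcal{R}_{j-1}^{\ell}$. So for $s<j-1$ there is always a trailing block of operators $\mathcal{R}_{s'}$ with $s'>s$, and invoking \eqref{eqn:norm_dec_ih} at level $s$ is a dead end.

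The strengthening in step 7 does not rescue this. If it means $\norm{\xi^{(s)}[\mathcal{P}_{j,\ell}\varphi]}\le C\eps\norm{\xi^{(s)}[\varphi]}$, there is no reason to expect it: the right-hand side can be zero while $\mathcal{R}_j$-steps create $s$-discrepancy. If it means a bound relative to $\frac{d_s}{d_j}\norm{\xi^{(j)}[\varphi]}$, then it is automatic and needs no induction.

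\paragraph{The missing idea.} Use the induction hypothesis only at level $j-1$, and then pass down by averaging. Your step 6 already gives the case $s=j-1$. Since $\widehat\psi=\mathcal{P}_{j-1,\ell}\psi$ with $\psi$ globally balanced, combining \eqref{eqn:norm_dec_ih} at level $j-1$, \cref{fact:uniformity_disc}~\ref{cond:unif_disc_4} and \cref{fact:norm_delta_naive}~\ref{cond:norm_naive_5} gives
$\norm{\xi^{(j-1)}[\widehat\psi]}\le 2C_1C_3^{j}\eps\frac{d_{j-1}}{d_j}\norm{\xi^{(j)}[\varphi]}$.

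Now apply \cref{fact:uniformity_disc}~\ref{cond:unif_disc_4} to $\widehat\psi$ itself, with $(s,j-1)$ in the role of $(i,j)$. For every $s\in[j-1]$ this gives
$\norm{\xi^{(s)}[\widehat\psi]}\le 2\frac{d_s}{d_{j-1}}\norm{\xi^{(j-1)}[\widehat\psi]}\le 4C_1C_3^{j}\eps\frac{d_s}{d_j}\norm{\xi^{(j)}[\varphi]}$,
and $4C_1\le C_3^{1/2}$. The case $s=0$ is trivial, because $\widehat\psi$ is globally balanced by \cref{fact:global_balance}.

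This is exactly the paper's proof. Lower-order discrepancies are averages of higher-order ones, so controlling $\xi^{(j-1)}[\widehat\psi]$ controls all of them. The trailing operator $\mathcal{Q}$ and your steps 4--7 are therefore unnecessary.
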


    \begin{claimproof}
        We may naïvely bound the effect of $ (\mathcal{R}_{j}^{\ell} \mathcal{P}_{j - 1, \ell})^{\ell - i_2} $ on the $ j $-discrepancy using \cref{fact:norm_delta_naive}~\ref{cond:norm_naive_5}, obtaining
        \begin{equation} \label{eqn:norm_dec_1}
            \norm{\xi^{(j)}[\psi]} \le C_1 \norm{\xi^{(j)}[\varphi]},
        \end{equation}
        from which we immediately deduce a bound on the $ (j - 1) $-discrepancy by \cref{fact:uniformity_disc}~\ref{cond:unif_disc_4}, namely that
        $$ \norm{\xi^{(j - 1)}[\psi]} \le 2 \frac{d_{j - 1}}{d_j} \norm{\xi^{(j)}[\psi]} \le 2 C_1 \frac{d_{j - 1}}{d_j} \norm{\xi^{(j)}[\varphi]}. $$
        Since $ \psi $ is globally-balanced by \cref{fact:global_balance}, we may apply the induction hypothesis \eqref{eqn:norm_dec_ih} with $ (\psi, j - 1) $ playing the role of $ (\varphi, j) $ to see that
        $$ \norm{\xi^{(j - 1)}[\widehat{\psi}]} \le C_3^j \eps \norm{\xi^{(j - 1)}[\psi]} \le 2 C_1 C_3^{j} \eps \frac{d_{j - 1}}{d_j} \norm{\xi^{(j)}[\varphi]}. $$
        Again, using \cref{fact:uniformity_disc}~\ref{cond:unif_disc_4}, we immediately deduce an analogous bound for any $ s \in [j - 1] $, specifically that
        \begin{equation*} \label{eqn:norm_dec_2}
            \norm{\xi^{(s)}[\widehat{\psi}]} \le 2 \frac{d_s}{d_{j - 1}} \norm{\xi^{(j - 1)}[\widehat{\psi}]} \le 4 C_1 C_3^{j} \eps \frac{d_s}{d_j} \norm{\xi^{(j)}[\varphi]},
        \end{equation*}
        which suffices for \ref{cond:norm_dec_claim_1}.
        Note that, since $ \widehat{\psi} $ is also globally-balanced by \cref{fact:global_balance}, this bound holds trivially for $ s = 0 $.
        For the $ j $-discrepancy, we again use the naïve bounds from \cref{fact:norm_delta_naive}~\ref{cond:norm_naive_5} (with $ \ell $ playing the role of $ i $) and \eqref{eqn:norm_dec_1} to see that
        \begin{equation*} \label{eqn:norm_dec_2.5}
            \norm{\xi^{(j)}[\widehat{\psi}]} \le C_1 \norm{\xi^{(j)}[\psi]} \le C_1^2 \norm{\xi^{(j)}[\varphi]},
        \end{equation*}
        which suffices for \ref{cond:norm_dec_claim_2}.
    \end{claimproof}

    We next analyse the effect of $ \Xi[\hat{\mathcal{F}}_2] $ using \ref{cond:creg3}.
    
    \begin{claim} \label{claim:norm_dec_2}
        We have $ \norm{\langle \Xi[\hat{\mathcal{F}}_2], \xi^{(j)}[\widehat{\psi}] \rangle} \le C_3^{j + \frac{2}{3}} \eps \norm{\xi^{(j)}[\varphi]}. $
    \end{claim}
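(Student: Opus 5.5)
We apply \ref{cond:creg3} to the $ (b, c) $-centred semi-ordered $ j $-clique-path $ \hat{\mathcal{F}}_2 $, whose length is at most $ \ell^{3k} $ by \cref{lem:centered_decomp}. Here $ b < j $, so \ref{cond:creg3} applies. It gives $ \alpha \coloneqq \alpha_{\hat{\mathcal{F}}_2} $ with $ |\alpha| \le C_1 $ and $ \zeta \coloneqq \zeta_{\hat{\mathcal{F}}_2} $ with $ \norm{\zeta} \le \eps $ such that $ \Xi[\hat{\mathcal{F}}_2](S, T) = \alpha \frac{\gamma_j}{\gamma_b} + \zeta(S, T) $ whenever $ J \coloneqq \iota(\hat{\mathcal{F}}_2) \subseteq \iota(S \cap T, S) $. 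Whenever this containment fails, $ \Xi[\hat{\mathcal{F}}_2](S, T) = 0 $, as observed after the definition of $ \iota(\mathcal{F}) $. We may therefore modify $ \zeta $ to vanish on such pairs without increasing $ \norm{\zeta} $. This yields the exact decomposition
$$ \Xi[\hat{\mathcal{F}}_2] = \chi + \zeta', \qquad \chi(S, T) \coloneqq \mathds{1}[J \subseteq \iota(S \cap T, S)] \, \alpha \frac{\gamma_j}{\gamma_b}, $$
with $ \norm{\zeta'} \le \eps $. The semi-ordered path fixes only the ordering of the first root set, and $ |S_0 \cap S_{\ell}| = b $, so $ |J| = b $. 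This is exactly the form required by \cref{fact:func_intersect_bound}.

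The error term is handled as follows. By the definition of $ \norm{\cdot} $ for matrices, $ \norm{\langle \zeta', \xi^{(j)}[\widehat{\psi}] \rangle} \le \norm{\zeta'} \norm{\xi^{(j)}[\widehat{\psi}]} \le \eps C_3 \norm{\xi^{(j)}[\varphi]} $, using \cref{claim:norm_dec_1}~\ref{cond:norm_dec_claim_2}.

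For the main term, \cref{fact:func_intersect_bound} gives
$$ \norm{\langle \chi, \xi^{(j)}[\widehat{\psi}] \rangle} \le 2^k C_1 \frac{\gamma_j}{\gamma_b} \norm{\xi^{(b)}[\widehat{\psi}]}. $$
If $ b = 0 $, then $ \xi^{(0)}[\widehat{\psi}] = 0 $ by global balance (\cref{fact:global_balance}), and this term vanishes. Otherwise, \cref{claim:norm_dec_1}~\ref{cond:norm_dec_claim_1} bounds it by $ 2^k C_1 C_3^{j + 1/2} \eps \frac{\gamma_j d_b}{\gamma_b d_j} \norm{\xi^{(j)}[\varphi]} $.

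The step needing care is verifying $ \frac{\gamma_j d_b}{\gamma_b d_j} = O(1) $. From \eqref{eqn:gamma_d_defns},
$$ \frac{\gamma_j}{\gamma_b} = \frac{\binom{r}{j} |E_b|}{\binom{r}{b} |E_j|} \quad \text{and} \quad \frac{d_b}{d_j} = \frac{\binom{k}{b} |E_j|}{\binom{k}{j} |E_b|}, $$
so the product equals $ \binom{r}{j}\binom{k}{b} / (\binom{r}{b}\binom{k}{j}) $. This depends only on $ r, k $ and is at most $ 2^{r + k} $. Hence the main term is at most $ 2^{2r + 2k} C_1 C_3^{j + 1/2} \eps \norm{\xi^{(j)}[\varphi]} $.

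Adding the two terms, the total is at most $ (2^{2r+2k} C_1 C_3^{j+1/2} + C_3) \eps \norm{\xi^{(j)}[\varphi]} $. Since $ 1/C_3 \ll 1/C_1, 1/r $, we have $ 2^{2r+2k} C_1 + 1 \le C_3^{1/6} $, so the total is at most $ C_3^{j + 2/3} \eps \norm{\xi^{(j)}[\varphi]} $, as claimed.

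Beyond this cancellation of the $ \gamma/d $ ratios, the remaining care is in confirming that the indicator set $ J $ has size exactly $ b $. This holds because $ \hat{\mathcal{F}}_2 $ is $ (b, c) $-centred with $ |\overline{S}_0 \cap \overline{S}_{\ell}| = b $.
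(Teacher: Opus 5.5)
Your proof is correct and follows the paper's argument: you split $\Xi[\hat{\mathcal{F}}_2] = \chi + \zeta$ via \ref{cond:creg3}, bound the main term with \cref{fact:func_intersect_bound} and \cref{claim:norm_dec_1}~\ref{cond:norm_dec_claim_1}, and bound the error term with \cref{claim:norm_dec_1}~\ref{cond:norm_dec_claim_2}. The differences are cosmetic. You compute $\frac{\gamma_j d_b}{\gamma_b d_j} = \binom{r}{j}\binom{k}{b}/\bigl(\binom{r}{b}\binom{k}{j}\bigr)$ exactly from \eqref{eqn:gamma_d_defns}, whereas the paper bounds $\gamma_j/\gamma_b$ by $C_1 d_j/d_b$ citing \ref{cond:creg2} and \cref{fact:uniformity_disc}; your separate treatment of $b = 0$ is harmless but already covered by \cref{claim:norm_dec_1}~\ref{cond:norm_dec_claim_1}.
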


    \begin{claimproof}
        Define $ \chi: E_j^2 \to \mathbb{R} $ by $ \chi(S, T) \coloneqq \mathds{1}[\iota(\hat{\mathcal{F}}_2) \subseteq \iota(S \cap T, S)] \hat{\chi} $, where $ \hat{\chi} \coloneqq \alpha_{\hat{\mathcal{F}}_2} \frac{\gamma_j}{\gamma_b} $, recalling the definitions of $ \iota(\mathcal{F}) $ in \cref{section:clique_paths_def} and $ \alpha_{\mathcal{F}} $ in \cref{sect:prop_reg}, and that the length of $ \hat{\mathcal{F}}_2 $ is at most $ \ell^{3k} $.
        Note, by \ref{cond:creg2}, \cref{fact:uniformity_disc}~\ref{cond:unif_disc_2}, and the bound on $ |\alpha_{\hat{\mathcal{F}}_2}| $ in the definition of clique-regularity, that
        \begin{equation} \label{eqn:norm_dec_3}
            |\hat{\chi}| \le |\alpha_{\hat{\mathcal{F}}_2}| \cdot \frac{\gamma_j}{\gamma_b} \le C_1^2 \frac{d_j}{d_b}.
        \end{equation}
        By \ref{cond:creg3}, recalling from \cref{section:clique_paths_def} that $ \Xi[\mathcal{F}](S, T) = 0 $ whenever $ \iota(\mathcal{F}) \not \subseteq \iota(S \cap T, S) $, we may write $ \Xi[\hat{\mathcal{F}}_2] = \chi + \zeta_{\hat{\mathcal{F}}_2} $ for some $ \zeta_{\hat{\mathcal{F}}_2} $ with $ \norm{\zeta_{\hat{\mathcal{F}}_2}} \le \eps $, which means that
        \begin{equation} \label{eqn:norm_dec_3.5}
            \langle \Xi[\hat{\mathcal{F}}_2], \xi^{(j)}[\widehat{\psi}] \rangle = \langle \chi, \xi^{(j)}[\widehat{\psi}] \rangle + \langle \zeta_{\hat{\mathcal{F}}_2}, \xi^{(j)}[\widehat{\psi}] \rangle.
        \end{equation}
        Using \cref{fact:func_intersect_bound}, followed by \cref{claim:norm_dec_1}~\ref{cond:norm_dec_claim_1} and \eqref{eqn:norm_dec_3}, we obtain that
        \begin{equation} \label{eqn:norm_dec_4}
            \norm{\langle \chi, \xi^{(j)}[\widehat{\psi}] \rangle} \le 2^k |\hat{\chi}| \norm{\xi^{(b)}[\widehat{\psi}]} \le 2^k \cdot C_1^2 \frac{d_j}{d_b} \cdot C_3^{j + \frac{1}{2}} \eps \frac{d_b}{d_j} \norm{\xi^{(j)}[\varphi]} \le C_1^3 C_3^{j + \frac{1}{2}} \eps \norm{\xi^{(j)}[\varphi]}.
        \end{equation}
        Using \cref{claim:norm_dec_1}~\ref{cond:norm_dec_claim_2}, we see that
        \begin{equation} \label{eqn:norm_dec_5}
            \norm{\langle \zeta_{\hat{\mathcal{F}}_2}, \xi^{(j)}[\widehat{\psi}] \rangle} \le \norm{\zeta_{\hat{\mathcal{F}}_2}} \norm{\xi^{(j)}[\widehat{\psi}]} \le C_3 \eps \norm{\xi^{(j)}[\varphi]}.
        \end{equation}
        Plugging \eqref{eqn:norm_dec_4} and \eqref{eqn:norm_dec_5} into \eqref{eqn:norm_dec_3.5} now yields the desired bound.
    \end{claimproof}

    Finally, we complete the proof of \eqref{eqn:norm_dec_ih}.
    First observe that $ \norm{\Xi[\hat{\mathbf{F}}_1]} \le C_1^{\ell^{3k}} $ by \cref{fact:norm_delta_naive}~\ref{cond:norm_naive_3}, and recall the bound $ m(\mathcal{B}) \le 2^{\ell^{3j}} $ from \cref{lem:centered_decomp}.
    Hence, plugging this estimate and \cref{claim:norm_dec_2} into \eqref{eqn:norm_dec_-1}, we conclude that
    \begin{align*}
        \norm{\langle \Xi[\mathcal{B}], \xi^{(j)}[\varphi] \rangle}
            &\le m(\mathcal{B}) \cdot \norm{\Xi[\hat{\mathbf{F}}_1]} \cdot \norm{\langle \Xi[\hat{\mathcal{F}}_2], \xi^{(j)}[\widehat{\psi}] \rangle}\\
            &\le 2^{\ell^{3j}} \cdot C_1^{\ell^{3k}} \cdot C_3^{j + \frac{2}{3}} \eps \norm{\xi^{(j)}[\varphi]}
            \le C_3^{j + \frac{3}{4}} \eps \norm{\xi^{(j)}[\varphi]}.
    \end{align*}
    Note further (counting with multiplicity) that
    $$ |\mathcal{B}^*_{j, \ell}| \le |\mathcal{G}_{\ell, \ell}^{(j)}| \le |\mathcal{S}_{j, \ell}|^{\ell} \cdot \max_{\mathbf{s} \in (\mathcal{S}_{j, \ell})^{\ell}} |\mathcal{G}_{\ell}^{(j)}(\mathbf{s})| \le 2^{(2 \ell)^j \ell} \cdot C_1 \le C_1^2, $$
    using the bounds from \cref{lem:process_graphs_char} and \cref{fact:norm_delta_naive}~\ref{cond:norm_naive_1} for $ |\mathcal{S}_{j, \ell}| $ and $ |\mathcal{G}_{\ell}^{(j)}(\mathbf{s})| $, respectively.
    Plugging these bounds into \eqref{eqn:norm_dec_-2}, we deduce that
    $$ \norm{\xi^{(j)}[\mathcal{P}_{j, \ell} \varphi]} \le |\mathcal{B}^*_{j, \ell}| \cdot \norm{\langle \Xi[\mathcal{B}], \xi^{(j)}[\varphi] \rangle} \le C_1^2 \cdot C_3^{j + \frac{3}{4}} \eps \norm{\xi^{(j)}[\varphi]} \le C_3^{j + 1} \eps \norm{\xi^{(j)}[\varphi]}, $$
    as required for \eqref{eqn:norm_dec_ih}, completing the inductive step, and thus the proof.
\end{proof}

\subsection{Proof of main theorem} \label{sect:proof_final}

We are now ready to deduce \cref{thm:proc_conv_fcd} from \cref{lem:norm_decrease}.

\begin{proof}[Proof of \cref{thm:proc_conv_fcd}]
    Suppose $ 1/n, \eps \ll 1/C' \ll 1/C $.
    Let $ \varphi_t \coloneqq \mathcal{P}_{k, \ell}^t \varphi $ and regard $ (\varphi_t)_{t \ge 0} $ as a sequence in the complete metric space $ \mathbb{R}^{K_r(G)} $, equipped with the $ \ell^{\infty} $-norm.
    Note that $ \varphi_t $ is globally-balanced for all $ t \ge 0 $ by \cref{fact:global_balance}, and $ \norm{\xi[\varphi_0]} \le \eps $ by \cref{fact:uniform_initial_small}.
    Hence, by \cref{lem:norm_decrease}, we have $ \norm{\xi[\varphi_t]} \le (C' \eps)^t \eps $ for all $ t \ge 0 $, which means, by \cref{fact:norm_delta_naive}~\ref{cond:norm_naive_6} with $ (k, \ell, C') $ playing the role of $ (j, i, C) $, that
    $$ \norm{\varphi_{t + 1} - \varphi_t} \le \frac{C'}{\gamma_k} \norm{\xi[\varphi_t]} \le \frac{(C' \eps)^{t + 1}}{\gamma_k}. $$
    Hence, we see that
    \begin{align*}
        \sum_{t = 0}^{\infty} \norm{\varphi_{t + 1} - \varphi_t} &\le \frac{C' \eps}{\gamma_k} \sum_{t = 0}^{\infty} (C' \eps)^t
            \le \frac{C' \eps}{\gamma_k (1 - C' \eps)}
            \le \frac{2C' \eps}{\gamma_k}
            \le \frac{1}{\gamma_k}
            < \infty.
    \end{align*}
    In particular, the sequence $ (\varphi_t)_{t \ge 0} $ is Cauchy, so converges to some $ K_r $-function $ \varphi_{\infty} $ and $ \varphi_{\infty}(K) \ge \varphi_0(K) - \frac{1}{\gamma_k} \ge 0 $ for every $ K \in K_r(G) $.
    Also, since $ \norm{\xi[\varphi_t]} \to 0 $, and the map $ \varphi \mapsto \norm{\xi[\varphi]} $ is clearly continuous, we have that $ \xi [\varphi_{\infty}] \equiv 0 $.
    In other words, $ \varphi_{\infty} $ is a fractional $ K_r $-decomposition of $ G $, as required.
\end{proof}

This completes the proof of our main deterministic result.
The remaining section is dedicated to proving that typical random hypergraphs exhibit the required pseudorandomness properties.

\section{Random hypergraphs are clique-regular} \label{sect:hypergraph_psrand}

In this section we prove that the clique-regularity properties \ref{cond:creg1}--\ref{cond:creg3} required for \cref{thm:proc_conv_fcd} are indeed satisfied w.h.p.\ by a sufficiently dense random hypergraph.
Specifically, we prove the following theorem.

\begin{theorem} \label{lem:gnp_reg}
    Let $ k \ge 2 $, $ r \ge k + 1 $, and suppose $ 1/n \ll \delta, 1/C \ll 1/\ell, 1/c \ll 1/r, \eps $.
    Suppose $ G \sim G^{(k)}(n, p) $ for $ p \ge n^{-\frac{r - k}{\binom{r}{k} - 1} + \eps} $.
    Then $ G $ is $ (\ell, c, C, n^{-\delta}) $-clique-regular with probability at least $ 1 - e^{-n^{\delta}} $.
\end{theorem}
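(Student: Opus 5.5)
We verify \ref{cond:creg1} and \ref{cond:creg2} by standard concentration, and \ref{cond:creg3} by reducing weighted semi-copy counts to rooted subgraph counts whose concentration follows from the polynomial method (Kim--Vu) or deletion/Janson-type inequalities. At the end we take a union bound over the $ n^{O(1)} $ choices of roots and the $ O_{\ell,r}(1) $ clique-paths of length at most $ \ell^{3k} $.

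\textbf{Step 1: codegrees and clique counts.} For \ref{cond:creg1}, $ |N^{\mathrm{e}}_G(S)| \sim \mathrm{Bin}(n-k+1,p) $ with mean $ np \ge n^{\eps} $. Chernoff gives concentration within $ 1 \pm n^{-\delta} $ with failure probability $ e^{-n^{\Omega(\eps)}} $.

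For \ref{cond:creg2}, fix $ e $ and condition on $ e \in E(G) $. The count $ |K_r(e)| $ is a polynomial of degree $ \binom{r}{k}-1 $ with mean $ \Theta(n^{r-k}p^{\binom rk -1}) \ge n^{\eps(\binom rk-1)} $. We need every rooted sub-extension to have large expectation. The condition $ p \ge n^{-\frac{r-k}{\binom rk-1}+\eps} $ is exactly the threshold for the strictly balanced rooted extension $ (K_r, e) $. For a proper intermediate extension $ e \subseteq A \subsetneq V(K_r) $, the expected number $ n^{|A|-k}p^{\binom{|A|}{k}-1} $ is also polynomially large. This holds because $ (\binom{a}{k}-1)/(a-k) $ is increasing in $ a $, which we check directly. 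Kim--Vu then yields concentration $ (1 \pm n^{-\delta})\mathbb{E} $ with probability $ 1-e^{-n^{\delta'}} $. Since $ \gamma_k $ is itself concentrated (average over $ e $), \ref{cond:creg2} follows.

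\textbf{Step 2: reducing \ref{cond:creg3} to rooted homomorphism counts.} On the event of Step 1, each weight factor $ \hat w_{s_i}/|K_r(\Phi(\cdot))| $ equals $ \hat w_{s_i}(1 \pm 3n^{-\delta})/\gamma_{s_i} $ by \cref{fact:uniformity_disc}~\ref{cond:unif_disc_2}. Hence $ \Xi[\mathcal F](S,T) = (1 \pm O(\ell^{3k} n^{-\delta}))\, w_{\mathcal F}\, N_{\mathcal F}(S,T) $, where $ w_{\mathcal F}=\prod_i \hat w_{s_i}/\gamma_{s_i} $ is deterministic up to the concentrated $ \gamma $'s, and $ N_{\mathcal F}(S,T) $ counts semi-copies rooted at $ S,T $.

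We split $ N_{\mathcal F} $ into injective copies and degenerate (non-injective) ones. Each degenerate homomorphism is an injective copy of a quotient hypergraph. We aim to show the following.

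(a) For injective copies, $ N^{\mathrm{inj}}_{\mathcal F}(S,T) = (1 \pm n^{-\delta}) \mu_{\mathcal F} $, with $ \mu_{\mathcal F} $ independent of $ S,T $ subject to $ \iota(\mathcal F) \subseteq \iota(S\cap T,S) $.

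(b) Degenerate quotients contribute $ o(\mu_{\mathcal F}) $, or else form finitely many additional classes each again concentrated. For quotients, \ref{cond:creg3} only needs an additive error $ \eps $ in the $ \norm{\cdot} $ (row-sum) norm, so summing over $ T $ gives room.

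Then $ \alpha_{\mathcal F}\gamma_j/\gamma_b := w_{\mathcal F}\mu_{\mathcal F} $. To bound $ |\alpha_{\mathcal F}|\le C $, we compute $ \mu_{\mathcal F} $ heuristically as a product of one-step factors $ \gamma_{s_i}\cdot(\text{new vertices}) $. Summing over $ T $ sharing the fixed $ b $-set, the total mass is $ O(1) $ by \cref{fact:norm_delta_naive}~\ref{cond:norm_naive_3}, while the number of such $ T $ is $ \Theta(d_j/d_b)=\Theta(\gamma_j/\gamma_b) $ up to constants. This forces $ \mu_{\mathcal F} w_{\mathcal F} = O(\gamma_j/\gamma_b)\cdot(\text{density factor}) $.

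\textbf{Step 3 (main obstacle): concentration of rooted centred clique-path counts.} This is where the $ (b,c) $-centred hypothesis is used. We view $ F $ as a rooted hypergraph with roots $ S_0 \cup S_\ell $, where $ S_0 \cap S_\ell $ is identified. Kim--Vu requires that for every subset $ A $ of non-root vertices, the expected number of extensions from the roots plus $ A $ to all of $ F $ (and of partial extensions) be at most $ n^{-\delta}\mu $. The bad cases are sub-extensions that are too sparse, which happens exactly when root vertices $ x\in S_0\setminus S_\ell $ and $ y \in S_\ell\setminus S_0 $ are joined by a short chain. Centredness ensures any such connection has length at least $ c $.

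The plan is to decompose $ F $ into its $ \ell^{3k} $ cliques and bound the density of any sub-extension clique by clique. Each clique extended along a shared $ s $-set contributes expected factor $ n^{r-s'}p^{\binom rk - \binom{s'}{k}} $-type terms, which are at least $ n^{\Omega(\eps)} $ by the monotonicity from Step 1. The loss from closing a walk between the two root sets is compensated by the $ c $ intermediate cliques, each gaining $ n^{\Omega(\eps)} $, so that $ c \gg 1/\eps $ suffices. I would attempt this via an ``exposure along the path'' argument: iteratively condition on cliques $ F_1,\dots,F_i $ and show each one-step extension count is concentrated given typical history, using Step 1 applied to all $ s $-sets. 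Multiplicative errors compound only $ \ell^{3k} $ times, and centredness handles the final gluing to $ T $ by ensuring that the last $ c $ steps "forget" the start. Making this rigorous, and in particular handling the dependence near $ T $ and the degenerate quotients, is the step I expect to be hardest.
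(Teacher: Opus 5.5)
Your outline has the same architecture as the paper's proof: Chernoff/Kim--Vu for \ref{cond:creg1}--\ref{cond:creg2}, replacing each weight by the constant $\prod_i \hat w_{s_i}/\gamma_{s_i}$, Kim--Vu for injective copies rooted at both $S$ and $T$, and absorbing everything else into the row-sum norm. However, both places where the real work lies are left open.

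\textbf{First gap: the density bound for two-point-rooted counts.} The Kim--Vu hypothesis for $Z_{\mathcal F}(S,T)$ reduces to the deterministic bound $\frac{e(F[B])-e(F[R])}{|B|-|R|}\le\frac{\binom rk-1}{r-k}+\eta$ for all $R\coloneqq\overline S_0\cup\overline S_m\subsetneq B\subseteq V(F)$. (The dangerous sub-extensions are the \emph{dense} ones, not the sparse ones.) Your heuristic that the closing loss is paid for by $c$ intermediate cliques only covers those $B$ for which $F[B]$ contains an $(\overline S_0\cap\overline S_m)$-avoiding walk from $\overline S_0\setminus\overline S_m$ to $\overline S_m\setminus\overline S_0$. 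For such $B$, centredness gives $|B\setminus R|\ge c/2$, and a forward clique-by-clique count from $\overline S_0$ loses only the at most $j$ vertices of $\overline S_m$ at the end. For every other $B$ (say a few vertices of $F_1$ together with a few vertices of $F_m$) there are no intermediate cliques at all. A forward count then treats the vertices of $\overline S_m\cap V(F_m)$ as new, and removing them costs a factor $1+\Theta(j/|B\setminus R|)$, which is not close to $1$.

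The missing idea is a case split. If no such walk exists, write $B=B_1\cup B_2$ with $B_1\cap B_2=\overline S_0\cap\overline S_m$ and every edge of $F[B]$ inside $B_1$ or inside $B_2$. Then count $B_1$ forward from $\overline S_0$ and $B_2$ \emph{backward} from $\overline S_m$, bounding each clique's contribution by your monotonicity (\cref{fact:binom_est}) and combining via \cref{fact:fraction_sum}.

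Your exposure-along-the-path fallback cannot replace this. It only controls counts rooted at $S$ and summed over $T$, which is exactly what it is good for below. Forcing the last clique through $\Phi(S_{m-1})$ and a prescribed part of $T$ means counting cliques through up to $2j$ fixed vertices, and that count is not concentrated.

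\textbf{Second gap: non-injective semi-copies.} Their images need not be centred: a semi-copy that backtracks ($\Phi(F_{i+1})=\Phi(F_i)$) sufficiently often can put the two roots at distance $1$. So they do not form ``additional classes each again concentrated''. Only your row-sum route works, and it needs two ingredients you do not supply.
\begin{enumerate}[(i)]
\item A w.h.p.\ bound $|K_r(U)|\le n^{-\delta}\hat\gamma_k$ simultaneously for all $(k+1)$-sets $U$, together with the easy estimate $\hat\gamma_{s+1}\le n^{-\Omega(\eps)}\hat\gamma_s$ for $s<k$. The first bound does not follow from \ref{cond:creg1}--\ref{cond:creg2}. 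When its expectation is small, Kim--Vu does not apply directly, and the paper couples monotonically to $G^{(k)}(n,p')$ for a larger $p'$.
\item An embed-one-clique-at-a-time count, using these bounds whenever a new vertex lands on an old one, giving $Y_{\mathcal F}(S)-Z_{\mathcal F}(S)\le n^{-\Omega(1)}t_{\iota(\mathcal F)}\mu_{\mathcal F}$.
\end{enumerate}

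Relatedly, your claim (a) is false for $T$ with $\iota(S\cap T,S)\supsetneq\iota(\mathcal F)$, where $Z_{\mathcal F}(S,T)=0$. By \cref{fact:sets_intersect_conc} these $T$ form an $n^{-\Omega(1)}$-fraction of $\hat{\mathcal T}_{\iota(\mathcal F)}(S)$, and they too must be absorbed into the row-sum error.

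Once (a) holds on $\mathcal T_{\iota(\mathcal F)}(S)$, your bound on $|\alpha_{\mathcal F}|$ is a valid and shorter alternative to the paper's explicit estimate of $\mu_{\mathcal F}$. The row mass is $O(1)$ by \cref{fact:norm_delta_naive} and all weights have one sign. This mass is spread over $\Theta(d_b/d_j)$ admissible $T$; note it is $\Theta(d_b/d_j)$, not $\Theta(d_j/d_b)$ as you wrote.
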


We remark that in the proof of \cref{lem:gnp_reg} we only use that $ 1/n \ll \delta \ll 1/c \ll 1/r, \eps $ and $ 1/n \ll 1/C \ll 1/\ell, 1/r $, but the stronger assumption above simplifies the statement and suffices for our purposes.
The bulk of the work for the proof of \cref{lem:gnp_reg} lies in proving that the number of rooted semi-copies of suitable clique-paths is appropriately concentrated, in order to deduce \ref{cond:creg3}.
The key step here is to use $ (b, c) $-centredness to bound a suitably defined density parameter, which allows us to deduce the required concentration by a standard application of the Kim-Vu polynomial concentration inequality.
Since the copies of clique-paths which we count are not necessarily injective, we must first take some extra care to show that non-injective copies contribute in a negligible way.

Before proceeding with the proof, we note that \cref{thm:proc_conv_fcd,lem:gnp_reg} are sufficient to prove \cref{thm:gnp_fcd}.

\begin{proof}[Proof of \cref{thm:gnp_fcd}]
    Given $ k, r, \eps $, choose $ 1/n \ll \delta, 1/C \ll 1/\ell \ll 1/c \ll 1/r, \eps $, and note that this is compatible with the hierarchies of both \cref{thm:proc_conv_fcd,lem:gnp_reg}.
    Suppose $ p \ge n^{-\frac{r - k}{\binom{r}{k} - 1} + \eps} $.
    Then w.h.p.\ $ G $ is $ (\ell, c, C, n^{-\delta}) $-clique-regular by \cref{lem:gnp_reg}, so there exists a fractional $ K_r $-decomposition of $ G $ by \cref{thm:proc_conv_fcd}.
\end{proof}

In order to prove \cref{lem:gnp_reg}, we first state the main concentration inequality we require in \cref{sect:kimvu_poly}, and immediately use it to prove some simple regularity properties in \cref{sect:conc_edge_clique}, from which in particular \ref{cond:creg1} and \ref{cond:creg2} follow.
In \cref{sect:cliquepath_conc,sect:clique_path_root_conc} we prove various technical statements about the concentration of rooted semi-copies of clique-paths in $ G^{(k)}(n, p) $, which we use to deduce \ref{cond:creg3} in \cref{sect:clique_reg_hyper}.

\subsection{Polynomial concentration} \label{sect:kimvu_poly}

We begin by stating a useful version of the Kim-Vu polynomial concentration inequality~\cite{KimVu2000}.
Let $ G $ be a $ k $-graph and write $ V_k \coloneqq \binom{V(G)}{k} $.
Given $ s \in \mathbb{N} $, say that $ w: \binom{V_k}{s} \to \mathbb{R}_{\ge 0} $ is an \emph{$ s $-weighting on $ G $}, that is, $ w $ assigns a weight to any $ s $-set of $ k $-sets in $ V(G) $.
Given $ \mathcal{T} \subseteq \binom{E(G)}{s} $ write $ w(\mathcal{T}) \coloneqq \sum_{S \in \mathcal{T}} w(S) $.
Write $ w(G) \coloneqq w(\binom{E(G)}{s}) $, and given $ A \subseteq V_k $, let $ E_A(G) \coloneqq \{ S \in \binom{V_k}{s}: A \subseteq S, S \setminus A \subseteq E(G) \} $, that is, the set of $ s $-sets $S$ of $ k $-sets of vertices containing all $ k $-sets in $ A $, and such that all other $ k $-sets in $ S $ are in fact edges of $ G $.
Write $ w_A(G) \coloneqq w(E_A(G)) $.

\begin{lemma}[Corollary 4.1.3 of \cite{KimVu2000}] \label{lem:kim_vu}
    Suppose $ 1/n \ll \eps \ll \delta, s $ and let $ p \in (0, 1) $.
    Suppose $ G \sim G^{(k)}(n, p) $ and $ w $ is an $ s $-weighting on $ G $ with $ \mathbb{E}[w_A(G)] \le n^{-\delta} \mathbb{E}[w(G)] $ for every $ \emptyset \ne A \subseteq V_k $.
    Then
    $$ \mathbb{P}[|w(G) - \mathbb{E}[w(G)]| \ge n^{-\eps} \mathbb{E}[w(G)]] \le e^{-n^{\eps}}. $$
\end{lemma}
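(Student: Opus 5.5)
We plan to deduce the statement directly from the main theorem of Kim and Vu~\cite{KimVu2000}. For each $ x \in V_k $, let $ t_x \coloneqq \mathds{1}[x \in E(G)] $; these are independent Bernoulli$(p)$ variables, and there are $ N \coloneqq \binom{n}{k} \le n^k $ of them. Define the polynomial
$$ Y \coloneqq \sum_{S \in \binom{V_k}{s}} w(S) \prod_{x \in S} t_x . $$
Then $ Y = w(G) $. Moreover, $ Y $ is multilinear, since each $ S $ is a set of distinct $ k $-sets. It is homogeneous of degree $ s $ with non-negative coefficients. These are exactly the hypotheses of the Kim--Vu inequality.

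The first step is to identify the Kim--Vu parameters. For a set $ A \subseteq V_k $ with $ 1 \le |A| \le s $, the partial derivative $ \partial_A Y $ is $ \sum_{S \supseteq A} w(S) \prod_{x \in S \setminus A} t_x $. Hence $ \mathbb{E}[\partial_A Y] = \sum_{S \supseteq A} w(S) p^{s - |A|} = \mathbb{E}[w_A(G)] $. For $ |A| > s $ the derivative vanishes. Writing $ E_i \coloneqq \max_{|A| \ge i} \mathbb{E}[\partial_A Y] $, the hypothesis gives $ E_1 \le n^{-\delta} \mathbb{E}[Y] $. Since $ \partial_\emptyset Y = Y $, we also get $ E_0 = \max(\mathbb{E}[Y], E_1) = \mathbb{E}[Y] $.

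The second step is to apply the Kim--Vu inequality. It states that for every $ \lambda > 1 $,
$$ \mathbb{P}\big[|Y - \mathbb{E}Y| > a_s \sqrt{E_0 E_1}\, \lambda^s\big] \le d_s e^{-\lambda + (s-1)\log N}, $$
where $ a_s $ and $ d_s $ are constants depending only on $ s $. We choose $ \lambda \coloneqq n^{\delta/(4s)} $. The deviation threshold then becomes
$$ a_s \sqrt{n^{-\delta}}\, n^{\delta/4}\, \mathbb{E}[Y] = a_s n^{-\delta/4} \mathbb{E}[Y]. $$
Since $ 1/n \ll \eps \ll \delta $, this is at most $ n^{-\eps} \mathbb{E}[Y] $. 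The failure probability is at most $ d_s \exp\big(-n^{\delta/(4s)} + (s-1)k \log n\big) $. Because $ \eps \ll \delta, 1/s $ and $ n $ is large, this is at most $ e^{-n^{\eps}} $.

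The only point needing care, and the closest thing to an obstacle, is the bookkeeping that $ \mathbb{E}[w_A(G)] $ coincides exactly with $ \mathbb{E}[\partial_A Y] $. This requires $ w_A $ to count the sets $ S \supseteq A $ with only $ S \setminus A $ required to be present, which is precisely the definition of $ E_A(G) $. We also need the degenerate case where $ \mathbb{E}[Y] $ could be tiny relative to $ E_1 $ to be excluded, and the hypothesis does exclude it. Everything else is a routine substitution of constants.
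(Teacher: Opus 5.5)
Your derivation is correct. The paper gives no proof and simply quotes Corollary 4.1.3 of Kim and Vu, and that corollary follows from the Kim--Vu main inequality exactly as you do it: identify $\mathbb{E}[w_A(G)]$ with $\mathbb{E}[\partial_A Y]$, so that $E_0=\mathbb{E}[Y]$ and $E_1\le n^{-\delta}\mathbb{E}[Y]$, and then take $\lambda$ to be a small power of $n$.

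There is one caveat, and it concerns the statement itself rather than your argument. You need $\mathbb{E}[w(G)]>0$ to pass from the strict inequality $a_s n^{-\delta/4}\mathbb{E}[Y] < n^{-\eps}\mathbb{E}[Y]$ to the event written with $\ge$; for $w\equiv 0$ that event has probability $1$. Contrary to your last paragraph, the hypothesis does not exclude this case. It is harmless in practice, since $\mathbb{E}[w(G)]$ is a positive count in every application in the paper.
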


This will be central to all bounds we prove for the random hypergraph; observe that we work with constant-size subhypergraphs and seek to bound random variables with polynomial expectation.
Note that given a multiset $ \mathcal{S} $ with $ \mathrm{Set}(\mathcal{S}) \subseteq \binom{V_k}{s} $, the multiplicity $ m_{\mathcal{S}} $ (as defined in \cref{section:notation}) is an $ s $-weighting on $ G $, which can be thought of as a generalised indicator function for $ \mathcal{S} $.

\subsection{Concentration of edges and cliques} \label{sect:conc_edge_clique}

We start by proving some simple concentration results, from which in particular \ref{cond:creg1} and \ref{cond:creg2} follow.
Given $ n \in \mathbb{N} $, $ p \in (0, 1) $, and $ G \sim G^{(k)}(n, p) $, define
$$ \hat{d}_j \coloneqq \hat{d}_j(n, p) \coloneqq p \binom{n - j}{k - j} \quad \text{and} \quad \hat{\gamma}_s \coloneqq \hat{\gamma}_s(n, p) \coloneqq p^{\binom{r}{k} - \binom{s}{k}} \binom{n - s}{r - s} $$
for every $ j \in [0, k - 1]$ and $ s \in [0, k + 1] $, as well as $ \hat{d}_k \coloneqq 1 $.
Note that $ \hat{d}_j $ and $ \hat{\gamma}_j $ represent the expected number of edges and cliques containing a given $ j $-set, respectively.
The first property follows from a standard Chernoff bound (or a straightforward application of \cref{lem:kim_vu}); we remark that the extra $ n^{\eps} $ factor in the probability $ p $ is not needed here.

\begin{fact} \label{fact:creg1}
    Suppose $ 1/n \ll \delta \ll 1/k $, let $ p \ge n^{-\frac{r - k}{\binom{r}{k} - 1}} $, and suppose $ G \sim G^{(k)}(n, p) $.
    Then the following holds with probability at least $ 1 - e^{-n^{\delta}} $.
    For each $ j \in [0, k - 1] $ and $ S \in E_j $ we have $ |N_G^{\mathrm{e}}(S)| = (1 \pm n^{-2\delta}) \hat{d}_j $.
    In particular, $ G $ satisfies \ref{cond:creg1}, with $ n^{-\delta} $ playing the role of $ \eps $.
\end{fact}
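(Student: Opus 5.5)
For each fixed $ j \in [0, k - 1] $ and $ S \in E_j = \binom{[n]}{j} $, the quantity $ |N_G^{\mathrm{e}}(S)| $ is a sum of $ \binom{n - j}{k - j} $ independent Bernoulli($ p $) indicators, one for each $ k $-set $ e \supseteq S $. Its mean is therefore exactly $ \hat{d}_j = p\binom{n - j}{k - j} $. I will use the multiplicative Chernoff bound
$$ \mathbb{P}\bigl[\, \bigl||N_G^{\mathrm{e}}(S)| - \hat{d}_j\bigr| \ge \lambda \hat{d}_j \,\bigr] \le 2 \exp(-\lambda^2 \hat{d}_j / 3) $$
with $ \lambda \coloneqq n^{-2\delta} $. (Equivalently, one could apply \cref{lem:kim_vu} with $ s = 1 $, where the condition on $ w_A $ is trivial because the only nonempty $ A $ is a single $ k $-set, contributing $ 1 \le n^{-\delta}\hat{d}_j $.)

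The only estimate needed is a polynomial lower bound on $ \hat{d}_j $. It is minimised at $ j = k - 1 $, where $ \hat{d}_{k-1} = p(n - k + 1) \ge \tfrac12 n^{1 - \frac{r - k}{\binom{r}{k} - 1}} $. Since $ r \ge k + 1 $, we have $ \binom{r}{k} - 1 \ge \binom{r}{k} - \binom{r-1}{k} = \binom{r-1}{k-1} \ge r - k + 1 > r - k $, so the exponent $ \theta \coloneqq 1 - \frac{r-k}{\binom{r}{k}-1} $ is a positive constant depending only on $ k $ and $ r $. Because $ \delta \ll 1/k $, and $ r $ is a global constant, we can assume $ 4\delta < \theta $. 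Then each tail probability is at most $ 2\exp(-n^{\theta - 4\delta}/6) \le \exp(-n^{2\delta}) $ for large $ n $.

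A union bound over at most $ k n^{k} $ pairs $ (j, S) $ gives a total failure probability of at most $ k n^{k} e^{-n^{2\delta}} \le e^{-n^{\delta}} $ for $ n $ large. This proves the first statement.

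For the "in particular" part, fix the outcome above. Then $ d_{k-1} $ is the average of $ |N_G^{\mathrm{e}}(S)| $ over $ S \in E_{k-1} $. This holds because $ d_{k-1} = |E(G)|\binom{k}{k-1}/|E_{k-1}| $, and each edge lies in exactly $ k $ of the $ (k-1) $-sets. So $ d_{k-1} = (1 \pm n^{-2\delta})\hat{d}_{k-1} $, and hence
$$ |N_G^{\mathrm{e}}(S)| = \frac{1 \pm n^{-2\delta}}{1 \pm n^{-2\delta}}\, d_{k-1} = (1 \pm 3n^{-2\delta})\, d_{k-1} = (1 \pm n^{-\delta})\, d_{k-1} $$
for large $ n $, which is \ref{cond:creg1} with $ n^{-\delta} $ in place of $ \eps $.

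There is no real obstacle here. The one point requiring care is checking that the exponent $ \theta $ is positive, so that $ \hat{d}_j $ grows polynomially and beats the union bound.
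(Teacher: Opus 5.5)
Your proposal is correct and takes the same route as the paper: a Chernoff bound for each of the polynomially many sets $S$, then a union bound, with \ref{cond:creg1} obtained by comparing $\hat{d}_{k-1}$ with the average $d_{k-1}$. One small slip: the step $2\exp(-n^{\theta-4\delta}/6) \le \exp(-n^{2\delta})$ needs $\theta > 6\delta$, not just $4\delta < \theta$. This is harmless, because $\binom{r}{k} - 1 \ge k(r-k)$ gives $\theta \ge 1 - 1/k \ge 1/2$, so $\delta \ll 1/k$ suffices without any dependence on $r$.
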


% \begin{proof}
%     Observe that if $ j = k $ the statement is trivial, so consider $ j \in [k - 1] $.
%     By a standard Chernoff bound (or \cref{lem:kim_vu}), we obtain $ |E(G)| = (1 \pm n^{-2 \delta}) \binom{n}{k} $, with probability at least $ 1 - e^{-n^{2\delta}} $.
%     Let $ \mathcal{S} $ be the set of edges in $ K_n^{(k)} $ containing $ S $, then $ |\mathcal{S}| = \binom{n - j}{k - j} $.
%     Therefore, by a similar Chernoff bound, we obtain that
%     $$ |\mathcal{S} \cap E(G)| = (1 \pm n^{-2 \delta}) p \binom{n - j}{k - j} = (1 \pm n^{-2\delta}) p \binom{n}{k} \frac{\binom{k}{j}}{\binom{n}{j}} = (1 \pm n^{-\delta}) \frac{|E(G)| \binom{k}{j}}{|E_j|}, $$
%     with probability at least $ 1 - e^{-n^{2 \delta}} $.
%     We conclude by taking a union bound over the at most $ n^j $ possible $ S $.
% \end{proof}

The second property is a stronger version of \ref{cond:creg2}, and also follows from a standard application of \cref{lem:kim_vu}.
We remark that the extra $ n^{\eps} $ factor in the probability is only required for the case $ j = k $ and the additional statement for $ |S| \ge k + 1 $.

\begin{fact} \label{fact:creg2}
    Suppose $ 1/n \ll \delta \ll \eps, 1/r $, let $ p \ge n^{-\frac{r - k}{\binom{r}{k} - 1} + \eps} $, and suppose $ G \sim G^{(k)}(n, p) $.
    Then the following hold with probability at least $ 1 - e^{-n^{\delta}} $.
    \begin{enumerate}[\rm (\roman*)]
        \item $ |K_r(S)| = (1 \pm n^{-2\delta}) \hat{\gamma}_j $ for all $ j \in [0, k] $ and $ S \in E_j $; \label{cond:fact_creg2_1}
        \item $ |K_r(S)| \le n^{-\delta} \hat{\gamma}_k $ for any set $ S \subseteq V(G) $ with $ |S| \ge k + 1 $. \label{cond:fact_creg2_2}
    \end{enumerate}
    In particular, $ G $ satisfies \ref{cond:creg2}, with $ n^{-\delta} $ playing the role of $ \eps $.
\end{fact}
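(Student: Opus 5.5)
The plan is to apply \cref{lem:kim_vu} to the number of cliques containing a fixed set $S$, for each fixed $S$, and then take a union bound over the at most $n^{r}$ relevant sets. For (i), fix $j\in[0,k]$ and $S\in E_j$. When $j=k$ we also condition on $S\in E(G)$, and by independence this only removes the edge $S$ from consideration. The random variable $|K_r(S)|$ equals $w(G)$ for the $s$-weighting $w$ with $s\coloneqq\binom{r}{k}-\binom{j}{k}$. Here $w$ is the indicator of those $s$-sets of $k$-sets that form the edges of some $r$-clique on a vertex set $R\supseteq S$, excluding the $k$-subsets of $S$ themselves. Then $\mathbb{E}[w(G)]=\binom{n-j}{r-j}p^{s}=\hat\gamma_j$.

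For a nonempty $A$, the quantity $\mathbb{E}[w_A(G)]$ counts clique extensions of $S$ that contain the $k$-sets in $A$, with $A$ treated as present. Let $U$ be the set of vertices outside $S$ that are spanned by $A$, and set $u\coloneqq|U|\ge1$. Then
\[
\mathbb{E}[w_A(G)]\le n^{\,r-j-u}p^{\,s-|A|},
\]
so we need $n^{u}p^{|A|}\ge n^{2\delta}$, up to constants. The number $|A|$ is at most the number of $k$-subsets of $S\cup U$ that meet $U$, which is $\binom{j+u}{k}-\binom{j}{k}$. It therefore suffices to check that
\[
n^{u}p^{\binom{j+u}{k}-\binom{j}{k}}\ge n^{\Omega(\eps)}\qquad\text{for all } 1\le u\le r-j.
\]
Using $p\ge n^{-\rho+\eps}$ with $\rho\coloneqq\frac{r-k}{\binom rk-1}$, this reduces to the inequality
\[
\frac{\binom{j+u}{k}-\binom{j}{k}}{u}\le \frac{1}{\rho}\qquad\text{for } j\le k,\ j+u\le r.
\]
The left-hand side, viewed as an average increment, grows with both $j$ and $u$. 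So the worst case is $j=k$, $u=r-k$, where it equals $\frac{\binom rk-1}{r-k}=1/\rho$ exactly, and the slack of $n^{\eps}$ in $p$ then gives a gain of $n^{\Omega(\eps)}$. For $j<k$, or for smaller $u$, the inequality is strict or at worst equal. I expect this monotonicity/convexity check to be the main point to verify carefully. I would use that $\binom{x}{k}$ is convex in $x$, so that the secant slope $\bigl(\binom{j+u}{k}-\binom{j}{k}\bigr)/u$ is nondecreasing in both endpoints and hence maximised at $j=k$, $u=r-k$. \Cref{lem:kim_vu} then gives concentration $(1\pm n^{-2\delta})$ with failure probability $e^{-n^{3\delta}}$, provided $\delta$ is small enough relative to $\eps$.

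For (ii), with $|S|=t\ge k+1$ (and $t\le r$, since otherwise $K_r(S)=\emptyset$), I would bound $\mathbb{E}|K_r(S)|\le n^{r-t}p^{\binom rk-\binom tk}$ and compare it with $\hat\gamma_k\approx n^{r-k}p^{\binom rk-1}$. The ratio is
\[
n^{-(t-k)}p^{-\left(\binom tk-1\right)},
\]
which is at most $n^{-\Omega(\eps)}$ by the same secant-slope inequality applied with $j=k$ and $u=t-k$. Since this quantity may have expectation below $1$, I would not apply \cref{lem:kim_vu} to it directly. Instead I would apply it (or a monotone upper-tail version) to the dominating weighting with the expectation artificially raised to $n^{-\delta}\hat\gamma_k/2$. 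Alternatively, I would bound $|K_r(S)|\le |K_r(S')|$ for some $S'\subseteq S$ with $|S'|=k+1$ and apply Kim--Vu upper-tail bounds with the $\mathbb{E}'$ maximal-expectation formulation. The same density check on subsets $A$ applies.

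Finally, I would union bound over all $O(n^{r})$ choices of $S$. Statement \ref{cond:creg2} then follows because $\gamma_k$ is the average of $|K_r(e)|$ over $e\in E(G)$, which is $(1\pm n^{-2\delta})\hat\gamma_k$ by (i), so each $|K_r(e)|=(1\pm n^{-\delta})\gamma_k$.
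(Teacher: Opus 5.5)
Your proof of (i) is the standard Kim--Vu computation that the paper invokes without giving details, and your secant-slope check is correct. Strictly, $\binom{x}{k}$ is not convex as a real function near $0$ when $k\ge 3$, but its integer increments $\binom{x}{k-1}$ are nondecreasing, and that is all you use.

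For (ii) your route differs from the paper's. The paper reduces to $|S|=k+1$, handles $r=k+1$ trivially, and works with $K'_G(S)$, the number of cliques through $S$ once $\binom{S}{k}$ is added to $G$. It then splits at $p'\coloneqq n^{-\frac{r-k-1}{\binom rk-(k+1)}+\eta}$, where $\delta\ll\eta\ll\eps$. For $p\ge p'$ it applies \cref{lem:kim_vu} directly, since then $n^{\eta}\le\mathbb{E}[K'_G(S)]\le n^{-2\delta}\hat\gamma_k$. For $p\le p'$ it couples $G$ with $G'\sim G^{(k)}(n,p')$, which is valid because the event is increasing. There $n^{\eta}\le\mathbb{E}[K'_{G'}(S)]\le n^{\eps/3}$, so \cref{lem:kim_vu} gives $K'_{G'}(S)<n^{\eps/2}\le n^{-\delta}\hat\gamma_k(n,p)$ with the required probability.

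Your second alternative instead uses the full Kim--Vu theorem. That theorem bounds the deviation by $O(\sqrt{E_0E_1}\,\lambda^{s})$ outside probability $O(e^{-\lambda}n^{O(1)})$, where $E_0,E_1$ are the maxima of $\mathbb{E}[w_A(G)]$ over $|A|\ge 0$ and $|A|\ge 1$. It handles all $p$ at once with no auxiliary threshold. Note, though, that its input is not a density ratio as in \cref{lem:kim_vu}. It needs the absolute bound $\max_A\mathbb{E}[w_A(G)]\le n^{-\Omega(\eps)}\hat\gamma_k$. This is exactly your estimate $n^{r-t}p^{\binom rk-\binom tk}\le n^{-\Omega(\eps)}\hat\gamma_k$ applied with $t=|S\cup U|$, and taking $\lambda=n^{2\delta}$ finishes the argument. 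So that route is sound, at the cost of importing a stronger tool than the one stated in the paper.

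Your first alternative (``artificially raising the expectation'') is too vague as written. The clean way to make it rigorous is the paper's monotone coupling: raise $p$ until \cref{lem:kim_vu} applies.
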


The proof of \ref{cond:fact_creg2_1} is standard, so we provide only a proof of \ref{cond:fact_creg2_2}.

\begin{proof}[Proof of \ref{cond:fact_creg2_2}]
    Firstly note that it suffices to prove \ref{cond:fact_creg2_2} for sets $ S $ of size exactly $ k + 1 $.
    Note also that if $ r = k + 1 $, then $ |K_{r, G}(S)| \le 1 $ deterministically for any set $ S \in \binom{V(G)}{k + 1} $, which suffices; we assume henceforth that $ r \ge k + 2 $.
    Suppose $ \delta \ll \eta \ll \eps $ and write $ p' \coloneqq n^{-\frac{r - k-1}{\binom{r}{k} - \binom{k+1}{k}} + \eta} $.
    Given a $k$-graph $ H $ and a set $ S \in \binom{V(H)}{k + 1} $, write $ K'_H(S) $ for the number of copies of $ K_r^{(k)} $ containing $ S $ in the $k$-graph with edge set $ E(H) \cup \binom{S}{k} $; whenever $ |K_{r, H}(S)| > 0 $, we must have $ \binom{S}{k} \subseteq E(H) $ and thus $ K'_H(S) = |K_{r, H}(S)| $.
    If $ p \ge p' $, then \ref{cond:fact_creg2_2} follows by a standard application of \cref{lem:kim_vu}, noting that $ n^{\eta} \le \mathbb{E}[K'_G(S)] \le n^{-2 \delta} \hat{\gamma}_k(n, p) $ in this case.
    If instead $ p \le p' $, note that $ \hat{\gamma}_k(n, p) \ge n^{\eps} $, which means that $ n^{-\delta} \hat{\gamma}_k(n, p) \ge n^{\eps - \delta} \ge n^{\eps / 2} $.
    Let $ G' \sim G^{(k)}(n, p') $ and observe, by a standard coupling argument, that $ \mathbb{P}[G \in \mathcal{A}] \le \mathbb{P}[G' \in \mathcal{A}] $ for any increasing event $ \mathcal{A} $.
    In particular, we obtain 
    $$ \mathbb{P}[K'_G(S) \ge n^{-\delta} \hat{\gamma}_k] \le \mathbb{P}[K'_G(S) \ge n^{\eps / 2}] \le \mathbb{P}[K'_{G'}(S) \ge n^{\eps / 2}]. $$
    Noting that $ n^{\eta} \le \mathbb{E}[K'_{G'}(S)] \le n^{\eps / 3} $, it follows from a standard application of \cref{lem:kim_vu} that $ \mathbb{P}[K'_{G'}(S) \ge n^{\eps / 2}] \le e^{-n^{2 \delta}} $, which suffices.
\end{proof}

\subsection{Counting clique-paths} \label{sect:cliquepath_conc}

We now introduce some definitions and intermediate results, which will help in the proof of~\ref{cond:creg3}.
Let $ G $ be a $ k $-graph on vertex set $ [n] $, with the canonical ordering, and let $ j \in [k] $.
In this section we consider semi-ordered $ j $-clique-paths $ \mathcal{F} $, recalling that $ \mathcal{F} $ is an equivalence class of clique-paths under semi-isomorphism, consisting of all possible clique-paths with a given underlying hypergraph $ F $ and root sets $ \mathbf{S} $, but any possible orderings on the sets $ S_1, \ldots, S_{\ell} $.
In particular, recall that we identify $ \mathcal{F} $ with the pair $ (F, \overline{\mathbf{S}}) $, consisting of the hypergraph $ F $ and sets $ (\overline{S}_i)_{i \in [0, \ell]} $ for which only $ \overline{S}_0 $ is equipped with an ordering.
Given $ S \in E_j $ and $ J \subseteq [j] $, let $ \mathcal{T}_J(S) $ be the set of $ T \in E_j $ with $ \iota(S \cap T, S) = J $ and $ \hat{\mathcal{T}}_J(S) $ be those with $ J \subseteq \iota(S \cap T, S) $; we immediately make the following observation.

\begin{fact} \label{fact:sets_intersect_conc}
    Suppose $ 1/n \ll \delta \ll 1/k $, let $ p \ge n^{-\frac{r - k}{\binom{r}{k} - 1}} $, and suppose $ G \sim G^{(k)}(n, p) $.
    Then the following holds with probability at least $ 1 - e^{-n^{\delta}} $.
    For each $ j \in [k] $ and $ J \subseteq [j] $, writing $ t_J \coloneqq \binom{k - |J|}{j - |J|} \frac{\hat{d}_{|J|}}{\hat{d}_j} $, we have $ |\mathcal{T}_J(S)| = (1 \pm n^{-\delta}) t_J $ and $ |\hat{\mathcal{T}}_J(S)| = (1 \pm n^{-\delta}) t_J $ for every $ S \in E_j $.
\end{fact}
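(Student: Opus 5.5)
We prove \cref{fact:sets_intersect_conc} by reducing both counts to codegrees, which \cref{fact:creg1} already controls. We condition on the event of \cref{fact:creg1}, applied with $2\delta$ in place of $\delta$, so that it holds with probability at least $1 - e^{-n^{2\delta}} \ge 1 - e^{-n^{\delta}}$. On this event $|N_G^{\mathrm{e}}(U)| = (1 \pm n^{-4\delta}) \hat{d}_i$ for every $i \in [0, k-1]$ and every $U \in E_i$. No further randomness is needed.

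\textbf{Case $j \in [k-1]$.} Here $E_j = \binom{V(G)}{j}$ and the statement is deterministic. Fix $S \in E_j$ and $J \subseteq [j]$ with $|J| = b$, and let $U \subseteq S$ be the unique set with $\iota(U, S) = J$.
\begin{itemize}
\item The set $\hat{\mathcal{T}}_J(S)$ consists of the $j$-sets containing $U$, so $|\hat{\mathcal{T}}_J(S)| = \binom{n-b}{j-b}$.
\item The set $\mathcal{T}_J(S)$ consists of the $j$-sets $T$ with $T \cap S = U$, so $|\mathcal{T}_J(S)| = \binom{n-j}{j-b}$.
\item These two counts differ by $O(n^{j-b-1})$, a factor $1 \pm O(1/n)$.
\end{itemize}
For the target value, expand the definition of $\hat{d}$ to get
$$ t_J = \binom{k-b}{j-b}\binom{n-b}{k-b}\Big/\binom{n-j}{k-j}. $$
A direct computation gives $\binom{k-b}{j-b}\binom{n-b}{k-b} = \binom{n-b}{j-b}\binom{n-j}{k-j}$. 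Hence $t_J = \binom{n-b}{j-b}$ exactly, and both estimates follow since $1/n \ll \delta$.

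\textbf{Case $j = k$.} Now $E_k = E(G)$, so $\hat{\mathcal{T}}_J(S) = N_G^{\mathrm{e}}(U)$.
\begin{itemize}
\item For $b < k$, \cref{fact:creg1} gives $|N_G^{\mathrm{e}}(U)| = (1 \pm n^{-2\delta}) \hat{d}_b$. Since $\hat{d}_k = 1$, we have $t_J = \hat{d}_b$.
\item For $b = k$, we have $J = [k]$, so $T = S$ is the only set in either family. Both counts equal $1 = t_J$.
\end{itemize}

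For $\mathcal{T}_J(S)$ with $b < k$, remove the edges that meet $S$ in more than $U$. By inclusion–exclusion, the excluded edges are those containing $U \cup \{v\}$ for some $v \in S \setminus U$. So
$$ |\hat{\mathcal{T}}_J(S)| - |\mathcal{T}_J(S)| \le \sum_{v \in S \setminus U} |N_G^{\mathrm{e}}(U \cup \{v\})|. $$
Each term on the right is at most $2\hat{d}_{b+1}$. This uses \cref{fact:creg1} when $b+1 \le k-1$, and the trivial bound $1 = \hat{d}_k$ when $b+1 = k$.

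\textbf{Main obstacle.} The only step needing care is that this error is negligible, i.e.
$$ \hat{d}_{b+1}/\hat{d}_b = \Theta(1/n) \le n^{-2\delta}. $$
For $b+1 \le k-1$ this ratio is $\Theta(1/n)$ directly from the binomial formulas. For $b+1 = k$ the ratio is $1/(p(n-k+1))$, which requires $pn \ge n^{2\delta}$. This holds because the hypothesis gives $p \ge n^{-\frac{r-k}{\binom{r}{k}-1}}$ with $\frac{r-k}{\binom{r}{k}-1} < 1$, so $pn \ge n^{\Omega(1)}$ for $\delta$ small.

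Combining these estimates gives $|\mathcal{T}_J(S)| = (1 \pm n^{-\delta}) t_J$. Finally, take a union bound over the $O(1)$ choices of $(j, J)$; the sets $S$ were already handled uniformly on the conditioned event.
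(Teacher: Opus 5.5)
Your proof is correct and follows essentially the same route as the paper: condition on the codegree concentration of \cref{fact:creg1} (with $2\delta$), identify $\hat{\mathcal{T}}_J(S)$ with the sets in $E_j$ containing the unique $U \subseteq S$ with $\iota(U,S)=J$, and obtain $\mathcal{T}_J(S)$ by discarding those that contain a strictly larger $U' \subseteq S$, using $pn \ge n^{\Omega(1)}$ when $|U'| = k$. The only differences are cosmetic: you handle $j \le k-1$ by an exact binomial identity (giving $t_J = \binom{n-b}{j-b}$ exactly) instead of via \cref{fact:uniformity_disc}~\ref{cond:unif_disc_1}, and the subtraction step is really a union bound rather than inclusion--exclusion.
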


We remark that the statement only depends on the structure of $ G $ in the case $ j = k $.

\begin{proof}
    It clearly suffices to prove the statement for a (fixed) $ k $-graph $ G $ satisfying the conclusion of \cref{fact:creg1}, with $ 2 \delta $ playing the role of $ \delta $.
    Given $ S \in E_j $, there exists a unique set $ U \subseteq S $ with $ \iota(U, S) = J $.
    By \cref{fact:creg1}, the $k$-graph $ G $ satisfies \ref{cond:creg1} with $ n^{-2\delta} $ playing the role of $ \eps $, so by \cref{fact:uniformity_disc}~\ref{cond:unif_disc_1}, we see that $ |E_j(U)| = (1 \pm n^{-2\delta}) \binom{j}{|J|} \frac{|E_j|}{|E_{|J|}|} $.
    By definition, we have $ |\hat{\mathcal{T}}_J(S)| = |E_j(U)| $, so take $ t_J \coloneqq \binom{k - |J|}{j - |J|} \frac{\hat{d}_{|J|}}{\hat{d}_j} = (1 \pm n^{-2 \delta}) \binom{j}{|J|} \frac{|E_j|}{|E_{|J|}|} $, using \ref{cond:creg1} and recalling \eqref{eqn:gamma_d_defns}.
    Similarly, given $ U \subsetneq U' \subseteq S$, we have $ |E_j(U')| \le 2^{j + 1} \frac{|E_j|}{|E_{|J| + 1}|} \le n^{-2 \delta} t_J $.
    In particular,
    $$ |\mathcal{T}_J(S)| = |E_j(U)| - \left| \bigcup_{U \subsetneq U' \subseteq S} E_j(U') \right| = (1 \pm 3 n^{-2\delta}) t_J - 2^j n^{-2 \delta} t_J, $$
    which suffices to complete the proof.
\end{proof}

Given $ S, T \in E_j $, define a \emph{copy of $ \mathcal{F} $ in $ G $ rooted at $ S $ and $ T $} to be an \emph{injective} semi-copy~$ \Phi $ of any $ \mathbf{F} \in \mathcal{F} $ in $ G $ rooted at $ S $ and $ T $.
Note that copies of $ \mathcal{F} $ correspond to embeddings of $ F $ in $ G $ in the usual sense, in which the image of the first root set $ \overline{S}_0 $ has a prescribed ordering, and vertices are treated as unlabelled except for those in the root sets.
Write $ Z_{\mathcal{F}}(S, T) $ for the number of copies of $ \mathcal{F} $ in~$ G $ rooted at $ S $ and $ T $, and set $ Z_{\mathcal{F}}(S) \coloneqq \sum_{T \in E_j} Z_{\mathcal{F}}(S, T) $.
Note that $ Z_{\mathcal{F}}(S, T) = 0 $ whenever $ T \not \in \mathcal{T}_{\iota(\mathcal{F})}(S) $.
Given $ p \in (0, 1) $, $ G \sim G^{(k)}(n, p) $, and $ S, T \in \binom{V(G)}{j} $ with $ \iota(S \cap T, S) = \iota(\mathcal{F}) $, we define $ \mu_{\mathcal{F}}(n, p) \coloneqq \mathbb{E}[Z_{\mathcal{F}}(S, T) \mid S, T \in E_j] $; note that this is independent of the choice of $ S \in E_j $ and $ T \in \mathcal{T}_{\iota(\mathcal{F})}(S) $, and that the conditioning ensures that $ S, T \in E(G) $ in the case $ j = k $.

Given $ S \in E_j $, write
$$ Y_{\mathcal{F}}(S, T) \coloneqq \sum_{\mathbf{F} \in \mathcal{F}} |X_G(S, T, \mathbf{F})|, \quad \text{and set} \quad Y_{\mathcal{F}}(S) \coloneqq \sum_{T \in E_j} Y_{\mathcal{F}}(S, T) $$
to be the total number of semi-copies of any $ \mathbf{F} \in \mathcal{F} $ in $ G $ rooted at $ S $ and any $ T \in E_j $.
Recall that $ s_i = s_i(F) = |V(F_i \cap F_{i + 1})| $ for $ i \in [\ell] $ and write $ s_0 \coloneqq j $.
We first prove that, considering clique-paths rooted (only) at the first root set, most semi-copies (as counted by $ Y_{\mathcal{F}}(S) $) are in fact injective (as counted by $ Z_{\mathcal{F}}(S) $).

\begin{lemma} \label{fact:clique_path_noninj_trivial}
    Suppose $ 1/n \ll \delta, 1/C \ll 1 / M \ll \eps, 1/r $, let $ p \ge n^{-\frac{r - k}{\binom{r}{k} - 1} + \eps} $, and suppose $ G \sim G^{(k)}(n, p) $.
    Then the following holds with probability at least $ 1 - e^{-n^{\delta}} $.
    For all $ j \in [k] $, semi-ordered $ j $-clique-paths $ \mathcal{F} $ of length $ m \le M $, and $ S \in E_j $, we have
    $$ Y_{\mathcal{F}}(S) = (1 \pm n^{-\delta}) t_{\iota(\mathcal{F})} \mu_{\mathcal{F}}(n, p), $$
    as well as
    $$ \mu_{\mathcal{F}}(n, p) \le C \frac{\hat{d}_{s_{m}}}{\hat{d}_{|\iota(\mathcal{F})|}} \prod_{i = 0}^{m - 1} \hat{\gamma}_{s_i}. $$
\end{lemma}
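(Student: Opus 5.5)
Since $Y_{\mathcal{F}}(S)$ only roots the first set, the quantity is essentially a count of rooted extensions, and I would prove the two statements simultaneously by induction on the length $m$ of $\mathcal{F}$, peeling off the last clique. Write $\mathcal{F}=\mathcal{F}'\bullet\mathcal{F}_{s_m}$ schematically, where $\mathcal{F}'$ has length $m-1$. A semi-copy of $\mathcal{F}$ rooted at $S$ is a semi-copy of $\mathcal{F}'$ rooted at $S$ and some $U\in E_j$, followed by a clique $K\in K_r(U)$, a choice of $s_m$ vertices of $K$ in prescribed position relative to $U$, and an extension of these $s_m$ vertices to a $j$-set $T\in E_j$. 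Hence
\[ Y_{\mathcal{F}}(S)=\sum_{\Phi'}|K_r(\Phi'(S_{m-1}))|\cdot(\text{number of completions to }T), \]
and by \cref{fact:creg2}~\ref{cond:fact_creg2_1} the clique factor is $(1\pm n^{-2\delta})\hat\gamma_{j}$ (here $j=s_{m-1}$ is the size of the root set). By \cref{fact:creg1} together with \cref{fact:uniformity_disc}~\ref{cond:unif_disc_1}, the completion factor is $(1\pm n^{-2\delta})$ times a constant multiple of $\hat d_{s_m}/\hat d_j$. Iterating, $Y_{\mathcal{F}}(S)$ is concentrated around a deterministic quantity $\Lambda_{\mathcal{F}}$ of order $\frac{\hat d_{s_m}}{\hat d_{j}}\prod_{i=0}^{m-1}\hat\gamma_{s_i}$, up to constant combinatorial factors depending only on $\mathcal{F}$. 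The multiplicative errors accumulate to $(1\pm n^{-2\delta})^{2M}$, which is within $1\pm n^{-\delta}$, and a union bound over the $O(1)$ many $\mathcal{F}$ and $n^{j}$ sets $S$ costs nothing. The product $\prod_i\hat\gamma_{s_i}$ with the correct indices arises by writing each step's clique count relative to the intersection size $s_i$ rather than $j$; this uses \cref{fact:uniformity_disc}~\ref{cond:unif_disc_2} with ratios $\hat\gamma_j\hat d_{s}/\hat d_j\asymp\hat\gamma_s$.

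It then remains to identify $\Lambda_{\mathcal{F}}$ with $t_{\iota(\mathcal{F})}\mu_{\mathcal{F}}(n,p)$, which amounts to showing that non-injective semi-copies are negligible in expectation. Taking expectations (conditioning on $S\in E_j$), $t_{\iota(\mathcal{F})}\mu_{\mathcal{F}}$ is exactly the expected number of injective copies summed over $T$, using \cref{fact:sets_intersect_conc} to count the $T$ with the right intersection pattern. Every non-injective semi-copy is an injective copy of a quotient hypergraph $F/\!\sim$ obtained by identifying some vertices, and there are $O_M(1)$ such quotients. I would then bound the expected number of copies of each proper quotient by $n^{-2\delta}\Lambda_{\mathcal{F}}$. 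When an identification glues a vertex newly introduced in clique $F_i$ to some earlier vertex, the step that introduced it now has one fewer free vertex, saving a factor of $n$. The cost is that the corresponding clique extension must contain an extra prescribed vertex, so we use at most an extra $p^{-\binom{r}{k}}$ in edges that are now shared. More cleanly, the clique extension count at that step is replaced by $|K_r(S')|$ with $|S'|\ge k+1$ (or with $|S'|>s_i$ for $s_i<k$), and this is at most $n^{-\delta}\hat\gamma_k$ by \cref{fact:creg2}~\ref{cond:fact_creg2_2}, or at most $\hat\gamma_{s_i+1}\le n^{-\Omega(\eps)}\hat\gamma_{s_i}$ when the density $p\ge n^{-\frac{r-k}{\binom rk-1}+\eps}$ is used. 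So, rather than computing expectations, I would rerun the same sequential counting deterministically on the good event, which shows that the count of non-injective semi-copies is at most $n^{-\delta}$ times the main term.

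The upper bound on $\mu_{\mathcal{F}}$ then follows from $\mu_{\mathcal{F}}\le \mathbb{E}Y/t$ and the sequential estimate, with $C$ absorbing the combinatorial constants. The main obstacle is this non-injectivity step. One must handle identifications between vertices of far-apart cliques and make sure that the sequential bound always gains a polynomial factor, and in particular that a coincidence which only forces an already-present vertex into a later clique is captured by a clique count over a strictly larger root set. I would organise this by processing cliques in order and, at the first clique containing a collision, applying \cref{fact:creg2}~\ref{cond:fact_creg2_2} or the gap $\hat\gamma_{s+1}/\hat\gamma_s\le n^{-\eps/2}$ for $s<k$.
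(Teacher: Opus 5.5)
Your proposal is correct and is essentially the paper's argument. You work deterministically on the event from \cref{fact:creg1,fact:creg2,fact:sets_intersect_conc}, count clique by clique, and show non-injective semi-copies are negligible via the first-collision bound from \cref{fact:creg2}~\ref{cond:fact_creg2_2} or the gap $\hat\gamma_{s+1}\le n^{-\Omega(\eps)}\hat\gamma_s$; you then identify the main term with $t_{\iota(\mathcal{F})}\mu_{\mathcal{F}}(n,p)$ by conditional expectation, where you should state the deterministic bound $Z_{\mathcal{F}}(S)\le n^{O(M)}$ used to absorb the failure event, and note that the final $T$-extension step needs the analogous collision bound via ratios of the $\hat d_s$. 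The only difference is the order of bookkeeping: you count $Y_{\mathcal{F}}(S)$ first as an exact product of local counts and then subtract $Y_{\mathcal{F}}(S)-Z_{\mathcal{F}}(S)$, whereas the paper counts $Z_{\mathcal{F}}(S)$ directly, removing collisions step by step, and bounds $Y_{\mathcal{F}}(S)-Z_{\mathcal{F}}(S)$ separately.
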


\begin{proof}
    Suppose $ 1/C \ll 1/C' \ll 1/M $.
    A semi-copy of some $ \mathbf{F} = ((F_i)_{i \in [m]}, \mathbf{S}) \in \mathcal{F} $ in $ G $ rooted at $ S $ and some $ T \in E_j $ corresponds to a (not necessarily injective) homomorphism $ \Phi $ from $ F $ to $ G $ mapping $ S_0 $ to $ S $ in the unique order-preserving way, counted up to clique-path automorphisms of $ \mathbf{F} $.
    A copy of $ \mathcal{F} $ corresponds to an injective such homomorphism $ \Phi $.
    Let $ \mathcal{G} $ be the set of $k$-graphs on $ [n] $ satisfying the conclusions of \cref{fact:creg1,fact:creg2,fact:sets_intersect_conc} with $ 3 \delta $ playing the role of $ \delta $.

    \begin{claim} \label{claim:noninjtriv_1}
        There exist constants $ 1 \le C_{\mathcal{F}, i} \le C' $ for each $ i \in [0, m] $, depending only on $ \mathcal{F} $, such that, for any (fixed) $ k $-graph $ G \in \mathcal{G} $, we have
        $$ Z_{\mathcal{F}}(S) = (1 \pm n^{-2 \delta}) E, \quad \text{where} \quad E \coloneqq C_{\mathcal{F}, m} \frac{\hat{d}_{s_{m}}}{\hat{d}_j} \prod_{i = 0}^{m - 1} C_{\mathcal{F}, i} \hat{\gamma}_{s_i} $$
        for every $ S \in E_j $.
    \end{claim}

    \begin{claimproof}
        Fix $ S \in E_j $ and, to estimate $ Z_{\mathcal{F}}(S) $, consider iteratively constructing an injective homomorphism by choosing the images of the cliques $ F_i $ one at a time.
        Given the image of all vertices in $ F_0 \cup \cdots \cup F_{i} $, there are $ (1 \pm n^{-3 \delta}) C_{\mathcal{F}, i} \hat{\gamma}_{s_i} $ choices for the images of the remaining vertices in $ F_{i + 1} $, for a suitable constant $ 1 \le C_{\mathcal{F}, i} \le C' $, for each $ i \in [0, m - 1] $, by \cref{fact:creg2}~\ref{cond:fact_creg2_1}.
        If $ s_i \le k - 1 $, then by \cref{fact:creg2}~\ref{cond:fact_creg2_1} at most $ 2jrm \cdot C_{\mathcal{F}, i} \cdot \hat{\gamma}_{s_i + 1} \le n^{-3 \delta} \hat{\gamma}_{s_i} $ such choices involve mapping any vertex of $ V(F_{i+1} \setminus F_i) $ to the image of a vertex in $ V(F_0 \cup \cdots \cup F_i) $.
        If instead $ s_i = k $, then we similarly get at most $ j rm \cdot n^{-3\delta} \hat{\gamma}_k $ by \cref{fact:creg2}~\ref{cond:fact_creg2_2}.
        This leaves only the images of $ S_{m} \setminus V(F_{m}) $ to choose, for which there are $ (1 \pm n^{-3 \delta}) C_{\mathcal{F}, m} \frac{\hat{d}_{s_{m}}}{\hat{d}_j} $ choices, for a suitable constant $ 1 \le C_{\mathcal{F}, m} \le C'$, by \cref{fact:uniformity_disc}~\ref{cond:unif_disc_1}.
        If $ s_m < j $, then by \cref{fact:uniformity_disc}~\ref{cond:unif_disc_1} at most $ j rm \cdot C_{\mathcal{F}, m} \cdot \frac{\hat{d}_{s_{m} + 1}}{\hat{d}_j} \le n^{-3 \delta} \frac{\hat{d}_{s_{m}}}{\hat{d}_j} $ such choices involve vertices from the image of $ V(F_0 \cup \cdots \cup F_{m}) $, and if $ s_m = j $ then trivially there are none.
        Hence the desired bound follows by taking the product of our estimates.
    \end{claimproof}

    Fix $ S \in \binom{V(G)}{j} $.
    Observe, since $ Z_{\mathcal{F}}(S) \le n^{Cm} $ deterministically and $ \mathbb{P}[S \in E_j] \ge p \ge n^{-1} $, using \cref{fact:creg1,fact:creg2,fact:sets_intersect_conc,claim:noninjtriv_1}, that
    \begin{align*}
        \mathbb{E}[Z_{\mathcal{F}}(S) \mid S \in E_j] &= \mathbb{E}[Z_{\mathcal{F}}(S) \mathds{1}[G \in \mathcal{G}] \mid S \in E_j] \pm n^{Cm + 1} \mathbb{P}[G \not \in \mathcal{G}]\\
            &= (1 \pm n^{-2 \delta}) E \pm n^{Cm + 1} \cdot 3 e^{-n^{\delta}}
            = (1 \pm 2 n^{-2 \delta}) E,
    \end{align*}
    where we use that clearly $ E \ge 1 $ in the final equality.
    On the other hand, note by definition and \cref{fact:sets_intersect_conc} that
    \begin{align*}
        \mathbb{E}[Z_{\mathcal{F}}(S) \mid S \in E_j] &= \mathbb{E} \left[ \sum_{T \in \mathcal{T}_{\iota(\mathcal{F})}(S)} Z_{\mathcal{F}}(S, T) \ \bigg| \ S \in E_j \right]\\
        &= \sum_{T \in \binom{V(G)}{j}} \mathbb{P}[T \in \mathcal{T}_{\iota(\mathcal{F})}(S) \mid S \in E_j] \mathbb{E}[Z_{\mathcal{F}}(S, T) \mid S, T \in E_j]\\
        &= \mathbb{E}[|\mathcal{T}_{\iota(\mathcal{F})}(S)| \mid S \in E_j] \mu_{\mathcal{F}}(n, p)
        = (1 \pm n^{-2\delta}) t_{\iota(\mathcal{F})} \mu_{\mathcal{F}}(n, p).
    \end{align*}
    In particular
    \begin{equation} \label{eqn:noninjtriv_1}
        E = (1 \pm 5n^{-2\delta}) t_{\iota(\mathcal{F})} \mu_{\mathcal{F}}(n, p).
    \end{equation}
    By \eqref{eqn:noninjtriv_1}, the definition of $ t_{\iota(\mathcal{F})} $ in \cref{fact:sets_intersect_conc}, and the upper bound on $ C_{\mathcal{F}, i} $ in \cref{claim:noninjtriv_1}, it follows that
    $$ \mu_{\mathcal{F}}(n, p) \le \frac{2E}{t_{\iota(\mathcal{F})}}
        \le \frac{2 \hat{d}_j}{\hat{d}_{|\iota(\mathcal{F})|}} \cdot C_{\mathcal{F}, m} \frac{\hat{d}_{s_{m}}}{\hat{d}_j} \prod_{i = 0}^{m - 1} C_{\mathcal{F}, i} \hat{\gamma}_{s_i}
        \le C \frac{\hat{d}_{s_{m}}}{\hat{d}_{|\iota(\mathcal{F})|}} \prod_{i = 0}^{m - 1} \hat{\gamma}_{s_i}, $$
    as required for the second statement of the lemma.

    \begin{claim} \label{claim:noninjtriv_2}
        Let $ G $ be a (fixed) $k$-graph in $ \mathcal{G} $.
        Then
        $$ Y_{\mathcal{F}}(S) - Z_{\mathcal{F}}(S) \le n^{-2\delta} E $$
        for every $ S \in E_j $.
    \end{claim}

    \begin{claimproof}
        Fix $ S \in E_j $ and note that $ Y_{\mathcal{F}}(S) - Z_{\mathcal{F}}(S) $ is exactly the number of non-injective homomorphisms $ \Phi $.
        We use the same iterative construction as for \cref{claim:noninjtriv_1}, noting that for any non-injective $ \Phi $, there exist $ i \in [0, m] $ and $ v \in V(F_{i + 1} \setminus (F_0 \cup \cdots \cup F_i)) $ with $ \Phi(v) \in \Phi(V(F_0 \cup \cdots \cup F_i)) $.
        If $ i \in [0, m - 1] $, this means the total number of choices for the images of the remaining vertices in $ F_{i + 1} $ is at most  $ C \hat{\gamma}_s \le C n^{-3\delta} \hat{\gamma}_{s_i} $ for some $ s_i < s \le r $ by \cref{fact:creg2}, writing $ \hat{\gamma}_s \coloneqq n^{-3\delta} \hat{\gamma}_k $ for any $ s \ge k + 1 $, instead of $ (1 \pm n^{-3 \delta}) C_{\mathcal{F}, i} \hat{\gamma}_{s_i} $.
        In the case $ i = m $, this means the total number of choices for the images of $ S_m \setminus V(F_m) $ is at most $ C \frac{\hat{d}_s}{\hat{d}_j} \le C n^{-3 \delta} \frac{\hat{d}_{s_{m}}}{\hat{d}_j} $ for some $ s_m < s \le j $ by \cref{fact:creg1} and the same argument as before, instead of $ (1 \pm n^{-3 \delta}) C_{\mathcal{F}, m} \frac{\hat{d}_{s_{m}}}{\hat{d}_j} $.
        Note also that, for each $ i' \ne i $, the number of choices for the images of $ V(F_{i' + 1} \setminus (F_0 \cup \cdots \cup F_{i'})) $ remains at most $ C \hat{\gamma}_{s_{i'}} $ in the case $ i' \ne m $ by \cref{fact:creg2}, and $ C \frac{\hat{d}_{s_{m}}}{\hat{d}_j} $ in the case $ i' = m$ by \cref{fact:creg1}.
        Summing over all possible choices of $ i $ and $ v $, the desired bound follows.
    \end{claimproof}

    The first statement now follows from \cref{claim:noninjtriv_1,claim:noninjtriv_2}, as well as \eqref{eqn:noninjtriv_1}, completing the proof.
\end{proof}

\Cref{fact:clique_path_noninj_trivial} lets us restrict to injective homomorphisms, for which we now prove concentration.

\subsection{Concentration of rooted copies of clique-paths} \label{sect:clique_path_root_conc}

In this section, we prove concentration of the number of (injective) copies of $ (b, c) $-centred clique-paths rooted at both ends.
We will require the following observations, which are easy to check.

\begin{fact} \label{fact:binom_est}
    Let $ x,y,r,k $ be integers with $ 0 < k < r $, $ 0 < x \le r $, and $ 0 \le y \le \min\{ k, x - 1 \} $.
    Then
    $$ \frac{\binom{x}{k} - \mathds{1}[y = k]}{x-y} \le \frac{\binom{r}{k} - 1}{r-k}. $$
\end{fact}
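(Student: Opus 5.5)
The inequality compares two ratios of the form (number of $k$-subsets minus possibly one) over (number of new vertices). My plan is to view the right-hand side as the value at $x=r$, $y=k$, and show the left-hand side is maximised there. I would split into two cases according to whether $y=k$.

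\emph{Case $y = k$.} Then $x \ge k+1$. I would show that $g(x) \coloneqq \frac{\binom{x}{k}-1}{x-k}$ is non-decreasing for $x\ge k+1$, so that $g(x)\le g(r)$ for $k+1\le x\le r$. Writing $\binom{x}{k}-1=\sum_{t=k}^{x-1}\binom{t}{k-1}$ (hockey stick, using $\binom{k}{k}=1$), the quantity $g(x)$ is the average of the $x-k$ numbers $\binom{t}{k-1}$ for $t=k,\ldots,x-1$. These numbers are increasing in $t$, so their running average is non-decreasing in $x$, giving $g(x)\le g(r)$.

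\emph{Case $y \le k-1$.} Here the numerator is $\binom{x}{k}$ and the denominator is $x-y \ge x-k+1$. If $x<k$ the left-hand side is $0$, which suffices. Otherwise it is at most $h(x) \coloneqq \binom{x}{k}/(x-k+1)$, and I would compare $h(x)$ with $g(x)$ or $g(r)$. Using the same hockey-stick decomposition, $\binom{x}{k}=1+\sum_{t=k}^{x-1}\binom{t}{k-1}$ consists of $x-k+1$ terms, namely $\binom{k-1}{k-1}=1$ together with $\binom{t}{k-1}$ for $t=k,\ldots,x-1$. So $h(x)$ is the average of the increasing sequence $\binom{t}{k-1}$, $t=k-1,\ldots,x-1$. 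This is at most the average over $t=k,\ldots,x-1$ (dropping the smallest term), which equals $g(x)\le g(r)$ when $x\ge k+1$. When $x=k$ we get $h=1$, while $g(r)\ge \binom{k}{k-1}\ge 1$.

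There is no real obstacle here. The only care needed is in the boundary cases $x\le k$, where the numerator is $0$ or $1$, and in noting that $r\ge k+1$ ensures $g(r)$ is defined. The key point is the averaging interpretation via the hockey-stick identity, together with monotonicity of running averages of an increasing sequence.
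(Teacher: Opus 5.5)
Your proof is correct and complete. The paper gives no proof of this fact; it only says the inequality is easy to check. So there is no argument to compare yours against, and yours supplies the verification.

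The hockey-stick identity writes $\binom{x}{k}$ as $\sum_{t=k-1}^{x-1}\binom{t}{k-1}$, a sum of non-decreasing terms, and this turns both sides into averages. In the case $y=k$, the claim is then just monotonicity of running averages. In the case $y\le k-1$, you reduce to the first case by using $x-y\ge x-k+1$ and discarding the smallest term $\binom{k-1}{k-1}=1$. The boundary cases $x<k$ and $x=k$ are also handled correctly.

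One small wording point: for $k=1$ the terms $\binom{t}{0}$ are constant rather than increasing. Your argument only uses that they are non-decreasing, so nothing changes; in that case equality holds throughout.
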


\begin{fact} \label{fact:fraction_sum}
    Let $ n \in \mathbb{N} $ and $ c, x_1, \ldots, x_n, y_1, \ldots, y_n > 0 $, and suppose that $ \frac{x_i}{y_i} \le c $ for every $ i \in [n] $.
    Then $ \frac{\sum_{i = 1}^n x_i}{\sum_{i = 1}^n y_i} \le c $.
\end{fact}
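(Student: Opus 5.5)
The plan is the standard mediant argument: turn each ratio bound into a linear inequality, add the inequalities, and divide back. No earlier result from the paper is needed.

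First, since each $ y_i > 0 $, the hypothesis $ \frac{x_i}{y_i} \le c $ is equivalent to the linear inequality $ x_i \le c \, y_i $ for every $ i \in [n] $. Summing these $ n $ inequalities gives
$$ \sum_{i = 1}^n x_i \le c \sum_{i = 1}^n y_i. $$
Because $ n \ge 1 $ and all $ y_i > 0 $, we have $ \sum_{i = 1}^n y_i > 0 $. Dividing both sides by this sum preserves the inequality and yields $ \frac{\sum_{i = 1}^n x_i}{\sum_{i = 1}^n y_i} \le c $, as required.

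There is no real obstacle. The only point needing care is that the denominators are strictly positive, so that both the multiplication and the division preserve the direction of the inequality; this is guaranteed by the assumption $ y_i > 0 $. The positivity of $ c $ and of the $ x_i $ is not actually used.
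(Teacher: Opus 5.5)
Your proof is correct and matches the paper's own (omitted) argument: rewrite each hypothesis as $x_i \le c\,y_i$, sum over $i$, and divide by $\sum_i y_i > 0$. You are also right that positivity of $c$ and of the $x_i$ is never used, which matters because in the later application in \cref{lem:clique_path_density} some numerators $\binom{r_i}{k} - \mathds{1}[s'_i = k]$ can be zero.
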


% \begin{proof}
%     We have $ x_i \le c y_i $ for all $ i \in [n] $, so summing these $ n $ equations yields $ \sum_{i = 1}^{n} x_i \le c \sum_{i = 1}^{n} y_i $.
%     We now divide both sides by $ \sum_{i = 1}^n y_i $ to obtain the result.
% \end{proof}

We also need a definition of the maximum average density of a rooted hypergraph.
Let $ F $ be a hypergraph rooted at some set $ R \subsetneq V(F) $.
Given $ R \subsetneq B \subseteq V(F) $, define $ d(F, B) \coloneqq \frac{e(F[B]) - e(F[R])}{|B| - |R|} $ and write
$$ d^*(F) \coloneqq \max_{R \subsetneq B \subseteq V(F)} d(F, B). $$
Our concentration result revolves around the following deterministic density estimate.
We remark that the argument here is partially inspired by arguments used in the analysis of the hypergraph removal process by Joos~and~Kühn~\cite{jooshypergraphremoval}, originally based on ideas of Bohman, Frieze, and Lubetzky~\cite{bohmantriangleremoval}.

\begin{lemma} \label{lem:clique_path_density}
    Suppose $ 1/c \ll \eta \ll 1/r $ and let $ 0 \le b < j \le k $.
    Let $ \mathcal{F} = (F, \overline{\mathbf{S}}) $ be a $ (b, c)$-centred semi-ordered $ j $-clique-path of length $ m \in \mathbb{N} $ and regard $ F $ as rooted at $ R \coloneqq \overline{S}_0 \cup \overline{S}_{m} $.
    Then
    $$ d^*(F) \le \frac{\binom{r}{k} - 1}{r - k} + \eta. $$
\end{lemma}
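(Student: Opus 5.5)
Fix $R \subsetneq B \subseteq V(F)$; we must bound $e(F[B]) - e(F[R])$ by $\bigl(\frac{\binom{r}{k}-1}{r-k}+\eta\bigr)(|B|-|R|)$. Write $\rho \coloneqq \frac{\binom{r}{k}-1}{r-k}$. The plan is to add the vertices of $B \setminus R$ clique by clique along the path, charging new edges to new vertices, and then to find enough slack to pay for the edges that close up against the far root set $\overline{S}_m$.

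\textbf{Step 1: the local accounting.} Order the vertices of $B\setminus R$ by the index of the first clique $F_i$ containing them. Every edge of $F$ lies inside a single clique $F_i$. An edge of $F[B]$ not inside $R$ is charged to the clique $F_i$ containing its last-added vertex. For clique $F_i$, let $x_i \coloneqq |V(F_i)\cap B|$, let $y_i$ be the number of those vertices already present (in earlier cliques or in $R$), and let $x_i - y_i$ be the number of new vertices. The edges charged to $F_i$ are at most $\binom{x_i}{k}$ minus the edges already inside the old part. The old part lies in $F_{i-1}\cap F_i \subseteq S_{i-1}$ together with possibly part of $S_m$. It has size at most $k$ in the generic case, and if it is an edge it contains at least one old edge. \cref{fact:binom_est} then bounds the charge per new vertex by $\rho$, and \cref{fact:fraction_sum} aggregates over cliques. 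So the only danger is cliques where $y_i$ vertices are old but the edge count on them is not subtracted. This happens precisely when $F_i$ also meets $\overline{S}_m \setminus \overline{S}_0$ (or $\overline{S}_0$ from a far-away position), so that $y_i$ can exceed $k$ or the old vertices are spread over two root sets.

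\textbf{Step 2: use centredness to localise the bad cliques.} Vertices of $\overline{S}_0 \cap \overline{S}_m$ (there are $b$ of them) sit in every relevant clique, so they behave like common old vertices and are absorbed into the same estimate. The key claim is that $(b,c)$-centredness forces the cliques touching $\overline{S}_0\setminus \overline{S}_m$ and those touching $\overline{S}_m\setminus\overline{S}_0$ to be separated by at least $c-O(1)$ consecutive cliques. Indeed, any clique chain of length less than $c$ joining them would produce a $(\overline{S}_0\cap\overline{S}_m)$-avoiding walk of length less than $c$, exactly as in the proof of \cref{fact:ramsey_spreading_concentrating}. Process the path from both ends toward the middle: forward from $\overline{S}_0$ and backward from $\overline{S}_m$. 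In each direction every clique then has its old vertices inside a single root set, and Step 1 applies with at most one exceptional clique. That exceptional clique is where the two processing fronts meet and $B$ is glued to both sides at once; it contributes an excess of at most $\binom{r}{k} \le C(r)$ edges.

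\textbf{Step 3: pay the additive excess.} We need $|B\setminus R| \ge C(r)/\eta$ whenever the gluing clique genuinely produces an excess. If $B$ is not connected from $\overline{S}_0$ to $\overline{S}_m$ through $(\overline{S}_0\cap\overline{S}_m)$-avoiding vertices, then the components can be processed from one side only, and there is no excess at all. If $B$ is so connected, then by centredness $B$ contains a walk of length at least $c$ avoiding $\overline{S}_0\cap\overline{S}_m$. Since each clique has only $r$ vertices, such a walk visits at least about $c/r$ distinct vertices outside $R$ (it cannot stay within boundedly many cliques without shortening). Hence $|B\setminus R| \ge c/(2r) \ge C(r)/\eta$, using $1/c \ll \eta$.

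I expect Step 2 to be the main obstacle: setting up the two-sided processing order precisely, and verifying the hypotheses of \cref{fact:binom_est} (in particular the condition $y \le \min\{k,x-1\}$ and the edge case $y=k$) for every clique except one, given that cliques may repeat vertices across non-adjacent positions only through intervals of root sets.
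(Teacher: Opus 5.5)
Your plan has the same skeleton as the paper's proof. It splits on whether $F[B]$ contains an $(\overline{S}_0\cap\overline{S}_m)$-avoiding walk from $\overline{S}_0\setminus\overline{S}_m$ to $\overline{S}_m\setminus\overline{S}_0$. In the disconnected case the paper does what your Step 3 sketches. It writes $B=B_1\cup B_2$ with $B_1\cap B_2=\overline{S}_0\cap\overline{S}_m$ and every edge of $F[B]$ inside $B_1$ or $B_2$. It then counts $B_1$ forward and $B_2$ backward, and gets $d(F,B)\le\frac{\binom{r}{k}-1}{r-k}$ with no error term from \cref{fact:binom_est} and \cref{fact:fraction_sum}.

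The routes differ in the connected case, where the paper needs no localisation and no meeting clique. It counts \emph{all} cliques forward from $\overline{S}_0$, with $r_i=|V(F_i)\cap B|$ and $s'_i=|V(F_i\cap F_{i-1})\cap B|$, and simply lets the at most $k$ vertices of $\overline{S}_m\setminus\overline{S}_0$ be counted as new. The old vertices of each $F_i$ then lie in $V(F_{i-1}\cap F_i)$, so \cref{fact:binom_est} applies verbatim to every clique. The only loss is that $\sum_i(r_i-s'_i)$ exceeds $|B|-|R|$ by at most $k$. A shortest avoiding walk has distinct vertices, so $|B\setminus R|\ge c-2k$, and this loss costs only a factor $1+2k/c$. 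So the step you flag as the main obstacle can be bypassed entirely. Your additive absorption of a gluing excess of at most $\binom{r}{k}$ also works, but it needs more care than you indicate.

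Two points in your Step 2 are inaccurate.

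\textbf{The separation claim is false as stated.} When $b\ge 1$, some junction $V(F_i\cap F_{i+1})$ may equal $\overline{S}_0\cap\overline{S}_m$. Then there is no avoiding walk at all, so $\mathcal{F}$ is vacuously $(b,c)$-centred for every $c$, while the two sides sit in adjacent cliques. Your chain argument only yields an avoiding walk when every junction along the chain has a vertex outside $\overline{S}_0\cap\overline{S}_m$. This is harmless, because you use the claim only in the connected case. There you actually need only that no clique meets both $\overline{S}_0\setminus\overline{S}_m$ and $\overline{S}_m\setminus\overline{S}_0$. That already follows from $c\ge 2$, since any two vertices of a common $r$-clique lie in a common edge when $k\ge 2$.

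\textbf{``At most one exceptional clique'' is too optimistic.} It overlooks vertices of $B\setminus R$ lying in cliques on both sides of the meeting clique $F_{i^*}$. These lie in $V(F_{i^*}\cap F_{i^*+1})$, so there are at most $k$ of them. However, each one is either counted as new in both passes, or makes the old set of some backward clique fail to lie in a single root set. This is another bounded error, and it is absorbed in the same way as your gluing excess.
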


\begin{proof}
    Fix $ R \subsetneq B \subseteq V(F) $ and consider two cases.
    Assume first that there is no $ (\overline{S}_0 \cap \overline{S}_{m}) $-avoiding walk between $ \overline{S}_0 \setminus \overline{S}_{m} $ and $ \overline{S}_{m} \setminus \overline{S}_0 $ in $ F[B] $, which in particular means we may write $ B = B_1 \cup B_2 $ for sets $ B_1, B_2 $ with $ \overline{S}_0 \setminus \overline{S}_{m} \subseteq B_1 $, $ \overline{S}_{m} \setminus \overline{S}_0 \subseteq B_2 $, and $ B_1 \cap B_2 = \overline{S}_0 \cap \overline{S}_{m} $, such that every $ e \in E(F[B]) $ has either $ e \subseteq B_1 $ or $ e \subseteq B_2 $.
    Our aim is to count the vertices and edges of $ V(F_i) \cap B_1 $ one at a time for each $ i = 1, \ldots, m $, and those of $ V(F_i) \cap B_2 $ in reverse order, for $ i = m, \ldots, 1 $.
    Since the intersection $ B_1 \cap B_2 = \overline{S}_0 \cap \overline{S}_{m} $ does not contribute any vertices or edges to the density, we avoid double counting.
    
    For each $ i \in [m] $ and $ q \in [2] $, we have $ r_i^{(q)} \coloneqq |V(F_i) \cap B_q| \le r $, as well as $ s_i^{(1)} \coloneqq |V(F_i) \cap V(F_{i - 1}) \cap B_1| \le \min \{ k, r_i^{(1)} \} $ and $ s_i^{(2)} \coloneqq |V(F_i) \cap V(F_{i + 1})\cap B_2| \le \min\{ k, r_i^{(2)} \} $.
    Using the facts $ V(F_i \cap (F_0 \cup \cdots \cup F_{i - 1})) \subseteq V(F_i \cap F_{i - 1}) $ and $ V(F_i \cap (F_{i + 1} \cup \cdots \cup F_{m+1})) \subseteq V(F_i \cap F_{i + 1}) $, we may then estimate
    $$ |B| - |R| = |B_1 \setminus \overline{S}_0| + |B_2 \setminus \overline{S}_{m}| = \sum_{i = 1}^{m} r_i^{(1)} - s_i^{(1)} + \sum_{i = 1}^{m} r_{m + 1 - i}^{(2)} - s_{m + 1 - i}^{(2)}. $$
    Similarly, since every edge of $ F $ is contained in $ B_1 $ or $ B_2 $, we have
    \begin{align*}
        e(F[B]) - e(F[R]) &= e(F[B_1]) - e(F[\overline{S}_0]) + e(F[B_2]) - e(F[\overline{S}_{m}])\\
            &= \sum_{i = 1}^m \binom{r_i^{(1)}}{k} - \mathds{1}[s_i^{(1)} = k] + \binom{r_i^{(2)}}{k} - \mathds{1}[s_i^{(2)} = k].
    \end{align*}
    In this case, using \cref{fact:fraction_sum}, it is therefore sufficient to prove that
    $$ \frac{\binom{r_i^{(q)}}{k} - \mathds{1}[s_i^{(q)} = k]}{r_i^{(q)} - s_i^{(q)}} \le \frac{\binom{r}{k} - 1}{r - k} $$
    for every $ i \in [m] $ with $ r_i^{(q)} \ne s_i^{(q)} $.
    Indeed, this follows directly from \cref{fact:binom_est}, using the facts that $ 0 \le r_i^{(q)} \le r $ and $ 0 \le s_i^{(q)} \le \min \{ k, r_i^{(q)} \} $.

    For the second case, assume that there is an $ (\overline{S}_0 \cap \overline{S}_{m}) $-avoiding walk between $ \overline{S}_0 \setminus \overline{S}_{m} $ and $ \overline{S}_{m} \setminus \overline{S}_0 $ in $ F[B] $.
    By taking a minimal such walk, we may assume without loss of generality that all vertices are distinct.
    Since $ F $ is $ (b, c) $-centred, this walk must have length at least $ c $, so we may assume that $ |B \setminus R| \ge c - 2k \ge \frac{c}{2} $.
    We use a similar argument to the first case, but define instead $ r_i \coloneqq |V(F_i) \cap B| \le r $ and $ s'_i \coloneqq |V(F_i \cap F_{i - 1}) \cap B| \le \min \{ k, r_i \} $ for $ i \in [m + 1] $.
    We then similarly obtain
    $$ \frac{c}{2} \le |B| - |R| = \left( \sum_{i = 1}^{m} r_i - s'_i \right) - (s'_{m + 1} - b) $$
    and
    $$ e(F[B]) - e(F[R]) = \left( \sum_{i = 1}^{m} \binom{r_i}{k} - \mathds{1}[s'_i = k] \right) - \mathds{1}[s'_{m + 1} = k]. $$
    Observe that
    $$ d(F, B) \le \frac{\sum_{i = 1}^m \binom{r_i}{k} - \mathds{1}[s'_i = k]}{\sum_{i = 1}^{m} r_i - s'_i} \beta, \quad \text{for} \quad \beta \coloneqq \frac{\sum_{i = 1}^{m} r_i - s'_i}{\left( \sum_{i = 1}^{m} r_i - s'_i \right) - (s'_{m + 1} - b)} \le 1 + \frac{2k}{c} \le 1 + \eta^2. $$
    By \cref{fact:binom_est}, we obtain
    $$ \frac{\binom{r_i}{k} - \mathds{1}[s'_i = k]}{r_i - s'_i} \le \frac{\binom{r}{k} - 1}{r - k} $$
    for every $ i \in [m] $ with $ r_i \ne s'_i $, so it follows, using again \cref{fact:fraction_sum}, that
    $$ d(F, B) \le (1 + \eta^2) \frac{\binom{r}{k} - 1}{r - k} \le \frac{\binom{r}{k} - 1}{r - k} + \eta, $$
    as required.
\end{proof}

The proof of concentration of $ Z_{\mathcal{F}}(S, T) $ is now a standard corollary of \cref{lem:kim_vu}.

\begin{lemma} \label{lem:counting_extensions}
    Suppose $ 1/n \ll \delta, 1/c, 1/M \ll \eps, 1/r $, let $ 0 \le b < j \le k $ and $ p \ge n^{-\frac{r - k}{\binom{r}{k} - 1} + \eps} $, and suppose $ G \sim G^{(k)}(n, p) $.
    Then the following holds with probability at least $ 1 - e^{-n^{\delta}} $.
    Let $ \mathcal{F} = (F, \overline{\mathbf{S}}) $ be a $ (b, c)$-centred semi-ordered $ j $-clique-path of length $ m \le M $.
    For all $ S \in E_j $ and $ T \in \mathcal{T}_{\iota(\mathcal{F})}(S) $, we have
    $$ Z_{\mathcal{F}}(S, T) = (1 \pm n^{-\delta}) \mu_{\mathcal{F}}(n, p). $$
\end{lemma}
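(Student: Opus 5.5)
We plan to apply \cref{lem:kim_vu} directly to the count $ Z_{\mathcal{F}}(S, T) $, viewed as a polynomial in the edge indicators of $ G $. Fix $ j $, a $ (b, c) $-centred semi-ordered $ \mathcal{F} = (F, \overline{\mathbf{S}}) $ of length $ m \le M $, and sets $ S, T \in \binom{[n]}{j} $ with $ \iota(S \cap T, S) = \iota(\mathcal{F}) $. Condition on $ S, T \in E_j $, so that in the case $ j = k $ the two root edges are present. The complete $ k $-graph on the root set $ R \coloneqq \overline{S}_0 \cup \overline{S}_m $ may contain further potential edges of $ F[R] $. Since $ F $ is centred with $ c \ge 2 $, however, no edge of $ F $ meets both $ \overline{S}_0 \setminus \overline{S}_m $ and $ \overline{S}_m \setminus \overline{S}_0 $. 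Hence $ e(F[R]) $ consists only of the root edges themselves (when $ j = k $), and the remaining edges of $ F $ are random.

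Each injective embedding $ \Phi $ of $ F $ rooted at $ (S, T) $, taken up to the relevant automorphisms, corresponds to the set of $ s \coloneqq e(F) - e(F[R]) $ non-root $ k $-sets it uses. Let $ w $ be the $ s $-weighting given by the multiplicity of each such $ s $-set among the embeddings, with multiplicity bounded by a constant depending on $ M, r $. Then $ w(G) = Z_{\mathcal{F}}(S, T) $ and $ \mathbb{E}[w(G)] = \mu_{\mathcal{F}}(n, p) = \Theta(n^{v} p^{s}) $, where $ v \coloneqq |V(F)| - |R| $.

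It then remains to verify the condition $ \mathbb{E}[w_A(G)] \le n^{-\delta'} \mathbb{E}[w(G)] $ for every nonempty $ A $. Each $ \emptyset \ne A $ contributing nonzero weight is the image of a nonempty set of non-root edges of $ F $. Let $ B \supsetneq R $ be $ R $ together with the vertices spanned by those edges. Then $ \mathbb{E}[w_A(G)] = O(n^{v - (|B| - |R|)} p^{s - e'}) $, where $ e' \ge e(F[B]) - e(F[R]) $, because fixing $ A $ also fixes the edges induced on $ B $ up to constant factors. We may take $ B $ to be closed in the sense that the fixed edges are exactly those induced; otherwise the bound only improves. Consequently
$$ \frac{\mathbb{E}[w_A(G)]}{\mathbb{E}[w(G)]} = O\big( (n p^{d(F,B)})^{-(|B| - |R|)} \big). $$
By \cref{lem:clique_path_density} with $ \eta \ll \eps $, we have $ d(F, B) \le \frac{\binom{r}{k} - 1}{r - k} + \eta $. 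Together with $ p \ge n^{-\frac{r-k}{\binom{r}{k}-1} + \eps} $, this gives $ n p^{d(F,B)} \ge n^{\eps'} $ for some $ \eps' > 0 $ depending on $ \eps, r $. This verifies the hypothesis of \cref{lem:kim_vu}, and that lemma yields $ Z_{\mathcal{F}}(S,T) = (1 \pm n^{-\delta})\mu_{\mathcal{F}} $ with failure probability $ e^{-n^{2\delta}} $.

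Finally, we take a union bound over the $ O(n^{2j}) $ pairs $ (S, T) $ and the constantly many $ \mathcal{F} $ of length at most $ M $. The conditioning on $ S, T \in E_j $ costs at most a factor $ p^{-2} \le n^2 $ in probability, which is harmless.

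The main obstacle is the density verification, namely translating an arbitrary nonempty $ A $ into a vertex set $ B $ with $ R \subsetneq B $ and correctly accounting for edges inside $ R $. Vertices of $ B $ might lie only in $ R $ (as when $ A $ consists of edges within $ R $); this is excluded by centredness for non-root edges, since an edge inside $ R $ that is not a root edge would join the two sides. The quantitative comparison itself is then exactly \cref{lem:clique_path_density}.
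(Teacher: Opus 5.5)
Your proposal is correct and follows essentially the same route as the paper: Kim--Vu applied to the multiplicity weighting on non-root edge sets of rooted copies, with the hypothesis verified by comparing $n^{v(F)-|B|}p^{e(F)-e(F[B])}$ against $\mu_{\mathcal{F}}(n,p)$ via \cref{lem:clique_path_density}, followed by a union bound. Your explicit observation that centredness forces $F[R]$ to contain only the root edges is something the paper uses implicitly when asserting $Z_{\mathcal{F}}(S,T)=w(G)$.

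One small slip: your ``$e' \ge e(F[B]) - e(F[R])$'' is the wrong way round. Edges of $F[B]$ outside $A$ are not fixed and still cost a factor $p$ each, so the correct justification is $|A| \le e(F[B]) - e(F[R])$ together with $p \le 1$. The bound you actually use is nevertheless right.
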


\begin{proof}
    Introduce new constants $ C, \eta $ satisfying $ 1/n \ll 1/C \ll \delta, 1/c, 1/M \ll \eta \ll \eps, 1/r $.
    Fix $ S, T \in \binom{V(G)}{j} $ with $ \iota(S \cap T, S) = \iota(\mathcal{F}) $ and aim to apply \cref{lem:kim_vu} to the $ s $-weighting given by the multiplicity $ m_{\mathcal{S}} $, where $ \mathcal{S} $ is the collection of all edge sets $ E(\Phi(F)) \setminus E(\Phi(F)[S \cup T]) $ of any copy $ \Phi $ of $ \mathcal{F} $ in $ K_n^{(k)} $ rooted at $ S $ and $ T $; any edge sets associated to multiple copies are counted with multiplicity.
    Note that, in the case $ S, T \in E_j $ (which is only a non-trivial condition if $ j = k$), clearly
    $ Z_{\mathcal{F}}(S, T) = w(G) $
    so, by the definition of $ \mu_{\mathcal{F}} $, we have
    $$ \mathbb{E}[w(G)] = \mathbb{E}[w(G) \mid S, T \in E_j] = \mathbb{E}[Z_{\mathcal{F}}(S, T) \mid S, T \in E_j] = \mu_{\mathcal{F}}(n, p). $$
    We now fix $ \emptyset \ne A \subseteq E(K_n^{(k)}) $ and aim to prove that
    \begin{equation} \label{eqn:extensions_2}
        \mathbb{E}[w_A(G)] \le n^{-\eta} \mu_{\mathcal{F}}(n, p).
    \end{equation}

    Fix an injective function $ \Psi: \bigcup A \to V(F) $, let $ A' \coloneqq \text{Image}(\Psi) $, write $ R \coloneqq \overline{S}_0 \cup \overline{S}_{m} $, and let $ B \coloneqq R \cup A' $.
    To bound $ \mathbb{E}[w_A(G)] $, we seek to bound the expected number of copies of $ \mathcal{F} $ in $ G $ with $ B $ mapped to $ S \cup T \cup \bigcup A $; this will suffice, since there are at most $ C $ choices for $ \Psi $.
    In particular, we have
    $$ \mu_{\mathcal{F}}(n, p) \ge \frac{1}{C} p^{e(F) - e(F[R])} n^{v(F) - |R|} \quad \text{and} \quad \mathbb{E}[w_A(G)] \le C p^{e(F) - e(F[B])} n^{v(F) - |B|}, $$
    since $ |A| \le e(F[B]) - e(F[R]) $.
    As such, it is enough to prove that
    \begin{equation} \label{eqn:extensions_0}
        p^{e(F) - e(F[B])} n^{v(F) - |B|} \le n^{-2 \eta} p^{e(F) - e(F[R])} n^{v(F) - |R|}.
    \end{equation}
    
    Write $ v \coloneqq |B| - |R| $ and $ d \coloneqq d(F, B) $, and note that $ dv = e(F[B]) - e(F[R]) \ge 1 $ since $ A \ne \emptyset $.
    Write also $ \alpha \coloneqq \frac{\binom{r}{k} - 1}{r - k} \ge 1 $ and note that $ p \ge n^{-\frac{1}{\alpha} + \eps} $.
    By \cref{lem:clique_path_density}, we see that
    $$ \frac{1}{d} \ge \frac{1}{\alpha + \eta} \ge \frac{1}{\alpha} - 2 \eta, \quad \text{so} \quad \frac{1}{d} - \frac{1}{\alpha} + \eps \ge 2 \eta. $$
    It follows that
    $$ p^{e(F[B]) - e(F[R])} n^{|B| - |R|} = p^{dv} n^v \ge n^{dv (\frac{1}{d} -\frac{1}{\alpha} + \eps)} \ge n^{2 \eta dv} \ge n^{2 \eta}, $$
    which rearranges to give \eqref{eqn:extensions_0}.
    This is sufficient for \eqref{eqn:extensions_2}, so the result follows immediately by \cref{lem:kim_vu} and a union bound over all $ \mathcal{F}, S, T $.
\end{proof}

We now have all of the key results we need to deduce our pseudorandomness properties.

\subsection{Proof of clique regularity} \label{sect:clique_reg_hyper}

In this section, we complete the proof of \cref{lem:gnp_reg}.
We have already shown \ref{cond:creg1} and \ref{cond:creg2} in \cref{fact:creg1,fact:creg2}, respectively, so we aim to prove \ref{cond:creg3} using \cref{lem:counting_extensions}.
We remark that \ref{cond:creg3} makes full use of the extra $ n^{\eps} $ factor in $ p $.
While it is conceivable that some variant of \ref{cond:creg3}, in which the lengths of the relevant clique-paths are allowed to grow with $ n $, could be true at (or some constant factor above) the conjectured threshold, our proof of \cref{claim:creg3} relies heavily on $ \ell $ being constant, as well as the strong concentration of the number of cliques containing each edge given by \ref{cond:creg2}.

\begin{lemma} \label{claim:creg3}
    Suppose $ 1/n \ll \delta, 1/C \ll 1/\ell, 1/c \ll 1/r, \eps $.
    Let $ p \ge n^{-\frac{r - k}{\binom{r}{k} - 1} + \eps} $ and suppose $ G \sim G^{(k)}(n, p) $.
    Then the following holds with probability at least $ 1 - e^{-n^{\delta}} $.
    For any $ 0 \le b < j \le k $ and $ (b, c) $-centred semi-ordered $ j $-clique-path $ \mathcal{F} $ of length $ m \le \ell^{3k} $, there exist $ \alpha_{\mathcal{F}} \in \mathbb{R} $ with $ |\alpha_{\mathcal{F}}| \le C $ and $ \zeta_{\mathcal{F}}: E_j^2 \to \mathbb{R} $ with $ \norm{\zeta_{\mathcal{F}}} \le n^{-\delta} $ such that
    $$ \Xi[\mathcal{F}](S, T) = \alpha_{\mathcal{F}} \frac{\gamma_j}{\gamma_b} + \zeta_{\mathcal{F}}(S, T) $$
    for all $ S, T \in E_j $ with $ \iota(\mathcal{F}) \subseteq \iota(S \cap T, S) $.
    In particular, $ G $ satisfies \ref{cond:creg3}, with $ n^{-\delta} $ playing the role of $ \eps $.
\end{lemma}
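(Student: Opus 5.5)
We will condition on the intersection of the good events from \cref{fact:creg1,fact:creg2,fact:sets_intersect_conc,fact:clique_path_noninj_trivial,lem:counting_extensions}. We apply the last two with $M \coloneqq \ell^{3k}$, and with $2\delta$ in place of $\delta$ so that all error terms are $n^{-2\delta}$. Each of these events fails with probability at most $e^{-n^{2\delta}}$, so a union bound gives the stated probability. On this event, fix $0\le b<j\le k$, a $(b,c)$-centred semi-ordered $\mathcal{F}=(F,\overline{\mathbf{S}})$ of length $m\le \ell^{3k}$, and $S,T\in E_j$ with $\iota(\mathcal{F})\subseteq\iota(S\cap T,S)$. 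Since $\mathcal{F}$ is centred we have $|\iota(\mathcal{F})|=b$.

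\textbf{Step 1: replacing weights by a constant.} Every semi-copy $\Phi$ of a clique-path in $\mathcal{F}$ has weight
\[
w(\Phi)=\prod_{i\in[m]}\hat w_{s_i}\big/|K_r(\Phi(V(F_i\cap F_{i+1})))|.
\]
By \cref{fact:creg2}\ref{cond:fact_creg2_1}, each denominator equals $(1\pm n^{-2\delta})\hat\gamma_{s_i}$, provided the image set has exactly $s_i$ vertices. This holds because $\Phi$ is injective on root sets and $V(F_i\cap F_{i+1})\subseteq S_i$. Since $m$ is bounded, it follows that $w(\Phi)=(1\pm 2mn^{-2\delta})W_{\mathcal{F}}$, where $W_{\mathcal{F}}\coloneqq\prod_i \hat w_{s_i}/\hat\gamma_{s_i}$ depends only on $\mathcal{F}$. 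Hence
\[
\Xi[\mathcal{F}](S,T)=(1\pm n^{-3\delta/2})\,W_{\mathcal{F}}\,Y_{\mathcal{F}}(S,T).
\]

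\textbf{Step 2: splitting by the exact intersection.} First suppose $\iota(S\cap T,S)=\iota(\mathcal{F})$. Then $Z_{\mathcal{F}}(S,T)=(1\pm n^{-2\delta})\mu_{\mathcal{F}}$ by \cref{lem:counting_extensions}, and we must also control the non-injective part $Y_{\mathcal{F}}(S,T)-Z_{\mathcal{F}}(S,T)$. This is the step I expect to be the main obstacle: \cref{fact:clique_path_noninj_trivial} only controls the sum over $T$, not each individual pair. My plan is a pointwise argument of the same Kim--Vu type. Each non-injective semi-copy is a copy of a quotient $\mathcal{F}'$ of $\mathcal{F}$, obtained by identifying some vertices. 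Any such quotient has strictly fewer free vertices, and its expected rooted count is at most $n^{-\eta}\mu_{\mathcal{F}}$, since a quotient behaves like an extension as in the proof of \cref{lem:counting_extensions}. To make this precise I would rerun the density argument of \cref{lem:clique_path_density} on the quotient, taking $B$ to be the identified vertices together with $R$. Then \cref{lem:kim_vu} gives an upper bound $n^{-\eta/2}\mu_{\mathcal{F}}$ with high probability; the upper-tail case where the expectation is tiny is handled via monotone coupling, as in \cref{fact:creg2}\ref{cond:fact_creg2_2}. If this fails, the fallback is to keep $\zeta_{\mathcal{F}}$ small only in the norm $\norm{\cdot}$, which is a row-sum norm. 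That weaker statement is all \ref{cond:creg3} requires, and the row sums are exactly what \cref{fact:clique_path_noninj_trivial} controls. Next suppose $\iota(S\cap T,S)\supsetneq\iota(\mathcal{F})$. Then injective copies do not exist, because the root sets would overlap too much, so only non-injective ones contribute. The number of such $T$ per $S$ is at most $n^{-2\delta}t_{\iota(\mathcal{F})}$ by \cref{fact:sets_intersect_conc}, and their total contribution per $S$ is bounded by the row-sum estimate. Both of these pieces go into $\zeta_{\mathcal{F}}$.

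\textbf{Step 3: identifying $\alpha_{\mathcal{F}}$.} Set
\[
\alpha_{\mathcal{F}}\coloneqq W_{\mathcal{F}}\,\mu_{\mathcal{F}}\,\gamma_b/\gamma_j .
\]
By the bound on $\mu_{\mathcal{F}}$ in \cref{fact:clique_path_noninj_trivial}, we have $|W_{\mathcal{F}}\mu_{\mathcal{F}}|\le C'\hat d_{s_m}/\hat d_b\cdot\hat\gamma_j/\hat\gamma_{s_m}$. Using $\gamma_j=(1\pm n^{-2\delta})\hat\gamma_j$ together with the relation $\hat\gamma_s\hat d_j/\hat d_s\asymp \hat\gamma_j$ for $s\le j$, this gives $|\alpha_{\mathcal{F}}|\le C$. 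Finally, the entrywise error is $n^{-3\delta/2}|\alpha_{\mathcal{F}}|\gamma_j/\gamma_b$. Summed over the $(1\pm o(1))t_b\asymp \hat d_b/\hat d_j$ relevant $T$, this gives row sums of order $n^{-3\delta/2}C$, since $\gamma_j/\gamma_b\asymp \hat d_j/\hat d_b$. Adding the Step 2 contributions yields $\norm{\zeta_{\mathcal{F}}}\le n^{-\delta}$. A union bound over the boundedly many choices of $\mathcal{F}$ completes the argument.
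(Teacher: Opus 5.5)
Your proposal is correct once you commit to what you call the fallback, and that is exactly the paper's proof. Since $\norm{\cdot}$ on functions $E_j^2\to\mathbb{R}$ is by definition the row-sum norm, the paper defines $\alpha_{\mathcal{F}}$ essentially as you do and bounds $\sum_T|\zeta_{\mathcal{F}}(S,T)|$ by splitting it into four pieces: the non-injective total $Y_{\mathcal{F}}(S)-Z_{\mathcal{F}}(S)$ (via \cref{fact:clique_path_noninj_trivial}, \cref{lem:counting_extensions} and \cref{fact:sets_intersect_conc}), the $O(n^{-2\delta}t_{\iota(\mathcal{F})})$ sets $T$ with strictly larger intersection, the deviations $|Z_{\mathcal{F}}(S,T)-\mu_{\mathcal{F}}|$ for exact-intersection $T$, and the multiplicative weight error. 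The pointwise quotient argument is unnecessary, and as sketched it would not work, since a quotient of a $(b,c)$-centred clique-path can have short walks between the images of $S_0\setminus S_m$ and $S_m\setminus S_0$, so neither \cref{lem:clique_path_density} nor the Kim--Vu hypothesis is available for it.
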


\begin{proof}
    Suppose $ 1/C \ll 1/C' \ll 1/\ell, 1/c $.
    By \cref{fact:creg2,fact:sets_intersect_conc,fact:clique_path_noninj_trivial,lem:counting_extensions}, it suffices to prove that the statement holds for any (fixed) $ k $-graph $ G $ satisfying the conclusions of \cref{fact:creg2,fact:sets_intersect_conc,fact:clique_path_noninj_trivial,lem:counting_extensions}, with $ 3 \delta $ playing the role of $ \delta $ in each case, and $ \ell^{3k} $ playing the role of $ M $ in \cref{fact:clique_path_noninj_trivial,lem:counting_extensions}.
    We now fix such a $k$-graph $ G $.
    
    Let $ w $ be defined as in \eqref{eqn:clique_path_weight_def}.
    By \cref{fact:creg2}~\ref{cond:fact_creg2_1}, we have $ |K_r(U)| = (1 \pm n^{-3 \delta}) \hat{\gamma}_s $ for all $ s \in [k] $ and $ U \in E_s $.
    It is thus easy to see that
    $$ w(\Phi) = (1 \pm n^{-2 \delta}) \overline{w}, \quad \text{where} \quad \overline{w} \coloneqq \prod_{i = 1}^{m} \frac{\hat{w}_{s_i}}{\hat{\gamma}_{s_i}}, $$
    for any $\mathbf{F} = (F, \mathbf{S}) \in \mathcal{F}$ and $ \Phi \in X_G(S, T, \mathbf{F}) $, where $ \mathbf{s} \coloneqq \mathbf{s}(F) $, noting that $ \mathbf{s} $ is invariant under semi-isomorphism.
    It follows that
    \begin{equation} \label{eqn:creg3_pf_1}
        \Xi[\mathcal{F}](S, T) = (1 \pm n^{-2 \delta}) Y_{\mathcal{F}}(S, T) \overline{w}
    \end{equation}
    for all $ S, T \in E_j $ with $ \iota(\mathcal{F}) \subseteq \iota(S \cap T, S) $.

    Define
    \begin{equation} \label{eqn:creg3_pf_2}
        \alpha_{\mathcal{F}} \coloneqq \frac{\hat{\gamma}_b}{\hat{\gamma}_j} \mu_{\mathcal{F}}(n, p) \overline{w} = (1 \pm n^{-2 \delta}) \frac{\gamma_b}{\gamma_j} \mu_{\mathcal{F}}(n, p) \overline{w},
    \end{equation}
    using \cref{fact:creg2}~\ref{cond:fact_creg2_1}.
    By the bound on $ \mu_{\mathcal{F}}(n, p) $ from \cref{fact:clique_path_noninj_trivial} with $ (\ell^{3k}, C') $ playing the role of $ (M, C) $, we see that
    \begin{equation} \label{eqn:creg3_pf_2.5}
        |\alpha_{\mathcal{F}}| \le \frac{\hat{\gamma}_b}{\hat{\gamma}_j} \cdot C' \frac{\hat{d}_{s_m}}{\hat{d}_b} \prod_{i = 0}^{m - 1} \hat{\gamma}_{s_i} \cdot \prod_{i = 1}^{m} \frac{|\hat{w}_{s_i}|}{\hat{\gamma}_{s_i}}
        \le C' \frac{\hat{\gamma}_b}{\hat{\gamma}_j} \frac{\hat{d}_{s_m}}{\hat{d}_b} \frac{\hat{\gamma}_{s_0}}{\hat{\gamma}_{s_m}} \prod_{i = 1}^{m} |\hat{w}_{s_i}|
        \le C' \frac{\hat{d}_{s_m}}{\hat{d}_b} \frac{\hat{\gamma}_b}{\hat{\gamma}_{s_m}} \le C,
    \end{equation}
    using also in the penultimate inequality the facts that $ s_0 = j $ and $ |\hat{w}_s| \le 1 $ for every $ s \in [k] $, by definition.
    Define further
    $$ \zeta_{\mathcal{F}}(S, T) \coloneqq \Xi[\mathcal{F}](S, T) - \alpha_\mathcal{F} \frac{\gamma_j}{\gamma_{b}}, $$
    for any $ S, T \in E_j $ with $ \iota(\mathcal{F}) \subseteq \iota(S \cap T, S) $, taking $ \zeta_{\mathcal{F}}(S, T) \coloneqq 0 $ otherwise.
    It remains only to prove that $ \norm{\zeta_{\mathcal{F}}} \le n^{-\delta} $.
    
    Indeed, by \eqref{eqn:creg3_pf_1} and \eqref{eqn:creg3_pf_2}, we may write
    \begin{align*}
        \frac{|\zeta_{\mathcal{F}}(S, T)|}{|\overline{w}|} &= Y_{\mathcal{F}}(S, T) - \mu_{\mathcal{F}}(n, p) \pm n^{-2 \delta}(Y_{\mathcal{F}}(S, T) + \mu_{\mathcal{F}}(n, p)) \nonumber \\
            &\le |Z_{\mathcal{F}}(S, T) - \mu_{\mathcal{F}}(n, p)| + |Y_{\mathcal{F}}(S, T) - Z_{\mathcal{F}}(S, T)| + n^{-2 \delta}(Y_{\mathcal{F}}(S, T) + \mu_{\mathcal{F}}(n, p)), \label{eqn:creg3_pf_3}
    \end{align*}
    for any $ S \in E_j $ and $ T \in \hat{\mathcal{T}}_{\iota(\mathcal{F})}(S) $.
    In particular, for any $ S \in E_j $, we compute
    $$ \sum_{T \in E_j} |\zeta_{\mathcal{F}}(S, T)| \le (A + B + D + E) |\overline{w}|, $$
    $$ \quad \text{where} \quad A \coloneqq \sum_{T \in \hat{\mathcal{T}}_{\iota(\mathcal{F})}(S)} (Y_{\mathcal{F}}(S, T) - Z_{\mathcal{F}}(S, T)),
    \quad
    B \coloneqq \sum_{T \in \hat{\mathcal{T}}_{\iota(\mathcal{F})}(S) \setminus \mathcal{T}_{\iota(\mathcal{F})}(S)} |Z_{\mathcal{F}}(S, T) - \mu_{\mathcal{F}}(n, p)|, $$
    $$ D \coloneqq \sum_{T \in \mathcal{T}_{\iota(\mathcal{F})}(S)} |Z_{\mathcal{F}}(S, T) - \mu_{\mathcal{F}}(n, p)|,
    \quad \text{and} \quad
    E \coloneqq n^{-2 \delta} \sum_{T \in \hat{\mathcal{T}}_{\iota(\mathcal{F})}(S)} (Y_{\mathcal{F}}(S, T) + \mu_{\mathcal{F}}(n, p)). $$
    We now seek to bound each of these quantities.
    
    Recall that $ Z_{\mathcal{F}}(S, T) = 0 $ whenever $ T \not \in \mathcal{T}_{\iota(\mathcal{F})}(S) $, and by \cref{fact:sets_intersect_conc}, we have $ |\hat{\mathcal{T}}_{\iota(\mathcal{F})}(S) \setminus \mathcal{T}_{\iota(\mathcal{F})}(S)| \le n^{-2 \delta} t_{\iota(\mathcal{F})} $.
    It follows that $ B \le 2 n^{-2 \delta} t_{\iota(\mathcal{F})} \mu_{\mathcal{F}}(n, p) $.
    By \cref{lem:counting_extensions}, we have
    \begin{equation*} \label{eqn:creg3_pf_4}
        Z_{\mathcal{F}}(S, T) = (1 \pm n^{-2 \delta}) \mu_{\mathcal{F}}(n, p)
    \end{equation*}
    for all $ S \in E_j $ and $ T \in \mathcal{T}_{\iota(\mathcal{F})}(S) $.
    It follows that
    $$ D \le |\mathcal{T}_{\iota(\mathcal{F})}(S)| \cdot n^{-2 \delta} \mu_{\mathcal{F}}(n, p) \le 2 n^{-2 \delta} t_{\iota(\mathcal{F})} \mu_{\mathcal{F}}(n, p). $$
    Using \cref{lem:counting_extensions} in the second equality, \cref{fact:sets_intersect_conc} in the third, and \cref{fact:clique_path_noninj_trivial} in the inequality, we also obtain
    \begin{align*}
        A = \sum_{T \in \hat{\mathcal{T}}_{\iota(\mathcal{F})}(S)} (Y_{\mathcal{F}}(S, T) - Z_{\mathcal{F}}(S, T)) &= Y_{\mathcal{F}}(S) - Z_{\mathcal{F}}(S) \\
            &= Y_{\mathcal{F}}(S) - |\mathcal{T}_{\iota(\mathcal{F})}(S)| \cdot (1 \pm n^{-2 \delta}) \mu_{\mathcal{F}}(n, p) \nonumber \\
            &= Y_{\mathcal{F}}(S) - (1 \pm 3 n^{-2 \delta}) t_{\iota(\mathcal{F})} \mu_{\mathcal{F}}(n, p) \nonumber \\
            &\le 4 n^{-2 \delta} t_{\iota(\mathcal{F})} \mu_{\mathcal{F}}(n, p), \label{eqn:creg3_pf_5}
    \end{align*}
    for every $ S \in E_j $.
    Finally, by \cref{fact:sets_intersect_conc,fact:clique_path_noninj_trivial}, we have 
    $$ E = n^{-2 \delta} (Y_{\mathcal{F}}(S) + |\hat{\mathcal{T}}_{\iota(\mathcal{F})}(S)| \mu_{\mathcal{F}}(n, p)) \le 4 n^{-2 \delta} t_{\iota(\mathcal{F})} \mu_{\mathcal{F}}(n, p). $$
    Using \eqref{eqn:creg3_pf_2}, \eqref{eqn:creg3_pf_2.5}, and the definition of $ t_J $ in \cref{fact:sets_intersect_conc} in the second inequality, we conclude that
    $$ \norm{\zeta_{\mathcal{F}}} \le 12 n^{-2 \delta} t_{\iota(\mathcal{F})} \mu_{\mathcal{F}}(n, p) |\overline{w}| \le 12 n^{-2 \delta} \cdot 2^k \frac{\hat{d}_b}{\hat{d}_j} \cdot C \frac{\hat{\gamma}_j}{\hat{\gamma}_b} \le n^{-\delta}, $$
    as required.
\end{proof}

This completes the proof of \cref{lem:gnp_reg}, which now follows immediately from \cref{fact:creg1,fact:creg2,claim:creg3}.

\section{Concluding remarks}

We believe that the analogue of \cref{conj:triangle_threshold} holds in general for fractional clique decompositions in random hypergraphs.
Write
$$ p^*_{k, r}(n) \coloneqq c_{k, r} \left( \frac{\log{n}}{n^{r - k}} \right)^{\frac{1}{\binom{r}{k} - 1}}
\quad \text{where} \quad
c_{k, r} \coloneqq \left(
\left(k-\frac{r-k}{\binom{r}{k}-1}\right)(r-k)!
\right)^{\frac{1}{\binom{r}{k}-1}} $$
is the unique constant such that $ p^*_{k, r} $ is the sharp threshold function for the property that every edge present in $ G^{(k)}(n, p) $ is contained in a copy of $ K_r^{(k)} $.

\begin{conjecture} \label{conj:general_threshold}
    Let $ k \ge 2 $, $ r \ge k + 1 $, and $ \eps > 0 $, and suppose $ p \ge (1 + \eps) p^*_{k, r}(n) $.
    Then w.h.p.\ $ G^{(k)}(n, p) $ admits a fractional $ K_r^{(k)} $-decomposition.
\end{conjecture}

We wonder whether a suitable variant of the process defined in this paper would also converge to a fractional clique decomposition at (or a constant factor above) the conjectured threshold.
The natural modification would be to replace $ \mathcal{P}_{k, \ell} $ with a new operator $ \mathcal{P}_{k, \ell, L} \coloneqq (\mathcal{R}_k^L \mathcal{P}_{k - 1, \ell})^{\ell} $, where $ \ell $ is the same as in \cref{thm:proc_conv_fcd} but $ L = \Theta(\frac{\log{n}}{\log{\log{n}}}) $.
This choice of $ L $ ensures that the expected number of the associated clique-paths between any pair of edges is still polynomial, so there is some hope of achieving the required concentration.

There are, however, several additional difficulties in this regime.
Firstly, since the number of cliques containing each edge is no longer well-concentrated, we cannot ignore the error terms in the weight functions $ w(\Phi) $; indeed, we expect that the weight of any given semi-copy of a clique-path will depend on local variations in $ G $.
However, we may still hope that the appropriate sums $ \Xi[\mathcal{F}](S, T) $ are concentrated, as these are proportional to the probability of a suitably defined random walk (on an auxiliary hypergraph) ending at $ T $, given that it starts at $ S $.

The more significant difficulty appears to be that we can no longer simply ignore \emph{backtracking} walks.
When each edge is contained in at least $ n^{\eps} $ cliques, we showed in \cref{fact:clique_path_noninj_trivial} that non-injective semi-copies of clique-paths contribute only an $ n^{-\delta} $ fraction to the weighted sum~$\Xi[\mathcal{F}] $, and thus can be ignored.
Imagine choosing a semi-copy of some given clique-path by embedding one clique $ F_i $ at a time, choosing uniformly at random among cliques containing the root set~$ S_{i - 1} $.
Closer to the conjectured threshold, when each edge is contained only in $ \Theta(\log{n}) $ cliques, there is always a $ \Theta(\frac{1}{\log{n}}) $ probability of simply choosing $ F_i = F_{i - 1} $; in other words, such backtracking semi-copies contribute non-negligibly to $ \Xi[\mathcal{F}] $.
This is especially problematic for the following reason.
Suppose $ \mathbf{F} = (F, \mathbf{S}) $ is a $ (0, c) $-centred clique-path and $ \Phi $ is a semi-copy of $ \mathbf{F} $ which backtracks $ c $ times (only a constant number).
Then, in the hypergraph $ \Phi(F) $, it may be the case that the roots are only at distance $ 1 $ apart, meaning that the number of such copies is not well-concentrated.

\section*{AI declaration}

We made use of ChatGPT Pro 5.5 and 5.6 Sol in the process of verifying and editing this paper.
It identified a few minor issues in previous versions of the argument, which resulted in changes to the paper.
We also used it while attempting to find more efficient proofs of several lemmas, as well as to assist in the creation of \Cref{fig:clique_path}.
However, none of the ideas or text present in the final paper are originally due to ChatGPT, including the new arguments introduced to fix the problems which it pointed out.
The authors take full accountability for the text and mathematical content of this paper.

\bibliographystyle{amsplain}
\bibliography{sources}

\end{document}